\newtheorem{theorem}{Theorem}[section]  
\newtheorem{lemma}[theorem]{Lemma}  
\newtheorem{assum}{Assumption}
\newtheorem{prop}{Proposition}[section] 
\theoremstyle{definition}
\newtheorem{definition}{Definition}[section]    
\theoremstyle{remark}
\newtheorem{remark}[theorem]{Remark}    
\DeclareMathOperator{\dom}{dom}
\DeclareMathOperator{\ran}{ran}
\title{A decentralised forward-backward-type algorithm with network-independent heterogenous agent step sizes}
\author[1,2]{Matthew K. Tam\footnote{Email: matthew.tam@unimelb.edu.au}}
\author[1,2]{Liam Timms\footnote{Corresponding author: l.timms@student.unimelb.edu.au}}
\author[1,2]{Lele Zhang\footnote{Email: lele.zhang@unimelb.edu.au}}
\affil[1]{School of Mathematics and Statistics, University of Melbourne, Parkville 3052, Australia}
\affil[2]{ARC Training Centre in Optimisation Technologies, Integrated Methodologies, and Applications (OPTIMA)}
\date{}
\begin{document}

\maketitle

\begin{abstract}
    Consider the problem of finding a zero of a finite sum of maximally monotone operators, where some operators are Lipschitz continuous and the rest are potentially set-valued. We propose a forward-backward-type algorithm for this problem suitable for decentralised implementation. In each iteration, agents evaluate a Lipschitz continuous operator and the resolvent of a potentially set-valued operator, and then communicate with neighbouring agents. Agents choose their step sizes independently using only local information, and the step size upper bound has no dependence on the communication graph. We demonstrate the potential advantages of the proposed algorithm with numerical results for min-max problems and aggregative games.
    
    \paragraph{Keywords:} distributed optimisation, network independent step size, forward-backward algorithm, min-max problems.
    
    \paragraph{MSC2020:} 47N10, 65K15, 47H05, 47J22, 91A10
   
\end{abstract}

\section{Introduction}  \label{sec:intro}
Let $\mathcal{H}$ be a real Hilbert space. Consider a connected graph of $N$ agents coordinating to find a zero of a finite sum of monotone operators given by
\begin{equation}    \label{eqn:finite_sum_inclusion}
    \text{find~} z\in\mathcal{H} \text{~s.t.~} 0\in \sum_{i=1}^N (A_i + B_i) (z),
\end{equation}
where $A_i:\mathcal{H} \rightrightarrows \mathcal{H}$ is maximally monotone (potentially set-valued) and $B_i:\mathcal{H} \to \mathcal{H}$ is monotone and $L_i$-Lipschitz continuous for $i=1,\dots,N$. In this work, we consider a decentralised distributed setting where agent $i$ can evaluate $B_i$ and the resolvent of $A_i$, and agents communicate with each other if connected by an edge in the graph. Problems of this kind arise, for example, when the data is partitioned into subsets and allocated to agents, such as route choice in road networks \citep{paper:paccagnan_nash_wardrop}, binary classification \citep{paper:alghunaim_decentral_proxgrad}, and generative adversarial networks~\cite{paper:beznosikov_saddlepoint}. Of particular importance to this work are \emph{min-max problems} of the form
\begin{equation}    \label{eqn:example_minmax}
    \displaystyle\min_{u\in \mathcal{H}_1} \max_{v\in \mathcal{H}_2} \sum_{i=1}^N f_i(u) + g_i(u, v) - h_i(v),
\end{equation}
where $\mathcal{H}_1$, $\mathcal{H}_2$ are real Hilbert spaces, $f_i:\mathcal{H}_1\to\mathbb{R}\cup\{+\infty\}$ and $h_i:\mathcal{H}_2\to\mathbb{R}\cup\{+\infty\}$ are proper lsc convex functions, and $g_i:\mathcal{H}_1\times\mathcal{H}_2\to\mathbb{R}$ is a convex-concave function with Lipschitz continuous gradient. Assuming~\eqref{eqn:example_minmax} has a saddle-point and the subdifferential sum rule holds for $\sum_{i=1}^N f_i$ and $\sum_{i=1}^N h_i$, solutions of~\eqref{eqn:example_minmax} can be characterised as solutions of~\eqref{eqn:finite_sum_inclusion} with
\begin{equation}   \label{eqn:example_minmax_operators}
z=\begin{pmatrix}u\\v\end{pmatrix}\in\mathcal{H} = \mathcal{H}_1\times\mathcal{H}_2,\quad A_i(z) = \begin{pmatrix} \partial f_i(u) \\ \partial h_i(v)\end{pmatrix},\quad B_i(z) = \begin{pmatrix}\grad_u g_i(u, v) \\ -\grad_v g_i(u,v)\end{pmatrix}.
\end{equation}
This follows from the first-order optimality condition for~\eqref{eqn:example_minmax} and \citep[Eqn.~2.19]{book:rockafeller_saddlepoint}. In this context, the operator $B_i$ is known as the \textit{saddle operator} associated with $g_i$.
 
In this work, we focus on solving~\eqref{eqn:finite_sum_inclusion} using decentralised algorithms of forward-backward-type with fixed step sizes. One such algorithm is \emph{PG-EXTRA}~\citep{paper:algorithm_extra, paper:algorithm_pg_extra}, applicable when $B_1,\dots,B_N$ are cocoercive with constants $1/L_1,\dots,1/L_N$ respectively. Given a \emph{mixing matrix} $W=(w_{ij})\in\mathbb{R}^{N\times N}$ (Definition \ref{defn:mixing_matrix}) and a step size $\alpha < (1 + \lambda_\text{min}(W))/(\max_i\{L_i\})$, where $\lambda_\text{min}(W)$ denotes the smallest eigenvalue of $W$, the main iteration for PG-EXTRA takes the form
\begin{equation}\label{alg:pg_extra_agent}
\left\{\begin{aligned}
    v_i^k &= B_i(x_i^k),\\
    z_i^{k+1} &= z_i^k + \sum_{j=1}^N w_{ij} x_j^{k} - \sum_{j=1}^N \overline{w}_{ij} x_j^{k-1} - \alpha (v_i^k - v_i^{k-1}),\\
    x_i^{k+1} &= J_{\alpha A_i}(z_i^{k+1}),
\end{aligned} \right.\quad\forall i=1,\dots,N,
\end{equation}
where $(\overline{w}_{ij}) = \overline{W} = (I+W)/2$ and $J_{A_i}=(I+A_i)^{-1}$ denotes the \emph{resolvent} of $A_i$. Here the variables $v^k_i$, $x^k_i$, and $z^k_i$ are local variables belonging to agent $i$, and the coefficients $w_{ij}$ are nonzero only if agents $i$ and $j$ are neighbours in the communication graph. Thus, communication between agents occurs only if the agents are neighbours. Denote 
\[
\mathbf{x}^k = (x_1^k, \dots, x^k_N),\quad \mathbf{z}^k = (z_1^k, \dots, z^k_N),\]
\begin{equation}    \label{eqn:product_operators}
A(\mathbf{x})=A_1(x_1)\times\dots\times A_N(x_N),\quad B(\mathbf{x})=(B_1(x_1),\dots, B_N(x_N)),
\end{equation}
\[\mathbf{W} = W\otimes I, \text{~and~} \overline{\mathbf{W}} = \overline{W}\otimes I,\]
where $\otimes$ denotes the \emph{Kronecker product}. Using this notation, \eqref{alg:pg_extra_agent}~can then be represented compactly as
\begin{equation}\label{alg:pg_extra}
\left\{\begin{aligned}
    \mathbf{v}^k &= B(\mathbf{x}^k),    \\
    \mathbf{z}^{k+1} &= \mathbf{z}^k + \mathbf{W} \mathbf{x}^{k} - \overline{\mathbf{W}} \mathbf{x}^{k-1} - \alpha (\mathbf{v}^k - \mathbf{v}^{k-1}),    \\
    \mathbf{x}^{k+1} &= J_{\alpha A}(\mathbf{z}^{k+1}).
\end{aligned}
\right.
\end{equation}

In the context of~\eqref{eqn:finite_sum_inclusion}, PG-EXTRA suffers a number of drawbacks. 
\begin{enumerate}
    \item \emph{The single-valued operators must be cocoercive.} Cocoercivity is often too restrictive an assumption in this context. For example, in~\eqref{eqn:example_minmax_operators} the saddle operator $B_i$ is not cocoercive even for bilinear functions.

    \item \emph{Agents must use homogeneous step sizes}. The step size, $\alpha$, is determined by the smallest cocoercivity constant across agents' single-valued operators. This means agents cannot easily exploit the curvature of their local functions.

    \item \emph{The step size upper bound is graph-dependent}. The upper bound depends on the minimum eigenvalue of $W$, which depends on the communication graph and the method of constructing $W$. Consequently, agents require knowledge beyond their local information \citep{paper:algorithm_nids}.
\end{enumerate}

Various papers in the literature have proposed algorithms to resolve one or more of these issues, although none have resolved all of three in the context of~\eqref{eqn:finite_sum_inclusion}. To resolve the cocoercivity drawback of PG-EXTRA, \citet*{paper:malitsky_tam_minmax} proposed an algorithm that only requires the operators $B_i$ to be $L_i$-Lipschitz continuous for all $i=1,\dots,N$. Given a mixing matrix $W$ and a step size $\alpha<(1+\lambda_\text{min}(W))/(4\max_i\{L_i\})$, the main iteration of \citep[Alg.~1]{paper:malitsky_tam_minmax} takes the form
\begin{subequations} \label{alg:pdtr}
\begin{empheq}[left=\empheqlbrace]{align} 
        \mathbf{v}^k &= 2B(\mathbf{x}^k) - B(\mathbf{x}^{k-1}), \label{alg:pdtr_vk} \\
        \mathbf{z}^{k+1} &= \mathbf{z}^k + \mathbf{W}\mathbf{x}^k - \overline{\mathbf{W}}\mathbf{x}^{k-1} - \alpha (\mathbf{v}^k - \mathbf{v}^{k-1}), \label{alg:pdtr_zk}  \\
        \mathbf{x}^{k+1} &= J_{\alpha A}(\mathbf{z}^{k+1}). \label{alg:pdtr_xk}
\end{empheq}
\end{subequations}
Note that~\eqref{alg:pg_extra} and~\eqref{alg:pdtr} are very similar. Indeed, the only difference is the \textit{extragradient-type} term for the forward operator in~\eqref{alg:pdtr_vk}. This term arises because convergence of~\eqref{alg:pdtr} follows from the \textit{primal-dual-twice-reflected} algorithm, whereas convergence of~\eqref{alg:pg_extra} follows from the \textit{Condat--V\~u} algorithm~\citep[\S2]{paper:malitsky_tam_minmax}. There are algorithms besides~\eqref{alg:pdtr} that can resolve the first drawback of PG-EXTRA (see, for example, \citep{paper:beznosikov_saddlepoint, paper:rogozin_decentral_mirror}). However, the convergence analysis of these follow from the algorithm presented by \citet*{paper:nemirovski_proxmethod}, which is restricted to the setting of~\eqref{eqn:example_minmax}.

To resolve the step size drawbacks of PG-EXTRA when the operators $B_1,\dots,B_N$ are cocoercive with constants $1/L_1,\dots,1/L_N$ respectively, \citet*{paper:algorithm_nids} proposed the \emph{NIDS} algorithm. In this algorithm, agent step sizes are represented as a diagonal matrix $\Lambda=\mathrm{diag}(\alpha_1,\dots,\alpha_N)$, where $\alpha_i$ is the step size for agent $i$. Given a mixing matrix $W$, step sizes satisfying $\alpha_i<2/L_i$, and parameter $\beta\leq1/\max_i\{\alpha_i\}$, denote $\widetilde{\mathbf{W}}=(I-(\beta/2) \Lambda (I-W)) \otimes I$ and $\boldsymbol{\Lambda} = \Lambda\otimes I$. The main iteration for NIDS takes the form
\begin{equation}\label{alg:nids}
\left\{\begin{aligned}
    \mathbf{v}^k &= B(\mathbf{x}^k),    \\
    \mathbf{z}^{k+1} &= \mathbf{z}^k - \mathbf{x}^{k} + \widetilde{\mathbf{W}} \big(2\mathbf{x}^k - \mathbf{x}^{k-1} - \boldsymbol{\Lambda} (\mathbf{v}^k - \mathbf{v}^{k-1}) \big),    \\
    \mathbf{x}^{k+1} &= J_{\boldsymbol{\Lambda} A}(\mathbf{z}^{k+1}).
\end{aligned}
\right.
\end{equation}
To aid comparison between the previous three algorithms, observe that the $\mathbf{z}^{k+1}$ updates in~\eqref{alg:pg_extra},~\eqref{alg:pdtr}, and~\eqref{alg:nids} are very similar. Indeed, the $\mathbf{z}^{k+1}$ updates in \eqref{alg:pg_extra} and~\eqref{alg:pdtr} can be expressed as
\[\mathbf{z}^{k+1} = \mathbf{z}^k - \mathbf{x}^{k} + \overline{\mathbf{W}} \big(2\mathbf{x}^k - \mathbf{x}^{k-1}\big) - \alpha (\mathbf{v}^k - \mathbf{v}^{k-1}),\]
and further, $\widetilde{\mathbf{W}}=\overline{\mathbf{W}}$ when $\alpha_1=\dots=\alpha_N$ and $\beta=1/\max_i\{\alpha_i\}$. Thus, except for the step sizes, the only difference is the information communicated by agents to their neighbours: in~\eqref{alg:pg_extra} and~\eqref{alg:pdtr} this information is the most recent resolvent evaluation, whereas in~\eqref{alg:nids} it is a linear combination of the current and previous resolvent and forward evaluations. The convergence analysis of~\eqref{alg:nids} follows from the \emph{Davis--Yin} algorithm \citep[\S11.3]{paper:davis_yin, book:como_ryu_yin}. There are few algorithms other than~\eqref{alg:nids} that can resolve the second and third drawbacks of PG-EXTRA  (see, for example, \citep{paper:xin_frost, paper:xin_push_pull}), and they only apply in settings more limited than~\eqref{eqn:finite_sum_inclusion}, such as convex minimisation. In this work, we focus on algorithms that are applicable to~\eqref{eqn:finite_sum_inclusion} in the general setting of monotone inclusions, as the first drawback of PG-EXTRA is not a concern in, for example, minimisation problems.

It would be tempting to try resolving the three drawbacks of PG-EXTRA by directly combining the ideas of~\eqref{alg:pdtr} and~\eqref{alg:nids}. A first attempt of this would involve replace the forward evaluation in~\eqref{alg:nids} with an extragradient evaluation as in~\eqref{alg:pdtr}. However, in the context of~\eqref{eqn:finite_sum_inclusion}, the convergence analysis of such an algorithm does not directly follow from the Davis--Yin algorithm, which relies on cocoercivity of the single-valued operators. Instead, we propose the following algorithm (Algorithm~\ref{alg:main_algorithm}). Given a mixing matrix $W$, step sizes $\alpha_1,\dots,\alpha_N$ with $\alpha_i<1/(8L_i)$, and parameter $\beta<\|\Lambda^{1/2}((I-W)/2)\Lambda^{1/2}\|$, the main iteration for our algorithm takes the form
\begin{equation}\label{alg:main_algorithm_iteration}
\left\{\begin{aligned}
        \mathbf{v}^k &= 2B(\mathbf{y}^k) - B(\mathbf{y}^{k-1}),
        \\
        \mathbf{z}^{k+1} &= \mathbf{z}^k - \mathbf{x}^k + \widetilde{\mathbf{W}} \big( 2\mathbf{x}^k - \mathbf{x}^{k-1} - \boldsymbol{\Lambda} (\mathbf{v}^k - \mathbf{v}^{k-1}) \big),
        \\
        \mathbf{x}^{k+1} &= J_{\boldsymbol{\Lambda} A} (\mathbf{z}^{k+1}),
        \\
        \mathbf{y}^{k+1} &= \mathbf{x}^k + \mathbf{z}^{k+1} - \mathbf{z}^k.
 \end{aligned} \right.
 \end{equation}
This algorithm resolves all three drawbacks of PG-EXTRA in the context of~\eqref{eqn:finite_sum_inclusion} without introducing additional assumptions on the operators. We wish to emphasise an important difference between this iteration and~\eqref{alg:pdtr} and~\eqref{alg:nids}: the forward operator is not evaluated at $\mathbf{x}^k$ as with the other methods, but rather it is evaluated at another point $\mathbf{y}^k$.

The remainder of this paper is organised as follows. Section \ref{sec:preliminaries} covers background information on distributed optimisation as well as nonsmooth analysis. Section~\ref{sec:main_algorithm} establishes the convergence of~\eqref{alg:main_algorithm_iteration}, where the analysis is based on the \textit{backward-forward-reflected-backward} algorithm \citep{paper:rieger_tam_noncocoercive}. Section \ref{sec:numerics_part1} provides numerical results that validate the convergence analysis and illustrate the advantages of graph-independent heterogenous agent step sizes. Section \ref{sec:boosted_algorithm} presents an alternative implementation of~\eqref{alg:main_algorithm_iteration} for \emph{aggregative games} which can use fewer variables, and thus less memory, than a na\"ive implementation. Section~\ref{sec:vpp_model} presents an aggregative game for the coordination of a virtual power plant and provides numerical results on the performance of~\eqref{alg:main_algorithm_iteration} and its alternative implementation. Section~\ref{sec:conclusion} summarises the contribution of the paper and discusses future directions.

\section{Notation and preliminaries}   \label{sec:preliminaries}
Throughout this paper, let $\mathcal{H}$ be a real Hilbert space with scalar product $\langle \cdot, \cdot \rangle$ and induced norm $\norm{\cdot}$. The product space $\mathcal{H}\times\dots\times\mathcal{H}$ is denoted as $\mathcal{H}^N$. As a rule of thumb, we use normal letters to denote elements of $\mathcal{H}$ and bold letters for elements of the product space $\mathcal{H}^N$ (that is, $\mathbf{x}=(x_1,\dots,x_N)\in\mathcal{H}^N)$. The \textit{diagonal subspace} of $\mathcal{H}^N$ is the subspace $\mathbb{R}\mathbf{1} = \{\mathbf{z}\in\mathcal{H}^N \,\rvert\, z_1=\dots=z_N\}$. The identity operator on $\mathcal{H}$ is denoted by $I:\mathcal{H}\to\mathcal{H}$.

\subsection{Graphs and mixing matrices} \label{sec:subsec_distributed}
Let $\preceq$ represent the \emph{Loewner order}, $\otimes$ represent the \emph{Kronecker product}, $\lambda_\text{max}(M)$ denote the largest eigenvalue of a real symmetric matrix $M$, $\|\cdot\|_2$ denote the spectral norm, and $\mathbf{1}=(1, \dots, 1)^\top\in\mathbb{R}^N$.

\begin{prop} \label{prop:kronecker}
Given real symmetric matrices $M_1, M_2$, the Kronecker product has the following properties.
\begin{enumerate}[(i)]
    \item $(M_1 M_2)\otimes I = (M_1\otimes I) (M_2\otimes I)$,
    
    \item $M_1\succ0$ (respectively $\succeq0$) if and only if $M_1\otimes I\succ0$ (respectively $\succeq0$), and

    \item The singular values of $M_1$ are the singular values of $M_1\otimes I$, thus $\|M_1\|_2=\|M_1\otimes I\|_2$.
\end{enumerate}
\end{prop}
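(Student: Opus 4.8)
The plan is to prove each of the three Kronecker product properties separately, since they are standard facts about Kronecker products extended to the operator-valued setting. The key observation is that $M_1 \otimes I$ is the operator on $\mathcal{H}^N$ whose block structure has the $(i,j)$ block equal to $(M_1)_{ij} I$.

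First, for property (i), I would verify the mixed-product rule at the level of block matrices. Writing out the $(i,j)$ block of $(M_1 \otimes I)(M_2 \otimes I)$ gives $\sum_k (M_1)_{ik}(M_2)_{kj} I = (M_1 M_2)_{ij} I$, which is exactly the $(i,j)$ block of $(M_1 M_2)\otimes I$. This is a direct computation and should be routine.

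For property (ii), the forward direction ($M_1 \succ 0 \implies M_1 \otimes I \succ 0$) and its converse both rest on relating the quadratic form $\langle (M_1 \otimes I)\mathbf{x}, \mathbf{x}\rangle$ to the quadratic form of $M_1$. I would compute $\langle (M_1\otimes I)\mathbf{x}, \mathbf{x}\rangle = \sum_{i,j} (M_1)_{ij}\langle x_j, x_i\rangle$ and argue positivity. The cleanest route is a diagonalisation argument: since $M_1$ is real symmetric, write $M_1 = Q D Q^\top$ with $Q$ orthogonal and $D$ diagonal; then by property (i), $M_1 \otimes I = (Q\otimes I)(D\otimes I)(Q^\top \otimes I)$, where $Q \otimes I$ is orthogonal on $\mathcal{H}^N$. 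Hence $M_1 \otimes I$ and $D \otimes I$ are orthogonally similar, and $D \otimes I$ is manifestly positive (semi)definite precisely when the diagonal entries of $D$, i.e. the eigenvalues of $M_1$, are positive (nonnegative). The same diagonalisation immediately yields property (iii): the singular values of a real symmetric matrix are the absolute values of its eigenvalues, and since $M_1 \otimes I$ is orthogonally similar to $D \otimes I$ on $\mathcal{H}^N$, its spectrum consists of each eigenvalue of $M_1$ repeated with multiplicity $\dim\mathcal{H}$; taking the supremum of absolute values gives $\|M_1 \otimes I\|_2 = \|M_1\|_2$.

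I expect the main subtlety, rather than a genuine obstacle, to be handling the infinite-dimensional nature of $\mathcal{H}$ carefully: one must confirm that $Q \otimes I$ is a well-defined orthogonal (unitary) operator on $\mathcal{H}^N$ and that the spectral norm is attained or approached in the supremum sense. Once the diagonalisation $M_1 \otimes I = (Q\otimes I)(D\otimes I)(Q\otimes I)^{-1}$ is established via property (i), all three claims reduce to the trivial case of a diagonal matrix $D \otimes I$, for which positivity and the singular value computation are immediate. This makes property (i) the foundational step, with (ii) and (iii) following from the same similarity transformation.
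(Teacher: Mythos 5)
Your proof is correct, but it takes a genuinely different route from the paper: the paper does not prove these facts at all, it simply cites the standard matrix-analysis references (Horn and Johnson, Lemma~4.2.10, Corollary~4.2.13, and Theorem~4.2.15) for (i), (ii), and (iii) respectively. Your approach is a self-contained argument in which (i) is established by a direct block computation, and then (ii) and (iii) both follow from a single diagonalisation: writing $M_1 = Q D Q^\top$ and using (i) to get $M_1\otimes I = (Q\otimes I)(D\otimes I)(Q^\top\otimes I)$ with $Q\otimes I$ unitary on $\mathcal{H}^N$, so that everything reduces to the diagonal operator $D\otimes I$, whose quadratic form $\sum_i d_i\|x_i\|^2$ and norm $\max_i|d_i|$ are transparent. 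What this buys you, beyond unification, is that your argument genuinely covers the setting the paper actually works in: here $I$ is the identity on a possibly infinite-dimensional Hilbert space $\mathcal{H}$, whereas the cited results are stated for Kronecker products of finite matrices, so the paper is implicitly extending them without comment. Your explicit verification that $Q\otimes I$ is unitary and that positivity and the norm can be read off from $D\otimes I$ closes that (minor) gap; the only point worth stating carefully is that since $D$ has finitely many diagonal entries, $\langle (D\otimes I)\mathbf{x},\mathbf{x}\rangle \geq \min_i d_i\,\|\mathbf{x}\|^2$, so the pointwise and uniform notions of positive definiteness coincide here.
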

\begin{proof}
(i)~See, for example, \citep[Lem.~4.2.10]{book:horn_johnson_matrix}.
(ii)~See, for example, \citep[Cor.~4.2.13]{book:horn_johnson_matrix}.
(iii)~See, for example, \citep[Thm.~4.2.15]{book:horn_johnson_matrix}.
\end{proof}

In the context of distributed algorithms like those discussed in Section~\ref{sec:intro}, communication amongst agents is modelled by a connected graph $(\mathcal{V}, \mathcal{E})$, where $\mathcal{V} = \{1, \dots, N\}$ is the set of nodes and $\mathcal{E}$ is the set of edges. Each agent is represented by a node, and communication is represented by edges: two agents $i,j\in \mathcal{V}$ can communicate if $(i,j)\in \mathcal{E}$. For a given node $i\in\mathcal{V}$, $\mathcal{N}(i)$ denotes the set of \emph{neighbours} of node $i$ and $\mathcal{N}^2(i)$ denotes the set of nodes whose distance from node $i$ is two or less, defined by
\[\mathcal{N}(i) = \{i\} \cup \{j\in\mathcal{V} \,\rvert\, (i,j)\in\mathcal{E}\},\quad \mathcal{N}^2(i) = \bigcup_{j\in\mathcal{N}(i)} \mathcal{N}(j).\]
Observe that $i\in\mathcal{N}(j)$ if and only if $j\in\mathcal{N}(i)$. The definitions of the set of neighbours and distance-two neighbours both have $i\in\mathcal{N}(i)$ and $i\in\mathcal{N}^2(i)$ for notational convenience. 

A round of communication can be modelled by agents forming a linear combination of variables communicated by neighbouring agents, for example $\sum_{j=1}^N w_{ij} x^k_j$ in~\eqref{alg:pg_extra_agent}, when the matrix $W=(w_{ij})$ is a \textit{mixing matrix}.
\begin{definition}[Mixing matrix {\citep{paper:algorithm_extra}}]
    \label{defn:mixing_matrix}
    Given a connected graph $(\mathcal{V}, \mathcal{E})$ with $\mathcal{V}=\{1,\dots,N\}$, a matrix $W = (w_{ij})\in\mathbb{R}^{N\times N}$ is a \textit{mixing matrix} if it satisfies the following properties:
    \begin{enumerate}[(i)]
        \item $w_{ij} = 0$ if $i\neq j$ and $(i,j)\notin\mathcal{E}$,
        \item $W = W^\top$,
        \item $\mathrm{ker}(I-W) = \mathbb{R}\mathbf{1}$,
        \item $-I \prec W \preceq I$.
    \end{enumerate}
\end{definition}
\noindent
One method of constructing a mixing matrix is the \textit{Laplacian-based constant edge weight matrix}
\begin{equation}    \label{defn:laplacian_mixing}
    W = I - \frac{1}{\tau} \mathcal{L},
\end{equation}
where $\mathcal{L}$ is the graph Laplacian of $(\mathcal{V}, \mathcal{E})$ and $\tau > (1/2)\lambda_\text{max}(\mathcal{L})$. For a discussion of other choices for the mixing matrix, see \citep{paper:lin_boyd_flda, paper:algorithm_extra}.

In the remainder of this paper, we will abuse notation in the following way: for a real symmetric matrix $M$, we will denote the product $M\otimes I$ by $M$. This notation will not be problematic for results involving the norm and positive (semi-)definiteness of $M$, as Proposition~\ref{prop:kronecker} establishes that the operation $M\mapsto M\otimes I$ preserves singular values and definiteness.

\subsection{Operators}
Set-valued operators are denoted by $A:\mathcal{H}\rightrightarrows\mathcal{H}$ and single-valued operators by $B:\mathcal{H}\to\mathcal{H}$. For an operator $A:\mathcal{H}\rightrightarrows\mathcal{H}$, its \textit{domain} is $\dom A = \{x\in\mathcal{H} \,\rvert\, A(x)\neq\varnothing\}$ and its \textit{range} is $\ran A = A(\mathcal{H})$. The \textit{graph} of $A$ is
\[\mathrm{gra}A = \{(x, x')\in \mathcal{H}\times\mathcal{H} \,\rvert\, x'\in A(x)\},\]
and the \textit{inverse} of $A$ is defined through its graph,
\[\mathrm{gra}(A^{-1}) = \{(x', x)\in \mathcal{H}\times\mathcal{H} \,\rvert\, (x, x')\in \mathrm{gra}A\}.\]
An operator $A:\mathcal{H}\rightrightarrows\mathcal{H}$ is \textit{monotone} if
\[\langle x-y, x' - y'\rangle \geq 0 \quad \forall (x, x'), (y, y')\in\mathrm{gra}A,\]
and a monotone operator $A:\mathcal{H}\rightrightarrows\mathcal{H}$ is \textit{maximally monotone} if there does not exist a monotone operator $A':\mathcal{H}\rightrightarrows\mathcal{H}$ such that $\mathrm{gra}A \subset \mathrm{gra}A'$. Given a maximally monotone operator $A:\mathcal{H}\rightrightarrows\mathcal{H}$, its \textit{resolvent}, $J_A = (I+A)^{-1}$, is a single-valued operator with $\dom J_A = \mathcal{H}$ \citep{paper:minty_monotone}. An operator $B:\mathcal{H}\to\mathcal{H}$ is $L$-\textit{Lipschitz continuous} if $L\geq0$ and
\[\|B(x) - B(y)\| \leq L\norm{x-y} \quad \forall x,y\in\mathcal{H},\]
and $\beta$-\textit{cocoercive} if $\beta>0$ and
\[\langle x-y, B(x) - B(y) \rangle \geq \beta \|B(x)-B(y)\|^2 \quad \forall x,y\in\mathcal{H}.\]
Cocoercive operators are Lipschitz continuous, following from the Cauchy--Schwarz inequality. However, the converse is not always true. For example, skew-symmetric operators are Lipschitz continuous and never cocoercive \citep{paper:malitsky_tam_forb}. Given a differentiable convex function $f:\mathcal{H}\to\mathbb{R}$, the operator $B=\grad f:\mathcal{H}\to\mathcal{H}$ is $1/L$-cocoercive if and only if it is $L$-Lipschitz continuous \citep{paper:baillon_haddad}. If, however, an operator $B:\mathcal{H}\to\mathcal{H}$ is not the gradient of a convex function, then cocoercivity can only be guaranteed with additional restrictive assumptions such as strong monotonicity \citep[Remark 5]{paper:belgioioso_ev_charge}.

\subsection{Convex functions}
For a function $f:\mathcal{H}\to\mathbb{R}\cup\{\pm\infty\}$, its \textit{domain} is $\dom f = \{x\in\mathcal{H} \,\rvert\, f(x) < +\infty\}$ and it is \textit{proper} if $\dom f\neq\varnothing$ and $f(x)>-\infty$ for all $x\in\dom f$. Given a proper lsc convex function $f:\mathcal{H}\to\mathbb{R}\cup\{+\infty\}$, its (convex) \textit{subdifferential} at a point $x\in\mathcal{H}$ is the set-valued operator
\[
\partial f(x) = 
\begin{cases}
\{u\in\mathcal{H} \,\rvert\, \langle y-x, u \rangle + f(x) - f(y)\leq0\,, \forall y\in \mathcal{H}\}, & x\in\dom f,  \\
\varnothing, & x\notin\dom f.
\end{cases}\]
The operator $\partial f:\mathcal{H}\rightrightarrows\mathcal{H}$ is maximally monotone \citep{paper:rockafellar_subdifferential}, and if a function $f:\mathcal{H}\to\mathbb{R}$ is differentiable then $\partial f(x)=\{\grad f(x)\}$. Given a nonempty closed convex set $C\subseteq\mathcal{H}$, the \textit{indicator function} of $C$ is the proper lsc convex function $\iota_C:\mathcal{H}\to\mathbb{R}\cup\{+\infty\}$ defined by 
\[\iota_C(x) = \begin{cases} 0 & x\in C, \\ +\infty & x\notin C. \end{cases}\]
The \textit{normal cone} of $C$, $N_C:\mathcal{H}\rightrightarrows\mathcal{H}$, is the subdifferential of $\iota_C$.

\subsection{Properties of monotone operators and inclusions}
We now state some general results on monotone operators and monotone inclusions that will be useful in the convergence analysis of Algorithm~\ref{alg:main_algorithm}, detailed in Section \ref{sec:main_algorithm}. The first result establishes a sufficient condition for positive definiteness of a certain operator.

\begin{lemma}   \label{lem:M_positive_def}
    Let $S,T:\mathcal{H}\to\mathcal{H}$ be self-adjoint operators. If $c > \|T S^2 T\|$, then $c I - S T^2 S \succ 0$.
\end{lemma}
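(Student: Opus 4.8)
The plan is to recognise the two products $ST^2S$ and $TS^2T$ as $A^*A$ and $AA^*$ for a single operator $A$, and then to exploit the standard identity $\norm{A^*A}=\norm{AA^*}$ together with the positive semidefiniteness of these operators. Concretely, I would set $A=TS$. Since $S$ and $T$ are self-adjoint, $A^*=(TS)^*=S^*T^*=ST$, and therefore
\[
A^*A = ST\,TS = ST^2S, \qquad AA^* = TS\,ST = TS^2T.
\]
This factorisation is the crux of the argument: once it is spotted, the two seemingly different products are exposed as $A^*A$ and $AA^*$.

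Next I would record that both operators are self-adjoint and positive semidefinite. Indeed, for every $x\in\mathcal{H}$,
\[
\langle x, ST^2Sx\rangle = \langle Sx, T^2 Sx\rangle = \norm{TSx}^2 \ge 0,
\]
and similarly $\langle x, TS^2Tx\rangle = \norm{STx}^2 \ge 0$, using that $S$ and $T$ are self-adjoint. I would then invoke the fact that for any bounded operator one has $\norm{A^*A}=\norm{A}^2=\norm{A^*}^2=\norm{AA^*}$ (the $C^*$-identity, equivalently that the nonzero spectra of $A^*A$ and $AA^*$ coincide). Applying this to $A=TS$ gives $\norm{ST^2S}=\norm{TS^2T}$, which identifies the quantity appearing in the hypothesis with the norm of the operator we wish to bound.

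Finally, since $ST^2S$ is a positive self-adjoint operator, its operator norm equals the supremum of its spectrum, so $\langle x, ST^2Sx\rangle \le \norm{ST^2S}\,\norm{x}^2 = \norm{TS^2T}\,\norm{x}^2$ for all $x$. Hence
\[
\langle x, (cI - ST^2S)x\rangle \;\ge\; \bigl(c - \norm{TS^2T}\bigr)\norm{x}^2,
\]
and because $c>\norm{TS^2T}$ the right-hand side is strictly positive for every $x\neq 0$; in fact $cI - ST^2S \succeq (c-\norm{TS^2T})I \succ 0$, giving the uniform positivity. The only nonroutine ingredient is the equality $\norm{A^*A}=\norm{AA^*}$, and the main obstacle is simply recognising the factorisation $A=TS$ that makes it applicable — every remaining step is a direct verification.
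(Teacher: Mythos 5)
Your proof is correct and follows essentially the same route as the paper's: both hinge on the $C^*$-identity applied to $A=TS$ (the paper writes it as $\|TS^2T\|=\|TS\|^2$, you as $\|ST^2S\|=\|TS^2T\|$, which is the same fact) combined with the observation that $\langle x, ST^2Sx\rangle=\|TSx\|^2$. Your final remark that $cI-ST^2S\succeq (c-\|TS^2T\|)I$ is a slightly stronger (uniform) conclusion than the paper states, but the argument is otherwise identical.
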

\begin{proof}
    Suppose $c > \|T S^2 T\| = \|T S\|^2,$ where equality follows from $S$ and $T$ being self-adjoint. Using the definition of the operator norm, we then have
    \[c\|x\|^2 > \|TS\|^2\|x\|^2 \geq \|T Sx\|^2 \iff \big\langle \big(c I - S T^2 S\big) x, x\big\rangle >0, \quad \forall x\in\mathcal{H}\setminus\{0\},\]
    which establishes positive definiteness.
\end{proof}

The second result establishes a sufficient condition for the operators in~\eqref{eqn:product_operators} defined on the product space to be maximally monotone, as well as a specific case where operator composition preserves maximal monotonicity and Lipschitz continuity.
\begin{lemma} \label{lem:diagonal_composition}
    Let $T_1, \dots, T_N:\mathcal{H}\rightrightarrows\mathcal{H}$ be operators and $D=\mathrm{diag}(d_1, \dots, d_N) \otimes I$  with $d_1, \dots, d_N > 0$. Define the operator $T:\mathcal{H}^N\rightrightarrows\mathcal{H}^N$ by
    \[T(\mathbf{x}) = T_1 (x_1) \times \dots \times T_N(x_N).\]
    Then the following statements hold.
    \begin{enumerate}[(i)]
        \item \label{lem:diagonal_composition1} If $T_1, \dots, T_N:\mathcal{H}\rightrightarrows\mathcal{H}$ are maximally monotone, then $DTD$ is maximally monotone.

        \item \label{lem:diagonal_composition2} If $T_i:\mathcal{H}\to\mathcal{H}$ is $L_i$-Lipschitz continuous for all $i=1,\dots,N$, then $DTD$ is $\max_i\{d_i^2L_i\}$-Lipschitz continuous.
    \end{enumerate}
     
\end{lemma}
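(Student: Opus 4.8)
The plan is to exploit the block-diagonal structure of $D$ so that $DTD$ again factors as a Cartesian product of single-block operators, reducing everything to one block at a time. Since $D$ acts as the diagonal scaling $\mathbf{x}\mapsto(d_1x_1,\dots,d_Nx_N)$ and is self-adjoint and invertible, writing out the composition gives
\[
(DTD)(\mathbf{x}) = \big(d_1\,T_1(d_1x_1)\big)\times\dots\times\big(d_N\,T_N(d_Nx_N)\big),
\]
so that $DTD$ is itself a product operator whose $i$-th factor is $S_i:=d_i\,T_i(d_i\,\cdot\,)$. Both claims then follow once the corresponding property is established for each $S_i$ together with the fact that the property in question is preserved under Cartesian products.

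For part~(\ref{lem:diagonal_composition1}), I would first verify that each $S_i$ is maximally monotone and then invoke the standard fact that a Cartesian product of maximally monotone operators is maximally monotone. Monotonicity of $S_i$ is immediate: for $u'\in T_i(d_ix)$ and $v'\in T_i(d_iy)$ one has $\langle x-y,\,d_iu'-d_iv'\rangle=\langle d_ix-d_iy,\,u'-v'\rangle\geq0$ by monotonicity of $T_i$. For maximality I would apply Minty's surjectivity criterion \citep{paper:minty_monotone}: given $w\in\mathcal{H}$, the substitution $u=d_ix$ turns the inclusion $w\in(I+S_i)(x)$ into $d_iw\in(I+d_i^2T_i)(u)$, which is solved by $u=J_{d_i^2T_i}(d_iw)$ since the resolvent of the maximally monotone operator $d_i^2T_i$ has full domain; then $x=u/d_i$ shows $\ran(I+S_i)=\mathcal{H}$. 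Alternatively, since $D=D^\ast$ is invertible, one may first note that $T$ is maximally monotone as a product and then appeal to invariance of maximal monotonicity under the congruence $T\mapsto D^\ast TD=DTD$.

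For part~(\ref{lem:diagonal_composition2}) the operators are single-valued, and a direct factor-by-factor computation on the product-space norm delivers the sharp constant. Each $S_i$ is $d_i^2L_i$-Lipschitz, because $\|d_iT_i(d_ix)-d_iT_i(d_iy)\|=d_i\|T_i(d_ix)-T_i(d_iy)\|\leq d_iL_i\,d_i\|x-y\|=d_i^2L_i\|x-y\|$; summing the squared block increments gives
\[
\|(DTD)(\mathbf{x})-(DTD)(\mathbf{y})\|^2=\sum_{i=1}^N\|S_i(x_i)-S_i(y_i)\|^2\leq\big(\max_i d_i^2L_i\big)^2\sum_{i=1}^N\|x_i-y_i\|^2,
\]
and taking square roots yields the Lipschitz constant $\max_i\{d_i^2L_i\}$. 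It is worth carrying out this estimate blockwise rather than bounding $DTD$ crudely as a composition $D\circ T\circ D$, since the latter would only give the weaker constant $(\max_id_i)^2\max_iL_i$.

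The routine parts are the monotonicity inequality and the Lipschitz estimate; the one genuinely non-pointwise step is the maximality claim in part~(\ref{lem:diagonal_composition1}), since maximality cannot be read off from a local inequality and must be obtained either from Minty's surjectivity theorem applied to each block or from the congruence-invariance of maximal monotonicity for the assembled product operator.
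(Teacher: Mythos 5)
Your proof is correct, and it splits into two halves: part~(ii) is essentially identical to the paper's argument (the same blockwise factorisation $\|(DTD)(\mathbf{x})-(DTD)(\mathbf{y})\|^2=\sum_{i}\|d_iT_i(d_ix_i)-d_iT_i(d_iy_i)\|^2$ followed by the same bound and the same constant $\max_i\{d_i^2L_i\}$), while your primary route for part~(i) is genuinely different from the paper's. The paper argues top-down: it first cites the standard fact that the product $T$ of maximally monotone operators is maximally monotone \citep[Prop.~20.23]{book:camo_bauschke_combettes}, and then invokes a congruence result \citep[Cor.~25.6]{book:camo_bauschke_combettes} stating that $DTD$ is maximally monotone because $D$ is self-adjoint with $\ran D=\mathcal{H}^N$ --- exactly the ``alternatively'' route you sketch in one sentence at the end of your treatment of~(i). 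Your main argument instead works bottom-up: you factor $DTD$ as the product of the blocks $S_i=d_iT_i(d_i\,\cdot\,)$, prove monotonicity of each block by a change of variables in the inner product, prove maximality of each block via Minty's surjectivity criterion (reducing $w\in(I+S_i)(x)$ to $d_iw\in(I+d_i^2T_i)(u)$ with $u=d_ix$, which the resolvent $J_{d_i^2T_i}$ solves since $d_i^2T_i$ is maximally monotone), and only then assemble via the product fact. What your route buys is self-containedness: it replaces the congruence corollary --- a comparatively heavy citation --- with Minty's theorem, which the paper already relies on elsewhere, and it exposes why positivity of the $d_i$ matters (it makes the substitution $u=d_ix$ invertible and keeps $d_i^2T_i$ maximally monotone). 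What the paper's route buys is brevity and the fact that it never needs $D$ to be diagonal at all, only self-adjoint and surjective, so it generalises beyond diagonal scalings. Your closing remark on part~(ii) is also apt: estimating $D\circ T\circ D$ by operator norms would only give $(\max_id_i)^2\max_iL_i$, which is weaker than the blockwise constant $\max_i\{d_i^2L_i\}$ that the convergence analysis (via $L=\max_i\{\alpha_iL_i\}$ in Theorem~\ref{thm:main_algorithm}) actually needs.
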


\begin{proof}
    \ref{lem:diagonal_composition1} Suppose $T_1, \dots, T_N$ are maximally monotone. Then $T$ is maximally monotone \citep[Prop.~20.23]{book:camo_bauschke_combettes}. Since $D$ is self-adjoint and $\ran D = \mathcal{H}^N$, it follows that $D T D$ is maximally monotone \citep[Cor.~ 25.6]{book:camo_bauschke_combettes}. 
    
    \ref{lem:diagonal_composition2} Suppose that $T_i$ is $L_i$-Lipschitz continuous for all $i=1,\dots,N$. Then, for all $\mathbf{x} = (x_1, \dots, x_N)$ and $\mathbf{x}' = (x_1', \dots, x_N')\in\mathcal{H}^N$, we have
    \begin{align*}
    \|(D T D)(\mathbf{x}) -(D T D)(\mathbf{x}')\|^2 = \sum_{i=1}^N \|(d_i T_i)(d_i x_i) - (d_i T_i)(d_i x_i')\|^2 & \leq \sum_{i=1}^N d_i^2 L_i^2 \|d_i x_i - d_ix_i'\|^2 \\
    & \leq \max_i \{d_i^4 L_i^2\} \|\mathbf{x} - \mathbf{x}'\|^2.
    \end{align*}
    Thus, $D T D$ is $\max_i\{d_i^2L_i\}$-Lipschitz continuous,
    establishing the result.
\end{proof}

The third result is an extension of \citep[Prop.~4.1,]{paper:malitsky_tam_minmax}, where an inclusion in $\mathcal{H}$ of the form~\eqref{eqn:finite_sum_inclusion} is reformulated as an inclusion in the product space $\mathcal{H}^N$.

\begin{prop}  \label{prop:product_space}
    Let $T_1,\dots, T_N:\mathcal{H}\rightrightarrows\mathcal{H}$ be operators, $S:\mathcal{H}^N\to\mathcal{H}^N$ be an invertible self-adjoint operator, and $K:\mathcal{H}^N\to\mathcal{H}^N$ be a bounded linear operator with $\mathrm{ker}K=\mathbb{R}\mathbf{1}$. Define the operator $T:\mathcal{H}^N\rightrightarrows\mathcal{H}^N$ by
    \[T(\mathbf{x}) = T_1 (x_1) \times \dots \times T_N(x_N).\]
    Then $\mathbf{y}\in\mathcal{H}^N$ solves
    \begin{equation}    \label{eqn:proof_product_space1}
        0 \in ST(S \mathbf{y}) + (K S)^* N_{\{0\}}(K S\mathbf{y}) \subset \mathcal{H}^N,
    \end{equation}
    if and only if $\mathbf{y} = S^{-1}(z, \dots, z) \in\mathcal{H}^N$ for some $z\in\mathcal{H}$ and $z$ solves
    \begin{equation}    \label{eqn:proof_product_space2}
        0 \in\sum_{i=1}^N T_i(z) \subset \mathcal{H}.
    \end{equation}
\end{prop}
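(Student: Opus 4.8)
The plan is to exploit the degeneracy of the normal cone of a singleton: since $\{0\}$ has only one point, $N_{\{0\}}(\mathbf{w}) = \mathcal{H}^N$ when $\mathbf{w}=0$ and $N_{\{0\}}(\mathbf{w}) = \varnothing$ otherwise. Hence the term $(KS)^* N_{\{0\}}(KS\mathbf{y})$ is nonempty if and only if $KS\mathbf{y}=0$, in which case it equals the subspace $\ran((KS)^*)$. So my first step, valid for either direction, is to record that any $\mathbf{y}$ satisfying~\eqref{eqn:proof_product_space1} must have $KS\mathbf{y}=0$; since $\ker K = \mathbb{R}\mathbf{1}$ and $S$ is invertible, this is equivalent to $S\mathbf{y}\in\mathbb{R}\mathbf{1}$, i.e. $S\mathbf{y}=(z,\dots,z)$ and $\mathbf{y}=S^{-1}(z,\dots,z)$ for some $z\in\mathcal{H}$. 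This already yields the claimed form of $\mathbf{y}$.

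Second, I would rewrite the surviving inclusion. With $KS\mathbf{y}=0$ we have $N_{\{0\}}(KS\mathbf{y})=\mathcal{H}^N$, so~\eqref{eqn:proof_product_space1} reads $0\in ST(S\mathbf{y})+\ran((KS)^*)$. Using $(KS)^*=S^*K^*=SK^*$ (as $S$ is self-adjoint) together with invertibility of $S$, applying $S^{-1}$ shows this is equivalent to the existence of some $\mathbf{u}=(u_1,\dots,u_N)\in T(S\mathbf{y})=T_1(z)\times\dots\times T_N(z)$ with $-\mathbf{u}\in\ran(K^*)$; since $\ran(K^*)$ is a subspace, this is the same as $\mathbf{u}\in\ran(K^*)$. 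Thus~\eqref{eqn:proof_product_space1} holds precisely when there exist $u_i\in T_i(z)$ whose tuple lies in $\ran(K^*)$.

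Third, the crux is to identify $\ran(K^*)$. I would invoke the adjoint-range identity $\ran(K^*)=(\ker K)^\perp$ and compute $(\ker K)^\perp=(\mathbb{R}\mathbf{1})^\perp=\{\mathbf{u}\in\mathcal{H}^N : \sum_{i=1}^N u_i=0\}$, the last equality because $\langle\mathbf{u},(z,\dots,z)\rangle=\langle\sum_i u_i,z\rangle$ vanishes for all $z$ iff $\sum_i u_i=0$. Substituting, the condition ``there exist $u_i\in T_i(z)$ with tuple in $\ran(K^*)$'' becomes ``there exist $u_i\in T_i(z)$ with $\sum_i u_i=0$'', which is exactly $0\in\sum_{i=1}^N T_i(z)$, i.e.~\eqref{eqn:proof_product_space2}. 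Reading the chain of equivalences in both directions then completes the proof.

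The main obstacle is the identity $\ran(K^*)=(\ker K)^\perp$: in a general Hilbert space only $\overline{\ran(K^*)}=(\ker K)^\perp$ holds, so the ``if'' direction, which needs $(\ker K)^\perp\subseteq\ran(K^*)$, requires $K$ to have closed range. This is where the standing abuse of notation $K=\hat K\otimes I$ is essential: for a matrix $\hat K$ one has $\ran(\hat K^\top)=(\ker\hat K)^\perp$ in the finite-dimensional factor, and tensoring with $I$ preserves closedness, since $\ran(\hat K^\top\otimes I)=\ran(\hat K^\top)\otimes\mathcal{H}$ is the image of the bounded idempotent $P\otimes I$ (with $P$ the orthogonal projection onto $\ran(\hat K^\top)$), hence closed. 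The ``only if'' direction uses only the unconditional inclusion $\ran(K^*)\subseteq(\ker K)^\perp$. The remaining bookkeeping — commuting the bijection $S$ past sums, intersections, and ranges — is routine given that $S$ is invertible and self-adjoint.
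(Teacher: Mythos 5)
Your proof is correct, but it follows a genuinely different route from the paper's. The paper's proof is a two-step reduction: using $(KS)^*=SK^*$ and invertibility of $S$, inclusion~\eqref{eqn:proof_product_space1} is equivalent to $0\in T(\mathbf{z})+K^*N_{\{0\}}(K\mathbf{z})$ with $\mathbf{z}=S\mathbf{y}$, and the remaining equivalence (consensus of the components of $\mathbf{z}$ together with~\eqref{eqn:proof_product_space2}) is delegated entirely to the citation \citep[Prop.~4.1]{paper:malitsky_tam_minmax}. You perform the same change of variables implicitly, but then prove the cited step from first principles: the degeneracy of $N_{\{0\}}$ forces $S\mathbf{y}\in\ker K=\mathbb{R}\mathbf{1}$, the surviving inclusion asks for a selection $\mathbf{u}\in T_1(z)\times\dots\times T_N(z)$ lying in $\ran(K^*)$, and the identification $\ran(K^*)=(\ker K)^\perp=\{\mathbf{u}\in\mathcal{H}^N \mid \sum_{i=1}^N u_i=0\}$ turns this into~\eqref{eqn:proof_product_space2}. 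What your route buys is precision about hypotheses: you correctly flag that $\ran(K^*)=(\ker K)^\perp$ requires $K$ to have closed range, and this is not pedantry --- for a general bounded linear $K$ with $\ker K=\mathbb{R}\mathbf{1}$ the proposition is in fact false as literally stated. (Take $N=2$, $S=I$, $K(x_1,x_2)=(D(x_1-x_2),0)$ with $D$ injective, positive and compact, and constant operators $T_1\equiv\{c\}$, $T_2\equiv\{-c\}$ with $c\in\overline{\ran D}\setminus\ran D$; then $0\in T_1(z)+T_2(z)$ for every $z$, yet $(c,-c)\notin\ran(K^*)$, so~\eqref{eqn:proof_product_space1} has no solution.) Your closed-range argument via the tensor structure $K=\hat{K}\otimes I$, whose range is the range of an orthogonal projection and hence closed, is exactly what is needed, and it covers every application of the proposition in the paper, where $K=((I-W)/2)^{1/2}$ comes from a symmetric matrix; the paper sidesteps the issue only because the result it cites is itself established in that matrix setting. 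In short, the paper's proof is shorter by outsourcing the key equivalence, while yours is self-contained and exposes the closed-range hypothesis that the general statement actually requires.
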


\begin{proof}
    Since $S$ is invertible and self-adjoint, $\mathbf{y}\in\mathcal{H}^N$ solves~\eqref{eqn:proof_product_space1} if and only if $\mathbf{z} = S\mathbf{y}\in\mathcal{H}^N$ solves
    \begin{align}   \label{eqn:proof_product_space_prop4.1}
    0 \in T(\mathbf{z}) + K^* N_{\{0\}}(K \mathbf{z}).
    \end{align}
    Applying \citep[Prop.~4.1]{paper:malitsky_tam_minmax} to~\eqref{eqn:proof_product_space_prop4.1}, we have that $\mathbf{z}=(z_1,\dots,z_N)$ solves~\eqref{eqn:proof_product_space_prop4.1} if and only if $z_1=\dots=z_N\in\mathcal{H}$ and $z=z_1$ solves~\eqref{eqn:proof_product_space2}, completing the proof.
\end{proof}
The fourth result extends \citep[\S3.5.1]{book:como_ryu_yin} to real Hilbert spaces, establishing that auxiliary variables can be included in a monotone inclusions without changing the solution set.
\begin{prop}    \label{prop:augmented_inclusion}
    Let $\mathcal{H}_1, \mathcal{H}_2$ be real Hilbert spaces, $T:\mathcal{H}_1 \rightrightarrows \mathcal{H}_2$, $S:\mathcal{H}_2\rightrightarrows\mathcal{H}_2$ be operators, and $P:\mathcal{H}_1 \to \mathcal{H}_2$, $Q: \mathcal{H}_2 \to \mathcal{H}_2$ be bounded linear operators. Then $z \in \mathcal{H}_1$ solves 
    \begin{equation}    \label{eqn:proof_base_inclusion}
        0 \in Tz + P^* S(Pz)
    \end{equation}
    if and only if there exists $\tilde z \in \mathcal{H}_2$ such that $(z, \tilde z) \in\mathcal{H}_1\times\mathcal{H}_2$ solves
    \begin{subequations} \label{eqn:proof_augment_inclusion}
    \begin{empheq}[left=\empheqlbrace]{align}
        0 & \in Tz + P^* S \left(Pz + Q\tilde z \right),\label{eqn:proof_augment_inclusion_a}
        \\
        0 & \in N_{\{0\}} (\tilde z) + Q^* S \left(Pz + Q\tilde z\right). \label{eqn:proof_augment_inclusion_b}
    \end{empheq}
    \end{subequations}
\end{prop}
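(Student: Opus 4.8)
The plan is to exploit the degenerate structure of the normal cone $N_{\{0\}}$, which reduces the second augmented inclusion to a constraint that pins down $\tilde z$, after which the first augmented inclusion collapses onto the original one. First I would record the key fact that, by the definition of the normal cone as the subdifferential of the indicator $\iota_{\{0\}}$, we have $N_{\{0\}}(\tilde z) = \mathcal{H}_2$ when $\tilde z = 0$ and $N_{\{0\}}(\tilde z) = \varnothing$ otherwise: at the single point of its domain the subdifferential is the whole space, and off the domain it is empty. This observation is what drives the entire equivalence, and notably renders the specific choice of the bounded linear operator $Q$ irrelevant, since the forced value $\tilde z = 0$ always kills the term $Q\tilde z$.

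For the forward direction, I would suppose $z$ solves \eqref{eqn:proof_base_inclusion} and take $\tilde z = 0$. Then $Pz + Q\tilde z = Pz$, so \eqref{eqn:proof_augment_inclusion_a} coincides with \eqref{eqn:proof_base_inclusion} and holds by hypothesis. To verify \eqref{eqn:proof_augment_inclusion_b}, I would note that \eqref{eqn:proof_base_inclusion} forces $S(Pz)\neq\varnothing$ (otherwise $P^*S(Pz)$ would be empty and the inclusion could not hold), hence $Q^*S(Pz)$ is nonempty; since $N_{\{0\}}(0) = \mathcal{H}_2$, the Minkowski sum $N_{\{0\}}(0) + Q^*S(Pz)$ equals $\mathcal{H}_2$ and in particular contains $0$. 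Thus $(z,0)$ solves the augmented system.

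For the converse, I would suppose $(z,\tilde z)$ solves \eqref{eqn:proof_augment_inclusion}. Inclusion \eqref{eqn:proof_augment_inclusion_b} requires $N_{\{0\}}(\tilde z)$ to be nonempty, which by the recorded property forces $\tilde z = 0$. Substituting $\tilde z = 0$ into \eqref{eqn:proof_augment_inclusion_a} then yields $0 \in Tz + P^*S(Pz)$, so $z$ solves \eqref{eqn:proof_base_inclusion}, completing the equivalence.

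The argument is essentially bookkeeping once the behaviour of $N_{\{0\}}$ is isolated, so I do not expect a substantive obstacle. The only point demanding care is the nonemptiness check in the forward direction: one must confirm that \eqref{eqn:proof_augment_inclusion_b} is genuinely satisfiable rather than vacuously demanding a sum involving an empty set, which is exactly why the solvability of \eqref{eqn:proof_base_inclusion} must be invoked to guarantee $S(Pz)\neq\varnothing$. Keeping the domains of the set-valued operators nonempty throughout, so that each Minkowski sum appearing in the inclusions is well defined, is the sole subtlety.
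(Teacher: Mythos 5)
Your proposal is correct and follows essentially the same argument as the paper's proof: both directions hinge on the fact that $N_{\{0\}}(\tilde z)$ equals $\mathcal{H}_2$ at $\tilde z = 0$ and is empty otherwise, with the forward direction setting $\tilde z = 0$ and using solvability of \eqref{eqn:proof_base_inclusion} to guarantee $S(Pz)\neq\varnothing$ so that the Minkowski sum in \eqref{eqn:proof_augment_inclusion_b} is nonempty, and the converse using nonemptiness of $N_{\{0\}}(\tilde z)$ to force $\tilde z = 0$. Your explicit remark that the choice of $Q$ is immaterial is a nice observation not emphasised in the paper, but the underlying reasoning is identical.
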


\begin{proof}
    Suppose that $z$ solves~\eqref{eqn:proof_base_inclusion} and set $\tilde z = 0$. Since $Q\tilde z = 0$ by linearity,~\eqref{eqn:proof_base_inclusion} implies that $(z, \tilde z)$ solves

    \[0 \in Tz + P^*S(Pz) = Tz + P^*S(Pz + Q\tilde z),\]
    that is, inclusion~\eqref{eqn:proof_augment_inclusion_a} holds. Additionally,~\eqref{eqn:proof_base_inclusion} implies $P^*S(Pz) \neq \varnothing$ ,which implies $S(Pz) \neq \varnothing$, which implies $Q^* S \left( Pz \right) = Q^* S \left( Pz + Q \tilde z\right) \neq \varnothing$. Since $N_{\{0\}}(\tilde{z})=N_{\{0\}} (0) = \mathcal{H}_2$, we also have 
    \[0 \in N_{\{0\}} (\tilde z) = N_{\{0\}} (\tilde z) + Q^* S \left( Pz + Q \tilde z \right),\]
    which establishes~\eqref{eqn:proof_augment_inclusion_b}.
    
    Conversely, suppose that $(z, \tilde z)$ solves~\eqref{eqn:proof_augment_inclusion}. Then $N_{\{0\}} (\tilde z)\neq\varnothing$ due to~\eqref{eqn:proof_augment_inclusion_b}, and hence $\tilde z = 0$. Then $Q\tilde z = 0$ by linearity, and so~\eqref{eqn:proof_augment_inclusion_a} implies that
   \[0 \in Tz + P^* S \left(Pz + 0 \right) = Tz + P^* S (Pz).\]
   Then $z$ solves~\eqref{eqn:proof_base_inclusion}, completing the proof.
\end{proof}
The final result decomposes the resolvent of a composition of operators into an equation and an inclusion that does not involve a composition. This generalises the result in \citep[Eqn.~2.6]{book:como_ryu_yin}, which considers the finite dimensional setting where one of the operators is a subdifferential.
\begin{prop}   \label{prop:resolvent_decompose}
    Let $\mathcal{H}_1$ and $\mathcal{H}_2$ be real Hilbert spaces, $S:\mathcal{H}_2\rightrightarrows\mathcal{H}_2$ be a maximally monotone operator, and $P:\mathcal{H}_1 \times \mathcal{H}_2 \to \mathcal{H}_2$ be a bounded linear operator. Define the operator $T: \mathcal{H}_1 \times \mathcal{H}_2 \rightrightarrows \mathcal{H}_1 \times \mathcal{H}_2$ by $T(t) = P^* S (P t)$, and suppose that $T$ is maximally monotone. Then, for all $r \in \mathcal{H}_1 \times \mathcal{H}_2$, $t \in \mathcal{H}_1 \times \mathcal{H}_2$ and $\alpha>0$ we have

    \begin{equation}    \label{eqn:proof_decompose_resolvent}
    r = J_{\alpha T}(t)
    \end{equation}
    if and only if there exists $u\in\mathcal{H}_2$ such that
   \begin{subequations}    
    \begin{empheq}[left=\empheqlbrace]{align}
            r & = t - \alpha P^*u,\label{eqn:proof_decompose_equality1}
            \\
            S^{-1}(u) & \ni Pt - \alpha PP^*u. \label{eqn:proof_decompose_equality2}
    \end{empheq}
    \end{subequations}
    
\end{prop}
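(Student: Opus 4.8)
The plan is to unwind the resolvent identity into a monotone inclusion and then ``peel off'' the outer linear maps $P^*$ and $P$ by introducing an auxiliary variable $u\in\mathcal{H}_2$ that witnesses membership in the image of $S$. Since $T$ is assumed maximally monotone, the resolvent $J_{\alpha T}=(I+\alpha T)^{-1}$ is single-valued with full domain, so for any $t$ the statement $r=J_{\alpha T}(t)$ is equivalent to the inclusion $t\in r+\alpha T(r)$, that is,
\[
t-r \in \alpha P^* S(Pr).
\]
The whole proof then reduces to showing that this membership holds if and only if there exists $u\in\mathcal{H}_2$ realising~\eqref{eqn:proof_decompose_equality1} and~\eqref{eqn:proof_decompose_equality2}.

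For the forward direction, I would start from $t-r\in\alpha P^* S(Pr)$. By definition of the composition $\mathbf{t}\mapsto P^* S(P\mathbf{t})$, this means precisely that there exists $u\in S(Pr)$ with $t-r=\alpha P^*u$; rearranging gives~\eqref{eqn:proof_decompose_equality1}, namely $r=t-\alpha P^*u$. It then remains to rewrite the membership $u\in S(Pr)$. Using the definition of the inverse operator, $u\in S(Pr)$ is equivalent to $Pr\in S^{-1}(u)$, and substituting $r=t-\alpha P^*u$ yields $Pr=Pt-\alpha PP^*u$, which is exactly~\eqref{eqn:proof_decompose_equality2}.

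For the converse, I would simply reverse these steps, each of which is an equivalence. Given $u$ satisfying~\eqref{eqn:proof_decompose_equality1} and~\eqref{eqn:proof_decompose_equality2}, inclusion~\eqref{eqn:proof_decompose_equality2} reads $Pt-\alpha PP^*u\in S^{-1}(u)$; since~\eqref{eqn:proof_decompose_equality1} gives $P(t-\alpha P^*u)=Pr$, this is the statement $Pr\in S^{-1}(u)$, i.e.\ $u\in S(Pr)$. Then $t-r=\alpha P^*u\in\alpha P^* S(Pr)=\alpha T(r)$, so $t\in r+\alpha T(r)$, and single-valuedness of $J_{\alpha T}$ yields $r=J_{\alpha T}(t)$.

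The argument is essentially a careful bookkeeping exercise rather than a source of genuine difficulty; the only points requiring attention are (i)~invoking maximal monotonicity of $T$ to legitimise the passage between the resolvent equation and the inclusion $t\in r+\alpha T(r)$, and (ii)~correctly tracking the existential quantifier on $u$, since the same $u$ must serve both as the selection from $S(Pr)$ appearing in~\eqref{eqn:proof_decompose_equality1} and as the argument of $S^{-1}$ in~\eqref{eqn:proof_decompose_equality2}. I expect the substitution $Pr=Pt-\alpha PP^*u$, which couples the two equations, to be the one place where consistency between the two directions should be checked explicitly.
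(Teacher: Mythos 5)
Your proposal is correct and follows essentially the same route as the paper's proof: both translate $r=J_{\alpha T}(t)$ into the inclusion $t\in r+\alpha P^*S(Pr)$, extract the witness $u\in S(Pr)$ to obtain~\eqref{eqn:proof_decompose_equality1}, pass to $Pr\in S^{-1}(u)$ and substitute $Pr=Pt-\alpha PP^*u$ for~\eqref{eqn:proof_decompose_equality2}, and reverse these steps for the converse. If anything, you are slightly more explicit than the paper in invoking maximal monotonicity to justify single-valuedness and full domain of $J_{\alpha T}$, which is a welcome clarification rather than a deviation.
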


\begin{proof}
    Suppose that $r = J_{\alpha T}(t)$. By the definitions of $T$ and the resolvent, we have
    \[
    t \in (I + \alpha T)(r) = r + \alpha P^* S (P r).
    \]
    Hence, $S(Pr) \neq \varnothing$ and there exists $u \in S(Pr) \subseteq \mathcal{H}_2$ such that
    $t = r + \alpha P^*u,$ which establishes~\eqref{eqn:proof_decompose_equality1}.
    Since $u \in S(Pr)$ and $P$ is linear, we have
    \[u \in S(Pr) \iff S^{-1}(u) \ni Pr = P\left( t - \alpha P^*u \right) = Pt -  \alpha PP^*u,\]
    which establishes~\eqref{eqn:proof_decompose_equality2}.
    
    Conversely, let $r\in\mathcal{H}_1\times\mathcal{H}_2, t\in\mathcal{H}_1\times\mathcal{H}_2$, and suppose there exists $u\in\mathcal{H}_2$ such that~\eqref{eqn:proof_decompose_equality1} and~\eqref{eqn:proof_decompose_equality2} hold. Combining~\eqref{eqn:proof_decompose_equality2},~\eqref{eqn:proof_decompose_equality2}, and linearity of $P$, we have
    \[S^{-1}(u) \ni Pt - \alpha PP^*u = P \left( t - \alpha P^*u \right) = Pr \iff u \in S(Pr).\]
    Using~\eqref{eqn:proof_decompose_equality1} and the definition of $T$, we deduce that
    \[t = r + \alpha P^*u \in r + \alpha P^* S(Pr) = (I + \alpha T)(r).\]
    Then by definition of the resolvent, (\ref{eqn:proof_decompose_resolvent}) holds, which establishes the result.
\end{proof}
\begin{remark} \label{rem:resolvent_decompose}
If $S = N_{\{0\}}$, then $S^{-1}=0$  and~\eqref{eqn:proof_decompose_equality2} reduces to $\alpha PP^*u = Pt$.
\end{remark}

\section{A distributed forward-backward-type algorithm} \label{sec:main_algorithm}
In this section, we present an algorithm for solving~\eqref{eqn:finite_sum_inclusion} which allows heterogenous agent step sizes chosen independently of the communication graph. The derivation of this algorithm builds on the derivation of~\eqref{alg:nids} but is based on the backward-forward-reflected-backward algorithm \citep{paper:rieger_tam_noncocoercive} rather than the Davis--Yin algorithm. Its updates combine the ideas of~\eqref{alg:pdtr} and~\eqref{alg:nids} mentioned in Section \ref{sec:intro}, namely using an extragradient evaluation for the forward operator and agents communicating resolvent and forward evaluations. The proposed algorithm for finding zeros of inclusion~\eqref{eqn:finite_sum_inclusion} is given in Algorithm~\ref{alg:main_algorithm}.

\begin{algorithm}[!ht]
\caption{An algorithm with graph-independent heterogeneous step sizes for inclusion~\eqref{eqn:finite_sum_inclusion}.} 
\label{alg:main_algorithm}
Choose $\mathbf{y}^0, \mathbf{z}^0\in\mathcal{H}^N$ and set $\mathbf{v}^{0} = B(\mathbf{y}^0)$;

Choose step sizes $\alpha_1,\dots,\alpha_N$ and set $\Lambda=\mathrm{diag}(\alpha_1, \dots, \alpha_N)$;

Choose a  mixing matrix $W$ and parameter $\beta < \|\Lambda^{1/2} ((I-W)/2) \Lambda^{1/2}\|^{-1}$;

Compute $\widetilde{W}=I - (\beta/2) \Lambda (I-W)$;

Initialise variables:
\begin{subequations} \label{eqn:alg1_init}
\begin{empheq}[left=\empheqlbrace]{align}
\mathbf{x}^0 & = J_{\Lambda A} (\mathbf{z}^0) \label{eqn:alg1_x0}
\\
\mathbf{y}^1 &= 2\mathbf{x}^0 - \mathbf{z}^0 - \Lambda \mathbf{v}^0 \label{eqn:alg1_y1}
\\
\mathbf{z}^1 & = \mathbf{y}^1 + \mathbf{z}^0 - \mathbf{x}^0 \label{eqn:alg1_z1}
\\
\mathbf{x}^1 & = J_{\Lambda A}(\mathbf{z}^1) \label{eqn:alg1_x1}
\end{empheq}
\end{subequations}

\For{$k\geq1$}{
Update variables:
\begin{subequations}
    \begin{empheq}[left=\empheqlbrace]{align}
    \mathbf{v}^k &= 2B(\mathbf{y}^k) - B(\mathbf{y}^{k-1})  \label{eqn:alg1_vk}
    \\
    \mathbf{z}^{k+1} &= \mathbf{z}^k - \mathbf{x}^k + \widetilde{W} \big( 2\mathbf{x}^k - \mathbf{x}^{k-1} - \Lambda (\mathbf{v}^k - \mathbf{v}^{k-1}) \big) \label{eqn:alg1_zk}
    \\
    \mathbf{x}^{k+1} &= J_{\Lambda A} (\mathbf{z}^{k+1})    \label{eqn:alg1_xk}
    \\
    \mathbf{y}^{k+1} &= \mathbf{x}^k + \mathbf{z}^{k+1} - \mathbf{z}^k  \label{eqn:alg1_yk}
    \end{empheq}
\end{subequations}
}
\end{algorithm}
Before turning to the convergence analysis of our proposed algorithm, a few comments on the implentation and computation of parameters in Algorithm~\ref{alg:main_algorithm} are in order.
\begin{remark} \label{rem:mixing_matrix_local}
    In Algorithm~\ref{alg:main_algorithm}, communication amongst agents in \eqref{eqn:alg1_zk} is modelled by matrix multiplication with $\widetilde{W}=(\widetilde{w}_{ij})\in\mathbb{R}^{N \times N}$. However, to implement~\eqref{eqn:alg1_zk}, agent $i$ requires only their step size $\alpha_i$, row $i$ of the mixing matrix $W=(w_{ij})$, and communication with their neighbours $j\in\mathcal{N}(i)$. To see this, first note that from the definition of $\widetilde{W}$, the coefficients $\widetilde{w}_{ij}$ are
    \[\widetilde{w}_{ij}=\begin{cases}
        1-\frac{\beta}{2} \alpha_i (1-w_{ii})  & j=i, \\
        \frac{\beta}{2} \alpha_i w_{ij}  & j\in\mathcal{N}(i)\setminus\{i\}, \\
        0   & j\notin\mathcal{N}(i).
    \end{cases}\]
    Then observe that~\eqref{eqn:alg1_zk} can be expressed as
    \[z^{k+1}_i = z^k_i - x^k_i + \sum_{j=1}^N \widetilde{w}_{ij} \big(2x_j^k - x_j^{k-1} - \alpha_j (v_j^k - v_j^{k-1})\big), \quad \forall i=1,\dots,N,\]
    as outlined in \citep[Alg.~1]{paper:algorithm_nids}. Moreover, agent $i$ only needs to sum over $j\in\mathcal{N}(i)$ since $\widetilde{w}_{ij}=0$ otherwise.
    
\end{remark}

\begin{remark}  \label{rem:beta_condition}
    It may be difficult to directly choose the parameter $\beta$ in Algorithm~\ref{alg:main_algorithm} using the condition $\beta~<~\|\Lambda^{1/2}((I-W)/2)\Lambda^{1/2}\|^{-1}$. In practice, it suffices to choose $\beta \leq (\max_i\{\alpha_i\})^{-1}$. This choice is straightforward to compute by having agents repeatedly take the maximum over their neighbours' step sizes, as observed in \citep[\S3]{paper:algorithm_nids}. To see that this choice of $\beta$ is valid, observe that
    \[\|\Lambda^{1/2} ((I-W)/2) \Lambda^{1/2}\| \leq \|\Lambda^{1/2}\| \|(I-W)/2\| \|\Lambda^{1/2}\| < \|\Lambda\| = \max_i\{\alpha_i\},
    \]
    where the final inequality follows from $(I-W)/2 \prec I$ due to Definition~\ref{defn:mixing_matrix}(iv). Hence $\beta\leq (\max_i\{\alpha_i\})^{-1}$ implies $\beta<\|\Lambda^{1/2}((I-W)/2)\Lambda^{1/2}\|^{-1}$, satisfying the condition in Algorithm \ref{alg:main_algorithm}.
\end{remark}
To simplify the presentation of the convergence proof of Algorithm~\ref{alg:main_algorithm}, we first formulate inclusion~\eqref{eqn:finite_sum_inclusion} as an inclusion in the product space $\mathcal{H}^N \times \mathcal{H}^N$.
\begin{lemma}   \label{lem:transformed_inclusion}
    Let $W$ be a mixing matrix, $\gamma_1, \dots, \gamma_N>0$, and denote $\Gamma=\mathrm{diag}(\gamma_1,
    \dots,\gamma_N)$. Define the operators $A:\mathcal{H}^N\rightrightarrows\mathcal{H}^N$ and $B:\mathcal{H}^N\to\mathcal{H}^N$ as in~\eqref{eqn:product_operators}, and denote $K=((I-W)/2)^{1/2}\succeq0$ and $M~=~(cI~-~K\Gamma^2K)^{1/2}\succ0$ where $c > \|\Gamma K^2 \Gamma\|$. Define the operators $\widetilde{A}, \widetilde{C}:\mathcal{H}^N\times\mathcal{H}^N\rightrightarrows\mathcal{H}^N\times\mathcal{H}^N$ and $\widetilde{B}:\mathcal{H}^N\times\mathcal{H}^N\to\mathcal{H}^N\times\mathcal{H}^N$ by
    \begin{equation}    \label{eqn:transformed_operators}
    \begin{aligned}
    \widetilde{A}(\mathbf{z}, \tilde{\mathbf{z}}) &= \big(\Gamma A(\Gamma \mathbf{z}), N_{\{0\}}(\tilde{\mathbf{z}})\big), \quad \widetilde{B}(\mathbf{z}, \tilde{\mathbf{z}}) = \big(\Gamma B(\Gamma \mathbf{z}), 0 \big), \\
    \widetilde{C}(\mathbf{z}, \tilde{\mathbf{z}}) &= \big(\Gamma K N_{\{0\}}(K\Gamma \mathbf{z} + M \tilde{\mathbf{z}}), M N_{\{0\}}(K\Gamma \mathbf{z} + M \tilde{\mathbf{z}}) \big).
    \end{aligned}
    \end{equation}
    Then the following statements hold.
    \begin{enumerate}[(i)]
        \item \label{lem:transformed_inclusion_i} The point $(\mathbf{a}, \tilde{\mathbf{a}}) \in \mathcal{H}^N\times\mathcal{H}^N$ satisfies
        \begin{equation}    \label{eqn:proof_transformed_inclusion}
        0 \in (\widetilde{A} + \widetilde{B} + \widetilde{C} ) (\mathbf{a}, \tilde{\mathbf{a}}) \subset \mathcal{H}^N\times\mathcal{H}^N.
        \end{equation}
        if and only if $\mathbf{a} = \Gamma^{-1}(z, \dots,z)$ for some $z\in\mathcal{H}$ and $z$ satisfies\eqref{eqn:finite_sum_inclusion}.
        
        \item Suppose $A_1,\dots,A_N$ are maximally monotone, and $B_i$ are monotone and $L_i$-Lipschitz continuous for all $i=1,\dots,N$. Then $\widetilde{A}$ and $\widetilde{C}$ are maximally monotone, and $\widetilde{B}$ is monotone and $L$-Lipschitz continuous with $L~=~\max_i\{\gamma_i^2 L_i\}$. \label{lem:transformed_inclusion_ii}
    \end{enumerate}
\end{lemma}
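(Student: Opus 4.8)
The plan is to prove the two statements of Lemma~\ref{lem:transformed_inclusion} by reducing each to results already established earlier in the paper, rather than working directly with the explicit product-space operators. The key observation is that the operator $\widetilde{C}$ has exactly the structure $P^* N_{\{0\}}(P\,\cdot\,)$ appearing in Propositions~\ref{prop:product_space} and~\ref{prop:augmented_inclusion}, with $P=(K\Gamma, M):\mathcal{H}^N\times\mathcal{H}^N\to\mathcal{H}^N$ the block operator sending $(\mathbf{z},\tilde{\mathbf{z}})\mapsto K\Gamma\mathbf{z}+M\tilde{\mathbf{z}}$, so that $P^*=(K\Gamma, M)^*$ reproduces the two components of $\widetilde{C}$. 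The first task is therefore to identify this $P$ and to record that $N_{\{0\}}=\partial\iota_{\{0\}}$ is maximally monotone.

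For part~\ref{lem:transformed_inclusion_i}, I would proceed in two composed steps. First, using Proposition~\ref{prop:augmented_inclusion} with $T=\Gamma A(\Gamma\,\cdot\,)+\Gamma B(\Gamma\,\cdot\,)$, $S=N_{\{0\}}$, $P=K\Gamma$ and $Q=M$, the augmented inclusion~\eqref{eqn:proof_transformed_inclusion} in the pair $(\mathbf{a},\tilde{\mathbf{a}})$ is equivalent to the un-augmented inclusion $0\in\Gamma(A+B)(\Gamma\mathbf{a})+(K\Gamma)^* N_{\{0\}}(K\Gamma\mathbf{a})$ in $\mathbf{a}$ alone (here the roles of~\eqref{eqn:proof_augment_inclusion_a} and~\eqref{eqn:proof_augment_inclusion_b} match the first and second components of~\eqref{eqn:proof_transformed_inclusion}). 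Second, I would apply Proposition~\ref{prop:product_space} with $S=\Gamma$, $K$ the present $K$, and the operator there taken to be $A+B$ (so $T_i=A_i+B_i$); one must check the hypotheses, namely that $\ker K=\mathbb{R}\mathbf{1}$, which follows since $K=((I-W)/2)^{1/2}$ has the same kernel as $I-W$, equal to $\mathbb{R}\mathbf{1}$ by Definition~\ref{defn:mixing_matrix}(iii), and that $\Gamma$ is invertible and self-adjoint. This yields that $\mathbf{a}=\Gamma^{-1}(z,\dots,z)$ with $z$ solving $0\in\sum_i(A_i+B_i)(z)$, which is precisely~\eqref{eqn:finite_sum_inclusion}.

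For part~\ref{lem:transformed_inclusion_ii}, maximal monotonicity of $\widetilde{A}$ follows by splitting its two blocks: $\Gamma A(\Gamma\,\cdot\,)$ is maximally monotone by Lemma~\ref{lem:diagonal_composition}\ref{lem:diagonal_composition1} (taking $D=\Gamma$ and using that each $A_i$ is maximally monotone), and $N_{\{0\}}$ is maximally monotone as a subdifferential; a product of maximally monotone operators is maximally monotone. For $\widetilde{C}$, I would invoke maximal monotonicity of $N_{\{0\}}$ together with the composition rule \citep[Cor.~25.6]{book:camo_bauschke_combettes} applied to $P^* N_{\{0\}}(P\,\cdot\,)$, checking the surjectivity-type range condition; this is the step I expect to be the main obstacle, since $P=(K\Gamma, M)$ must be shown to be surjective onto $\mathcal{H}^N$ for the range condition to hold, which is where the choice $M=(cI-K\Gamma^2 K)^{1/2}\succ0$ (guaranteed positive definite by Lemma~\ref{lem:M_positive_def} with $S=\Gamma$, $T=K$ and $c>\|\Gamma K^2\Gamma\|$) becomes essential—indeed $PP^*=K\Gamma^2 K+M^2=cI\succ0$ shows $P$ is surjective. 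Finally, the monotonicity and Lipschitz claim for $\widetilde{B}$ is immediate: its second block is zero, and its first block $\Gamma B(\Gamma\,\cdot\,)$ is $\max_i\{\gamma_i^2 L_i\}$-Lipschitz by Lemma~\ref{lem:diagonal_composition}\ref{lem:diagonal_composition2} and monotone since $D T D$ preserves monotonicity when $D$ is self-adjoint, so $L=\max_i\{\gamma_i^2 L_i\}$ as claimed.
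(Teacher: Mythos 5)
Your proposal is correct and follows essentially the same route as the paper: part~\ref{lem:transformed_inclusion_i} via Proposition~\ref{prop:augmented_inclusion} (with $T=\Gamma(A+B)\Gamma$, $S=N_{\{0\}}$, $P=K\Gamma$, $Q=M$) followed by Proposition~\ref{prop:product_space} (with $T_i=A_i+B_i$, $S=\Gamma$), and part~\ref{lem:transformed_inclusion_ii} via Lemma~\ref{lem:diagonal_composition} together with the key identity $PP^*=K\Gamma^2K+M^2=cI$ for the block operator $P=\begin{bmatrix}K\Gamma & M\end{bmatrix}$, which is exactly the paper's argument for maximal monotonicity of $\widetilde{C}$. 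Your explicit verification that $\ker K=\mathbb{R}\mathbf{1}$ (needed as a hypothesis of Proposition~\ref{prop:product_space}) is a detail the paper leaves implicit, and is a welcome addition.
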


\begin{proof}
\ref{lem:transformed_inclusion_i}~We first note that $M$ is well-defined by Lemma~\ref{lem:M_positive_def}. Applying Proposition~\ref{prop:augmented_inclusion} to inclusion~\eqref{eqn:proof_transformed_inclusion} with $T = \Gamma A \Gamma + \Gamma B \Gamma$, $S = N_{\{0\}}$, $P = K\Gamma$, and $Q = M$, we deduce that $(\mathbf{a}, \tilde{\mathbf{a}}) \in \mathcal{H}^N\times\mathcal{H}^N$ satisfies~\eqref{eqn:proof_transformed_inclusion} if and only if $\mathbf{a}\in\mathcal{H}^N$ satisfies
    \begin{equation}    \label{eqn:proof_product_gamma_inclusion}
        0 \in \Gamma A(\Gamma \mathbf{a}) + \Gamma B(\Gamma \mathbf{a}) + \Gamma K N_{\{0\}}(K \Gamma \mathbf{a}).
    \end{equation}
    Applying Proposition~\ref{prop:product_space} to~\eqref{eqn:proof_product_gamma_inclusion} with $T_i = A_i + B_i$ and $S=\Gamma$, we further deduce that $\mathbf{a}\in\mathcal{H}^N$ solves~\eqref{eqn:proof_product_gamma_inclusion} if and only if $\mathbf{a}=\Gamma^{-1}(z,\dots,z)$ for some $z\in\mathcal{H}$ and $z$ satisfies~\eqref{eqn:finite_sum_inclusion}.

\ref{lem:transformed_inclusion_ii}~From Lemma~\ref{lem:diagonal_composition}, we deduce that $\Gamma A \Gamma$ is maximally monotone and $\Gamma B \Gamma$ is monotone and $L$-Lipschitz continuous with $L = \max_i \{\gamma_i^2 L_i\}$. As $N_{\{0\}}$ and $0$ are also maximally monotone, we have that $\widetilde{A}$ is maximally monotone and $\widetilde{B}$ is monotone \citep[Prop.~20.23]{book:camo_bauschke_combettes}. Furthermore, $\widetilde{B}$ is $L$-Lipschitz continuous. Finally, observe that $\widetilde{C}$ can be written as
    \[\widetilde{C} = \begin{bmatrix} (K \Gamma)& M\end{bmatrix}^* \circ N_{\{0\}} \circ \begin{bmatrix} (K \Gamma)& M\end{bmatrix}.\]
    Since $\begin{bmatrix} (K \Gamma)& M\end{bmatrix} \begin{bmatrix} (K \Gamma)& M\end{bmatrix}^* = (1/\beta)I$ which is invertible, $\widetilde{C}$ is maximally monotone \citep[Prop.~23.25]{book:camo_bauschke_combettes}, establishing the result.
\end{proof}

We now state the main result on the convergence of Algorithm~\ref{alg:main_algorithm} for an inclusion of the form~\eqref{eqn:finite_sum_inclusion}.
\begin{theorem} \label{thm:main_algorithm}
    Suppose $A_i$ is maximally monotone, $B_i$ is monotone and $L_i$-Lipschitz continuous, and $(\sum_{i=1}^N A_i + B_i )^{-1}(0) \neq \varnothing$. Let $\alpha_1, \dots, \alpha_N$ satisfy $0 < \alpha_i < 1/(8L_i)$. Then the sequences $\{\mathbf{x}^k\}_{k=0}^\infty$, $\{\mathbf{y}^k\}_{k=0}^\infty$, $\{\mathbf{z}^k\}_{k=0}^\infty$ defined by Algorithm~\ref{alg:main_algorithm} have the following properties.

    \begin{enumerate}[(i)]
        \item $\mathbf{z}^k \rightharpoonup \mathbf{z}^* \in\mathcal{H}^N$, \label{eqn:thm3.3i}

        \item $\mathbf{x}^k\rightharpoonup \mathbf{x}^*$, $\mathbf{y}^k \rightharpoonup \mathbf{x}^* = J_{\Lambda A}(\mathbf{z}^*)$, and \label{eqn:thm3.3ii}

        \item $\mathbf{x}^* = (x^*, \dots, x^*) \in\mathcal{H}^N$ where $x^* \in ( \sum_{i=1}^N A_i + B_i )^{-1}(0)\subset\mathcal{H}$. \label{eqn:thm3.3iii}
    \end{enumerate}
\end{theorem}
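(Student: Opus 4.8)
The plan is to recognise Algorithm~\ref{alg:main_algorithm} as a concrete instance of the backward-forward-reflected-backward (BFRB) splitting of \citep{paper:rieger_tam_noncocoercive} applied to the product-space inclusion supplied by Lemma~\ref{lem:transformed_inclusion}, and then to transport the abstract convergence guarantee of BFRB back through that reformulation. First I would fix the free parameters of Lemma~\ref{lem:transformed_inclusion} to match the algorithm: take $\Gamma = \Lambda^{1/2}$ (so that $\Gamma^2 = \Lambda$ and $\Gamma K^2 \Gamma = \Lambda^{1/2}((I-W)/2)\Lambda^{1/2}$, where $K = ((I-W)/2)^{1/2}$) and $c = 1/\beta$. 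With these choices the standing hypothesis $\beta < \|\Lambda^{1/2}((I-W)/2)\Lambda^{1/2}\|^{-1}$ is precisely the requirement $c > \|\Gamma K^2 \Gamma\|$ needed, via Lemma~\ref{lem:M_positive_def}, for $M = (cI - K\Gamma^2 K)^{1/2}$ to be well-defined and positive definite. Lemma~\ref{lem:transformed_inclusion} then furnishes the monotone inclusion $0 \in (\widetilde{A} + \widetilde{B} + \widetilde{C})(\mathbf{z},\tilde{\mathbf{z}})$ in which $\widetilde{A},\widetilde{C}$ are maximally monotone, $\widetilde{B}$ is monotone and $L$-Lipschitz with $L = \max_i \alpha_i L_i$, and whose solutions encode solutions of~\eqref{eqn:finite_sum_inclusion}; in particular the solution set is nonempty precisely because $(\sum_{i=1}^N A_i + B_i)^{-1}(0) \neq \varnothing$.

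The central and most delicate step is to show that writing out BFRB for this triple — a resolvent of $\widetilde{A}$, a reflected forward step on $\widetilde{B}$, and a resolvent of $\widetilde{C}$ — reproduces the updates~\eqref{eqn:alg1_vk}--\eqref{eqn:alg1_yk} after eliminating the auxiliary block $\tilde{\mathbf{z}}$. The resolvent of $\widetilde{A}$ presents no difficulty once the $\Gamma$-scaling is accounted for, and the reflection term $\mathbf{v}^k = 2B(\mathbf{y}^k) - B(\mathbf{y}^{k-1})$ is exactly the reflected forward evaluation of $\widetilde{B}$, which explains why $B$ is evaluated at $\mathbf{y}^k$ rather than $\mathbf{x}^k$. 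The real work lies in the resolvent of $\widetilde{C}$: writing $\widetilde{C} = P^* N_{\{0\}} P$ with $P = \begin{bmatrix} K\Gamma & M\end{bmatrix}$, I would apply Proposition~\ref{prop:resolvent_decompose} with $S = N_{\{0\}}$ and invoke Remark~\ref{rem:resolvent_decompose}, so that the dual relation~\eqref{eqn:proof_decompose_equality2} collapses to the linear system $\alpha PP^*u = Pt$. Since $PP^* = (1/\beta)I$ is a scalar multiple of the identity, $u$ is recovered explicitly, and substituting back through~\eqref{eqn:proof_decompose_equality1} produces the single-round communication step~\eqref{eqn:alg1_zk} with $\widetilde{W} = I - (\beta/2)\Lambda(I-W)$. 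Matching the starting values~\eqref{eqn:alg1_init} to the BFRB initialisation and verifying that $\tilde{\mathbf{z}}$ remains inert would complete the identification.

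With the identification in hand, the step size restriction falls out of the BFRB convergence hypothesis. Because the scaling $\Gamma = \Lambda^{1/2}$ folds the agent step sizes into the operators, BFRB runs at unit step and the only relevant constant is the Lipschitz modulus of $\widetilde{B}$, namely $\max_i \alpha_i L_i$. The Lipschitz threshold in the BFRB convergence theorem then reads $\max_i \alpha_i L_i < 1/8$, which is implied by the per-agent condition $\alpha_i < 1/(8L_i)$ since the latter gives $\alpha_i L_i < 1/8$ for every $i$. I would then quote the BFRB convergence result to obtain weak convergence of the product-space iterates to a zero $(\mathbf{z}^*,\tilde{\mathbf{z}}^*)$ of $\widetilde{A} + \widetilde{B} + \widetilde{C}$, together with weak convergence of the associated resolvent (\emph{shadow}) sequence and summability of the increments.

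Finally I would transport these conclusions back to the original variables. The BFRB shadow sequence corresponds to $J_{\Lambda A}(\mathbf{z}^k) = \mathbf{x}^k$, which yields claims~\ref{eqn:thm3.3i} and~\ref{eqn:thm3.3ii}; that $\mathbf{y}^k$ shares the limit $\mathbf{x}^*$ follows from the update~\eqref{eqn:alg1_yk} together with $\mathbf{z}^{k+1} - \mathbf{z}^k \to 0$. The structural claim~\ref{eqn:thm3.3iii} is then read off from Lemma~\ref{lem:transformed_inclusion}\ref{lem:transformed_inclusion_i}: the limiting zero satisfies $\mathbf{a} = \Gamma^{-1}(x^*,\dots,x^*)$ for some $x^*$ solving~\eqref{eqn:finite_sum_inclusion}, forcing the diagonal (consensus) structure of $\mathbf{x}^*$. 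The main obstacle I anticipate is the bookkeeping in the second step: tracking the two-block variables simultaneously through the $\Gamma$-scaling, the square-root factor $K = ((I-W)/2)^{1/2}$, and the resolvent decomposition, and confirming that the auxiliary variable can be consistently eliminated so that the abstract BFRB iteration collapses exactly onto~\eqref{eqn:alg1_vk}--\eqref{eqn:alg1_yk} rather than merely resembling it.
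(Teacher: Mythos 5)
Your proposal is correct and follows essentially the same route as the paper's proof: the same product-space reformulation via Lemma~\ref{lem:transformed_inclusion} with $\Gamma=\Lambda^{1/2}$ and $c=1/\beta$, the same application of the backward-forward-reflected-backward convergence theorem at unit step size (using $\max_i\{\alpha_i L_i\}<1/8$), and the same elimination of the auxiliary block through Proposition~\ref{prop:resolvent_decompose} and Remark~\ref{rem:resolvent_decompose} with $PP^*=(1/\beta)I$, including the initialisation matching. The only cosmetic difference is that you recover the weak limit of $\mathbf{y}^k$ from $\mathbf{x}^k\rightharpoonup\mathbf{x}^*$ and $\mathbf{z}^{k+1}-\mathbf{z}^k\rightharpoonup 0$ via~\eqref{eqn:alg1_yk}, whereas the paper reads it off directly from the convergence of the BFRB sequence $\mathbf{b}^k$; both are valid.
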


\begin{proof}
    Denote $\Gamma=\Lambda^{1/2}\succ0$, $K=((I-W)/2)^{1/2}\succeq0$, and $M=((1/\beta)I-K\Gamma^2 K)^{1/2}\succ0$, noting that $M$ is well-defined from Lemma~\ref{lem:M_positive_def}. Define the operators $\widetilde{A}$, $\widetilde{B}$, and $\widetilde{C}$ as in~\eqref{eqn:transformed_operators}. From Lemma~\ref{lem:transformed_inclusion}\ref{lem:transformed_inclusion_i}, we deduce that $(\mathbf{a}^*, \tilde{\mathbf{a}}^*) \in \mathcal{H}^N\times\mathcal{H}^N$ satisfies
    \begin{equation}    \label{eqn:proof_product_space_inclusion}
    0 \in (\widetilde{A} + \widetilde{B} + \widetilde{C} ) (\mathbf{a}^*, \tilde{\mathbf{a}}^*) \subset \mathcal{H}^N\times\mathcal{H}^N,    
    \end{equation}
    if and only if $\mathbf{a}^*=\Gamma^{-1}(x^*, \dots, x^*)$ and $x^*\in\mathcal{H}$ satisfies~\eqref{eqn:finite_sum_inclusion}. From Lemma~\ref{lem:transformed_inclusion}\ref{lem:transformed_inclusion_ii}, we deduce that $\widetilde{A}$ and $\widetilde{C}$ are maximally monotone, and $\widetilde{B}$ is monotone and $L$-Lipschitz continuous with $L=\max_i\{\alpha_iL_i\}$.
    
    We now apply the \emph{backward-forward-reflected-backward algorithm}~\citep{paper:rieger_tam_noncocoercive} with unit step size to inclusion~\eqref{eqn:proof_product_space_inclusion}. Given arbitrary initial points $\mathbf{c}^0, \tilde{\mathbf{c}}^0, \mathbf{b}^0, \tilde{\mathbf{b}}^0, \mathbf{b}^{-1}, \tilde{\mathbf{b}}^{-1} \in \mathcal{H}^N$, this takes the form
    \begin{subequations}
    \begin{empheq}[left=\empheqlbrace,right=\quad\forall k\geq 0]{align}
        (\mathbf{a}^k, \tilde{\mathbf{a}}^k)   & =   J_{ \widetilde{A}} ((\mathbf{c}^k, \tilde{\mathbf{c}}^k)),   \label{eqn:BFRB_1}
        \\
        (\mathbf{b}^{k+1}, \tilde{\mathbf{b}}^{k+1})   & =   J_{\widetilde{C}} \Big(2 (\mathbf{a}^k, \tilde{\mathbf{a}}^k) - (\mathbf{c}^k, \tilde{\mathbf{c}}^k) - \big(2\widetilde{B}((\mathbf{b}^k, \tilde{\mathbf{b}}^k)) - \widetilde{B}((\mathbf{b}^{k-1}, \tilde{\mathbf{b}}^{k-1}))\big) \Big),   \label{eqn:BFRB_2}
        \\
        (\mathbf{c}^{k+1}, \tilde{\mathbf{c}}^{k+1})   & =   (\mathbf{c}^k, \tilde{\mathbf{c}}^k) + (\mathbf{b}^{k+1}, \tilde{\mathbf{b}}^{k+1}) - (\mathbf{a}^k, \tilde{\mathbf{a}}^k). \label{eqn:BFRB_3}
    \end{empheq}
    \end{subequations}
    Applying \citep[Thm.~3]{paper:rieger_tam_noncocoercive} and noting that $1/(8\max_i\{\alpha_iL_i\})>1$, we deduce that $(\mathbf{c}^k, \tilde{\mathbf{c}}^k)\rightharpoonup(\mathbf{c}^*, \tilde{\mathbf{c}}^*)\in\mathcal{H}^N\times\mathcal{H}^N$, $(\mathbf{a}^k, \tilde{\mathbf{a}}^k)\rightharpoonup(\mathbf{a}^*, \tilde{\mathbf{a}}^*)$, and $(\mathbf{b}^k, \tilde{\mathbf{b}}^k) \rightharpoonup (\mathbf{a}^*, \tilde{\mathbf{a}}^*) = J_{\widetilde{A}}((\mathbf{c}^*, \tilde{\mathbf{c}}^*)) \in (\widetilde{A} + \widetilde{B} + \widetilde{C})^{-1}(0)$. From the definition of $\widetilde{A}$, we have $J_{\widetilde{A}} = (J_{\Gamma A \Gamma}, 0)$, and hence $\mathbf{a}^* = J_{\Gamma A\Gamma}(\mathbf{c}^*)$, $\tilde{\mathbf{a}}^* = 0$, and
    \begin{subequations}
    \begin{empheq}[left=\empheqlbrace, right=\quad\forall k\geq0]{align}
    \mathbf{a}^k &= J_{\Gamma A\Gamma}(\mathbf{c}^k), \label{eqn:proof_ak_final1} \\
    \tilde{\mathbf{a}}^k &= 0. \label{eqn:proof_ak_final2}
    \end{empheq}
    \end{subequations}
    Denote $\mathbf{x}^k = \Gamma \mathbf{a}^k$, $\mathbf{y}^k = \Gamma \mathbf{b}^k$, $\mathbf{z}^k = \Gamma \mathbf{c}^k$, $\mathbf{x}^*=\Gamma \mathbf{a}^* = (x^*,\dots,x^*)$, and $\mathbf{z}^*=\Gamma\mathbf{c}^*$. Since bounded linear operators are weakly continuous~\citep[Lem.~2.41]{book:camo_bauschke_combettes}, we have that $\mathbf{x}^k\rightharpoonup \mathbf{x}^*$, $\mathbf{y}^k\rightharpoonup \mathbf{x}^*$, and $\mathbf{z}^k\rightharpoonup\mathbf{z}^*\in\mathcal{H}$.
    
    Using~\eqref{eqn:proof_ak_final1} and the definition of the resolvent, noting that $\Lambda A$ is maximally monotone since $\Lambda$ is self-adjoint and strongly monotone \citep[Prop.~20.24]{book:camo_bauschke_combettes}, we deduce that 
    \begin{equation}    \label{eqn:proof_xk_final}
    \mathbf{a}^k = J_{ \Gamma A \Gamma}(\mathbf{c}^k) \iff \mathbf{a}^k + \Gamma A \Gamma \mathbf{a}^k \ni \mathbf{c}^k \iff \mathbf{x}^k + \Gamma^2 A \mathbf{x}^k \ni \mathbf{z}^k \iff \mathbf{x}^k = J_{\Lambda A}(\mathbf{z}^k), \quad \forall k\geq0,
    \end{equation}
    establishing~\eqref{eqn:alg1_xk} in Algorithm~\ref{alg:main_algorithm}. Repeating the same argument, we also see that $\mathbf{x}^* = J_{\Lambda A}(\mathbf{z}^*)$.

    Denote $\mathbf{v}^k=2B(\mathbf{y}^k)-B (\mathbf{y}^{k-1})$, for all $k\geq0$. Applying Proposition~\ref{prop:resolvent_decompose} to~\eqref{eqn:BFRB_2} with $S=\widetilde{C}$ (\emph{i.e.,} with $C = N_{\{0\}}$ and $P = \begin{bmatrix} K\Gamma & M \end{bmatrix}$) followed by simplifying using Remark~\ref{rem:resolvent_decompose},~\eqref{eqn:proof_ak_final2}, and the definition of $\widetilde{B}$ yields
    \begin{subequations}    \label{eqn:proof_bk_simplified}
    \begin{empheq}[left=\empheqlbrace,right=\quad\forall k\geq0]{align} 
        \mathbf{u}^k &= \beta K\Gamma (2 \mathbf{a}^k - \mathbf{c}^k - \Gamma\mathbf{v}^k) - \beta M \tilde{\mathbf{c}}^k. \label{eqn:proof_bk_simplified3}
        \\
        \mathbf{b}^{k+1} &= 2\mathbf{a}^k - \mathbf{c}^k - \Gamma \mathbf{v}^k - \Gamma K \mathbf{u}^k, \label{eqn:proof_bk_simplified1}
        \\
        \tilde{\mathbf{b}}^{k+1} &= - \tilde{\mathbf{c}}^k - M \mathbf{u}^k, \label{eqn:proof_bk_simplified2}
    \end{empheq}
    \end{subequations}
    By substituting~\eqref{eqn:proof_ak_final2},~\eqref{eqn:proof_bk_simplified1}, and~\eqref{eqn:proof_bk_simplified2} into~\eqref{eqn:BFRB_3}, we obtain
    \begin{subequations}    \label{eqn:proof_ck_update}
    \begin{empheq}[left=\empheqlbrace,right=\quad\forall k\geq0]{align}     
        \mathbf{c}^{k+1} & = \mathbf{a}^k - \Gamma \mathbf{v}^k - \Gamma K \mathbf{u}^k, \label{eqn:proof_ck_update1}
        \\
        \tilde{\mathbf{c}}^{k+1} & = - M \mathbf{u}^k.
        \label{eqn:proof_ck_update2}
    \end{empheq}
    \end{subequations}
    By substituting~\eqref{eqn:proof_ck_update1} into~\eqref{eqn:proof_bk_simplified1} and using the definitions of $\mathbf{x}^k$, $\mathbf{y}^k$, and $\mathbf{z}^k$, we obtain~\eqref{eqn:alg1_yk}:
    \begin{equation}    \label{eqn:proof_yk_final}
    \mathbf{y}^{k+1} = \Gamma\mathbf{b}^{k+1} = \Gamma (\mathbf{a}^k + \mathbf{c}^{k+1} - \mathbf{c}^k) = \mathbf{x}^k + \mathbf{z}^{k+1} - \mathbf{z}^k,\quad \forall k\geq0.
    \end{equation}
    By substituting~\eqref{eqn:proof_ck_update} into~\eqref{eqn:proof_bk_simplified3}, we obtain
    \begin{empheq}[right=\quad\forall k\geq1]{align}
        \mathbf{u}^k & = \beta K \Gamma \big(2\mathbf{a}^k - (\mathbf{a}^{k-1} - \Gamma \mathbf{v}^{k-1} - \Gamma K \mathbf{u}^{k-1}) - \Gamma \mathbf{v}^k \big) + \beta M^2 \mathbf{u}^{k-1},  \nonumber
        \\
        & = \beta\begin{bmatrix} K\Gamma & M \end{bmatrix} \begin{bmatrix} K\Gamma & M \end{bmatrix}^* \mathbf{u}^{k-1} + \beta K \Gamma \big(2\mathbf{a}^k - \mathbf{a}^{k-1} - \Gamma (\mathbf{v}^k - \mathbf{v}^{k-1}) \big), \nonumber
        \\
        & = \mathbf{u}^{k-1} + \beta K \Gamma \big(2\mathbf{a}^k - \mathbf{a}^{k-1} - \Gamma (\mathbf{v}^k - \mathbf{v}^{k-1}) \big), \label{eqn:proof_uk_simplified}
    \end{empheq}
    where the final equality follows from $\begin{bmatrix} K\Gamma & M \end{bmatrix}\begin{bmatrix} K\Gamma & M \end{bmatrix}^*=(1/\beta)I$. Substituting~\eqref{eqn:proof_uk_simplified} into~\eqref{eqn:proof_ck_update1} and observing that~\eqref{eqn:proof_ck_update1} implies $\Gamma K\mathbf{u}^k=\mathbf{a}^k-\mathbf{c}^{k+1}-\Gamma\mathbf{v}^k$, we deduce that
    \begin{empheq}[right=\quad\forall k\geq1]{align}
        \mathbf{c}^{k+1} & = \mathbf{a}^k - \Gamma \mathbf{v}^k - \Gamma K \big(\mathbf{u}^{k-1} + \beta K \Gamma \big(2\mathbf{a}^k - \mathbf{a}^{k-1} - \Gamma (\mathbf{v}^k - \mathbf{v}^{k-1}) \big)\big), \nonumber
        \\
        & = \mathbf{c}^{k} + \mathbf{a}^k - \mathbf{a}^{k-1} - \Gamma (\mathbf{v}^k - \mathbf{v}^{k-1}) - \beta \Gamma K^2 \Gamma \big(2\mathbf{a}^k - \mathbf{a}^{k-1} - \Gamma (\mathbf{v}^k - \mathbf{v}^{k-1}) \big), \nonumber
        \\
        & = \mathbf{c}^{k} - \mathbf{a}^k + (I - \beta \Gamma K^2\Gamma) \big(2\mathbf{a}^k - \mathbf{a}^{k-1} - \Gamma (\mathbf{v}^k - \mathbf{v}^{k-1}) \big). \label{eqn:proof_ck_final}
    \end{empheq}
    Applying $\Gamma$ to both sides of~\eqref{eqn:proof_ck_final} and using the definitions of $\mathbf{x}^k$, $\mathbf{y}^k$, $\mathbf{z}^k$, and $K$, we obtain~\eqref{eqn:alg1_zk}:
    \[\mathbf{z}^{k+1} = \mathbf{z}^k - \mathbf{x}^k +  \left(I - \beta \Lambda \frac{I-W}{2}\right) \big(2\mathbf{x}^k - \mathbf{x}^{k-1} - \Lambda (\mathbf{v}^k - \mathbf{v}^{k-1}) \big), \quad \forall k\geq1.
    \]
    Thus, we have established~\eqref{eqn:alg1_vk}-\eqref{eqn:alg1_yk}. It remains to show the initialisation~\eqref{eqn:alg1_x0}-\eqref{eqn:alg1_x1}.

    To this end, first note that by definition $\mathbf{z}^0=\Gamma\mathbf{c}^0$, $\mathbf{y}^{-1}=\Gamma\mathbf{b}^{-1}$, and $\mathbf{y}^0=\Gamma\mathbf{b}^0$. From~\eqref{eqn:proof_xk_final}, we have $\mathbf{x}^{0}=J_{\Lambda A}(\mathbf{z}^0)$, and $\mathbf{x}^{1}=J_{\Lambda A}(\mathbf{z}^1)$, which establishes~\eqref{eqn:alg1_x0} and~\eqref{eqn:alg1_x1}. Since $\mathbf{y}^{-1},\mathbf{y}^0,\tilde{\mathbf{c}}^0$ were arbitrary, we may, in particular, choose $\mathbf{y}^{-1}=\mathbf{y}^0$, which implies $\mathbf{v}^0=B(\mathbf{y}^0)$, and  
    \begin{equation} \label{eqn:c0_initialisation}
    \tilde{\mathbf{c}}^0 = M^{-1} K(2\mathbf{x}^0 - \mathbf{z}^0 - \Gamma^2 \mathbf{v}^0) = M^{-1}K\Gamma (2\mathbf{a}^0 - \mathbf{c}^0 - \Gamma \mathbf{v}^0),
    \end{equation}
    which is well-defined since $M\succ0$. Substituting~\eqref{eqn:c0_initialisation} into ~\eqref{eqn:proof_bk_simplified3} gives $\mathbf{u}^0=0$. Substituting this into~\eqref{eqn:proof_bk_simplified1} gives
    $\mathbf{y}^1 =\Gamma\mathbf{b}^1=\Gamma(2\mathbf{a}^1-\mathbf{c}^1-\Gamma\mathbf{v}^0) = 2\mathbf{x}^0 - \mathbf{z}^0 -\Lambda\mathbf{v}^0$, which establishes~\eqref{eqn:alg1_y1}. Finally,~\eqref{eqn:alg1_z1} follows from~\eqref{eqn:proof_yk_final}, which completes the derivation of Algorithm \ref{alg:main_algorithm}.
\end{proof}

\begin{remark} \label{rem:stepsize_graph}
The convergence of Algorithm~\ref{alg:main_algorithm} is independent of the communication graph, and agents are allowed to choose the largest step sizes permitted by their single-valued operators according to $\alpha_i< 1/(8L_i)$. In fact, even when agents use homogeneous step sizes, graph-independent step sizes can allow agents to use a larger step size than in~\eqref{alg:pdtr}. To see this, suppose $\alpha=\alpha_1=
\dots=\alpha_N$ and $W = I - (1/\tau)\mathcal{L}$, where $\tau>(1/2)  \lambda_\text{max}(\mathcal{L})$. Additionally, let
\begin{equation}
\tau < \frac{2}{3} \lambda_\text{max}(\mathcal{L}).
\label{eqn:mixing_matrix_condition}
\end{equation}
We then have
\[
\tau < \frac{2}{3} \lambda_\text{max}(\mathcal{L}) \iff \lambda_\text{min}(W) = 1-\frac{1}{\tau}\lambda_\text{max}(\mathcal{L}) < -\frac{1}{2} \iff \frac{1 + \lambda_\text{min}(W)}{4L} < \frac{1}{8L}.
\]
Thus, the step size upper-bound for Algorithm~\ref{alg:main_algorithm} is greater than that of~\eqref{alg:pdtr} when the mixing matrix is chosen according to~\eqref{defn:laplacian_mixing} and~\eqref{eqn:mixing_matrix_condition}. Note that it is always possible to choose $\tau$ in this manner since $\lambda_\text{max}(\mathcal{L})>0$ for connected graphs \citep[Lem.~1.7]{textbook:chung_spectral_graph}. Furthermore, with this choice of $\tau$, $W$ is a special case of the Laplacian-based constant edge weight matrix~\eqref{defn:laplacian_mixing} where that parameter $\tau$ has an upper bound.
\end{remark}

\section{Numerical experiments} \label{sec:numerics_part1}
In this section, we numerically compare \citep[Alg.~1]{paper:malitsky_tam_minmax}, as presented in~\eqref{alg:pdtr}, and our proposed Algorithm~\ref{alg:main_algorithm} across two different problems: \emph{robust least squares} and a \emph{zero-sum matrix game}. In the instances we consider, both problems have a unique solution which will be denoted by $x^*\in\mathcal{H}$.

All experiments use $N=10$ agents and several different communication graphs (barbell, cycle, and 2D grid graphs). For brevity, we display the results using a cycle graph in this section and include the remainder in Appendix~\ref{sec:appendix}. For the mixing matrix $W$, we used the Laplacian-based constant edge-weight matrix~\eqref{defn:laplacian_mixing} with $\tau=0.505\lambda_\text{max}(\mathcal{L})$, so as to satisfy~\eqref{eqn:mixing_matrix_condition}. All experiments were implemented in Python 3.12.7 using \texttt{NumPy}~v1.26.4 on a virtual machine running Windows 10 with 16GB RAM and one core of an AMD EPYC 7763 CPU @ 2.45 GHz.

Algorithm performance is evaluated across two metrics:
\begin{enumerate}[(i)] 
    \item The \emph{relative error} given by 
 \[\text{relative error} = \frac{\|\mathbf{x}^k - \mathbf{x}^*\|}{\|\mathbf{x}^*\|},\]
  where $\mathbf{x}^k$ is defined in~\eqref{eqn:alg1_xk} for Algorithm~\ref{alg:main_algorithm} and~\eqref{alg:pdtr_zk} for~\eqref{alg:pdtr}, and $\mathbf{x}^*=(x^*, \dots, x^*)\in\mathcal{H}^N$.
  
    \item The \emph{normalised residuals} (\emph{i.e.,} normalised by step size) given by
 \[\text{normalised residual} = \|\mathbf{z}^k - \mathbf{z}^{k-1}\|_{\Lambda^{-1}},\]
 where $\mathbf{z}^k$ is defined in~\eqref{eqn:alg1_zk} for Algorithm \ref{alg:main_algorithm} and~\eqref{alg:pdtr_zk} for~\eqref{alg:pdtr}. The stepsize matrix $\Lambda$ is taken as $\Lambda=\mathrm{diag}(\alpha_1, \dots, \alpha_N)$ for Algorithm \ref{alg:main_algorithm} and $\Lambda=\alpha I$ for~\eqref{alg:pdtr} with $\alpha=\max_i\{\alpha_i\}$. 
\end{enumerate}
Recall that $L_i$ denotes the Lipschitz constant of the operator $B_i$. Algorithm~\ref{alg:main_algorithm} is tested using three step size and parameter combinations:
\begin{enumerate}[(i)] 
\item heterogeneous step sizes $\alpha_i = 0.9/(8L_i)$ and parameter $\beta=\beta_\text{max}=0.9(\max_i\{\alpha_i\})^{-1}$,
\item heterogeneous step sizes $\alpha_i = 0.9/(8L_i)$ and parameter $\beta = \beta_\text{norm} = 0.9\|\Lambda^{1/2} ((I-W)/2) \Lambda^{1/2}\|^{-1}$, and
\item homogenous step sizes $\alpha_i~=~0.9/(8\max_i\{L_i\})$ and parameter $\beta=\beta_\text{max}=0.9(\max_i\{\alpha_i\})^{-1}$.
\end{enumerate}
Algorithm~\eqref{alg:pdtr} is tested using the step size $\alpha~=~0.9(1+\lambda_{\text{min}}(W))/(4\max_i\{L_i\})$.

\subsection{Robust least squares} \label{sec:numerics:rls}
In the first numerical experiment, we consider the \emph{robust least squares problem} \cite{paper_elghaoui_robust_ls}. Given a data matrix $M\in\mathbb{R}^{d\times p}$ and noisy vector $\tilde{v}\in\mathbb{R}^d$, we seek to recover a pair of vectors $(u,v)\in\mathbb{R}^p\times\mathbb{R}^d$ such that $u$ minimises the worst-case error in the least-square sense. We assume the noise in $\tilde v$ is bounded in norm by some constant $\delta>0$. Consider a population of $N$ agents such that the data matrix and the noisy vector are partitioned across the agents. In other words, suppose $d=\sum_i d_i$ and denote
\[M = \left[\begin{array}{c|c|c} M_1 & \dots & M_N \end{array}\right],\quad \tilde{v}=\begin{pmatrix}\tilde{v}_1 \\\vdots \\ \tilde{v}_N\end{pmatrix}\in\mathbb{R}^d,\quad v=\begin{pmatrix}v_1 \\ \vdots\\ v_N\end{pmatrix}\in\mathbb{R}^d,\]
where $M_i\in\mathbb{R}^{d_i\times p}$ and $\tilde{v}_i,v_i\in\mathbb{R}^{d_i}$ are local to agent $i$. 
Under this notation, the distributed robust least squares problem can then be formulated as 
\begin{subequations}    \label{eqn:robust_ls_full}
\begin{empheq}[left=\empheqlbrace]{align}
    \min_{u\in\mathbb{R}^p} \max_{v\in\mathbb{R}^d} & \,\sum_{i=1}^N \|M_i u - v_i\|^2,
    \\
    \text{s.t.} \quad & \sum_{i=1}^N\|v_i-\tilde v_i\| \leq \delta.
\end{empheq}    
\end{subequations}
Since~\eqref{eqn:robust_ls_full} is not concave in $v$, we follow the approach of \citep{paper:thekumparampil_minmax} and solve a penalty formulation. Given a parameter $\lambda>0$, the penalty formulation of~\eqref{eqn:robust_ls_full} is
\begin{equation}    \label{eqn:robust_ls}
    \min_{u\in\mathbb{R}^p} \max_{v\in\mathbb{R}^d} g(u,v) = \sum_{i=1}^N \|M_i u - v_i\|^2 - \lambda \|v_i - \tilde v_i\|^2.
\end{equation}
Note that when $\lambda\geq1$, the objective function $g$ is convex-concave. Moreover, when $\lambda>1$ and $M$ has full column rank, $g$ is strongly convex-strongly concave, and hence~\eqref{eqn:robust_ls} has a unique solution. Define the operators $E_1,\dots,E_N:\mathbb{R}^{d}\to\mathbb{R}^{d_i}$ by $E_i(v)=v_i$. Then, following~\eqref{eqn:example_minmax_operators}, solutions to~\eqref{eqn:robust_ls} can then be characterised as solutions of the monotone inclusion~\eqref{eqn:finite_sum_inclusion} with
\[
z=\begin{pmatrix}u\\v\end{pmatrix} \in\mathcal{H}=\mathbb{R}^p\times\mathbb{R}^d,\quad A_i(z) = \binom{0}{0},\quad  B_i(z) = 2 \begin{bmatrix} M_i^\top M_i & -M_i^\top E_i \\ -E_i^\top M_i & (\lambda-1)E_i^\top E_i \end{bmatrix} z - \begin{pmatrix} 0 \\ 2\lambda E_i^\top\tilde{v}_i\end{pmatrix}.
\]
 
The performance of Algorithm~\ref{alg:main_algorithm} and~\eqref{alg:pdtr} in solving~\eqref{eqn:robust_ls} was compared across two datasets. The first is the synthetic dataset used by \citet*{paper:yang_robust_ls}: $\lambda=3$, $M\in\mathbb{R}^{1000\times500}$ with entries sampled from $\mathcal{N}(0,1)$, and $\tilde{v}=M\tilde{u}+\epsilon$, where $\tilde{u}\in\mathbb{R}^{500}$ has entries sampled from $\mathcal{N}(0,0.1)$ and $\epsilon\in\mathbb{R}^{1000}$ has entries sampled from $\mathcal{N}(0,0.01)$. The second dataset is a subset of the California Housing dataset~\citep{data:california_housing}, containing $d=200$ data points regarding the median house value in California based on $p=8$ attributes, preprocessed using \texttt{scikit-learn}'s \texttt{StandardScaler} function. We follow \citet*{paper:zhang_robust_ls} in using this dataset and set $\lambda=50$. In both datasets, $M$ has full column rank, and the unique solution of the problem is computed by solving the first-order optimality conditions for~\eqref{eqn:robust_ls} as a linear system.

The results for the synthetic dataset are averaged over five realisations of $M$ and $\tilde{v}$, and use the initial points~$\mathbf{z}^0 = 0$ and $\mathbf{y}^0=0$. The results for the California Housing results are averaged over five runs using $\mathbf{y}^0=0$ and a random initial point~$\mathbf{z}^0$, with entries sampled from $U(0,1)$ and  scaled such that $\|\mathbf{z}^0\|=10$.

\begin{figure}[ht!]
    \begin{subfigure}[t]{0.5\textwidth}
        \centering
        \includegraphics[width=\textwidth]{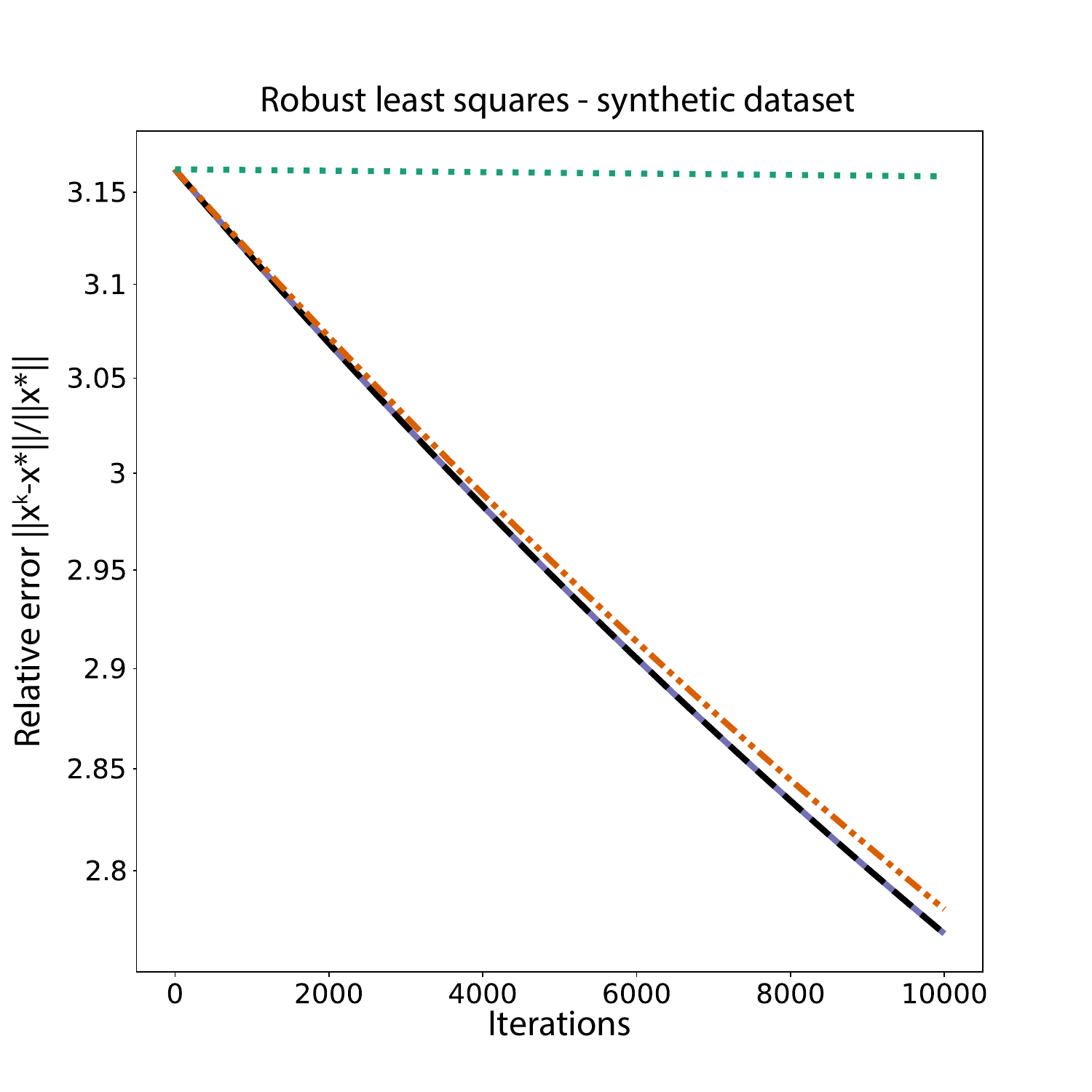}
        \caption{Relative errors vs. iterations.}
        \label{fig:rls_synthetic_relerror}
    \end{subfigure}
    \begin{subfigure}[t]{0.5\textwidth}
        \centering
        \includegraphics[width=\textwidth]{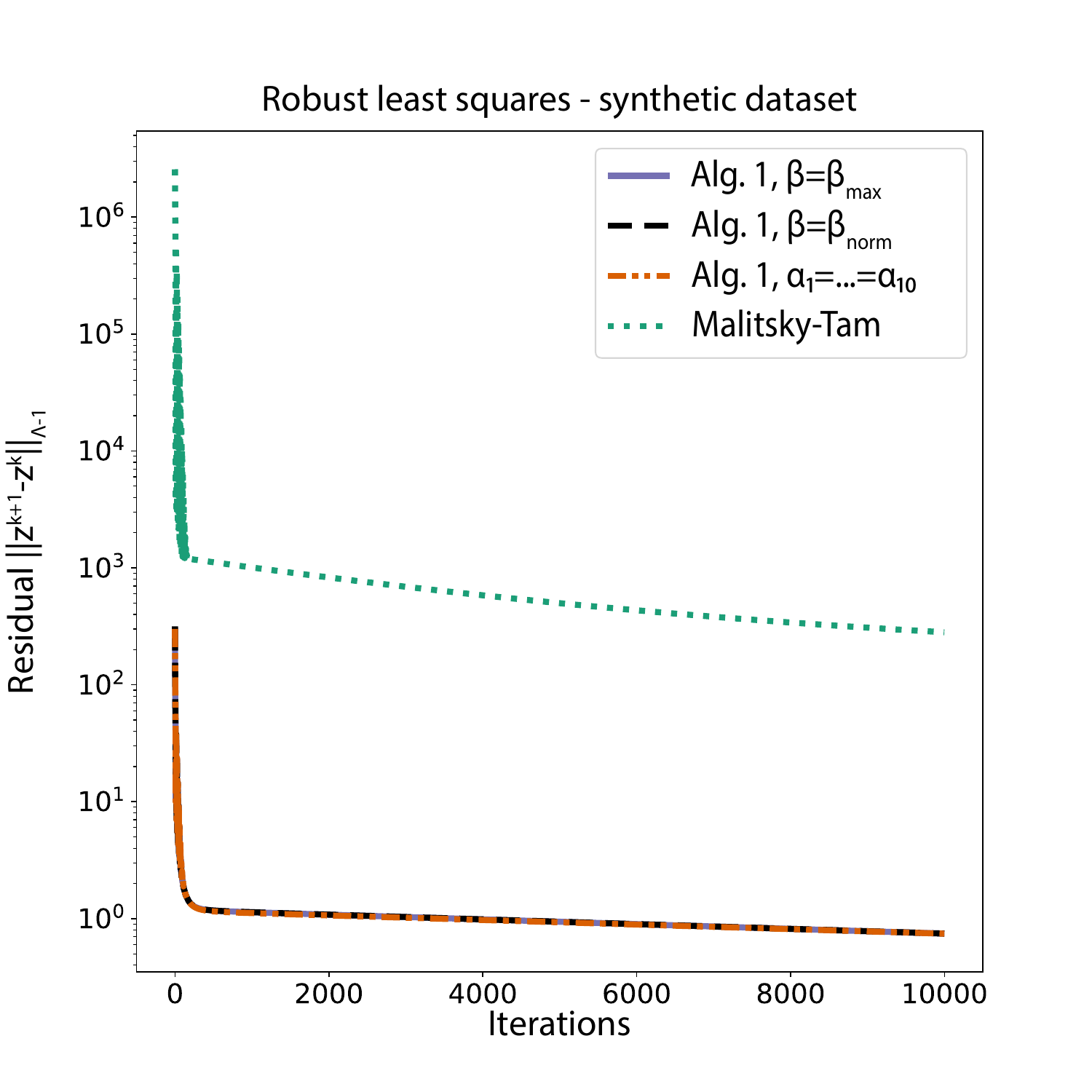}
        \caption{Residuals vs. iterations.}
        \label{fig:rls_synthetic_resid}
    \end{subfigure}
    
    \caption{Results for robust least squares using the synthetic dataset and a cycle communication graph averaged over five realisations of $M$ and $\tilde{v}$. Step sizes are given at the start of Section \ref{sec:numerics_part1}.}
    \label{fig:rls_synthetic}
\end{figure}
 
\begin{figure}[ht!]
    \begin{subfigure}{0.5\textwidth}
    \centering
    \includegraphics[width=\textwidth]{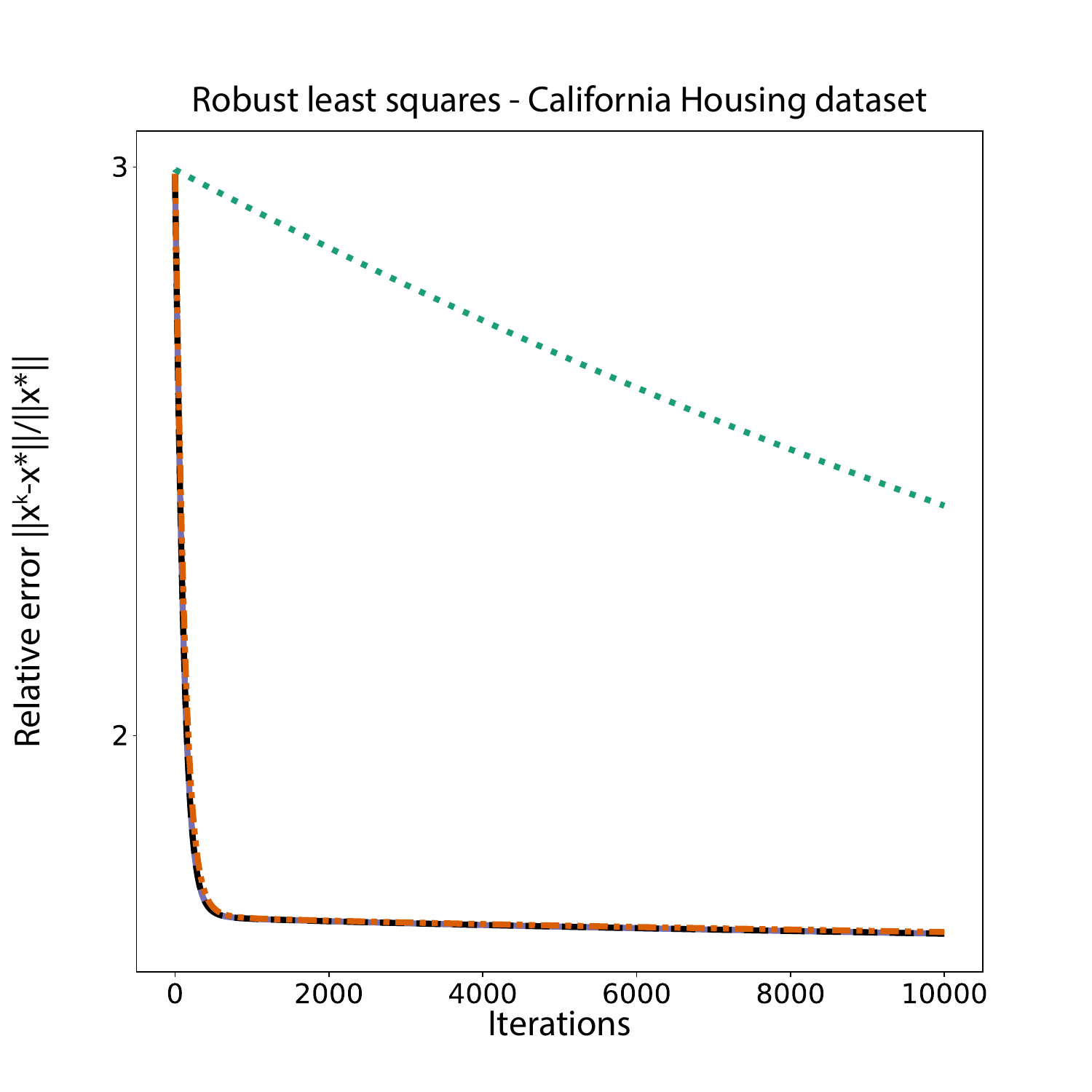}
    \caption{Relative errors vs. iterations.}
    \label{fig:rls_california_relerror}
    \end{subfigure}
    \hfill
    \begin{subfigure}{0.5\textwidth}
    \centering
    \includegraphics[width=\textwidth]{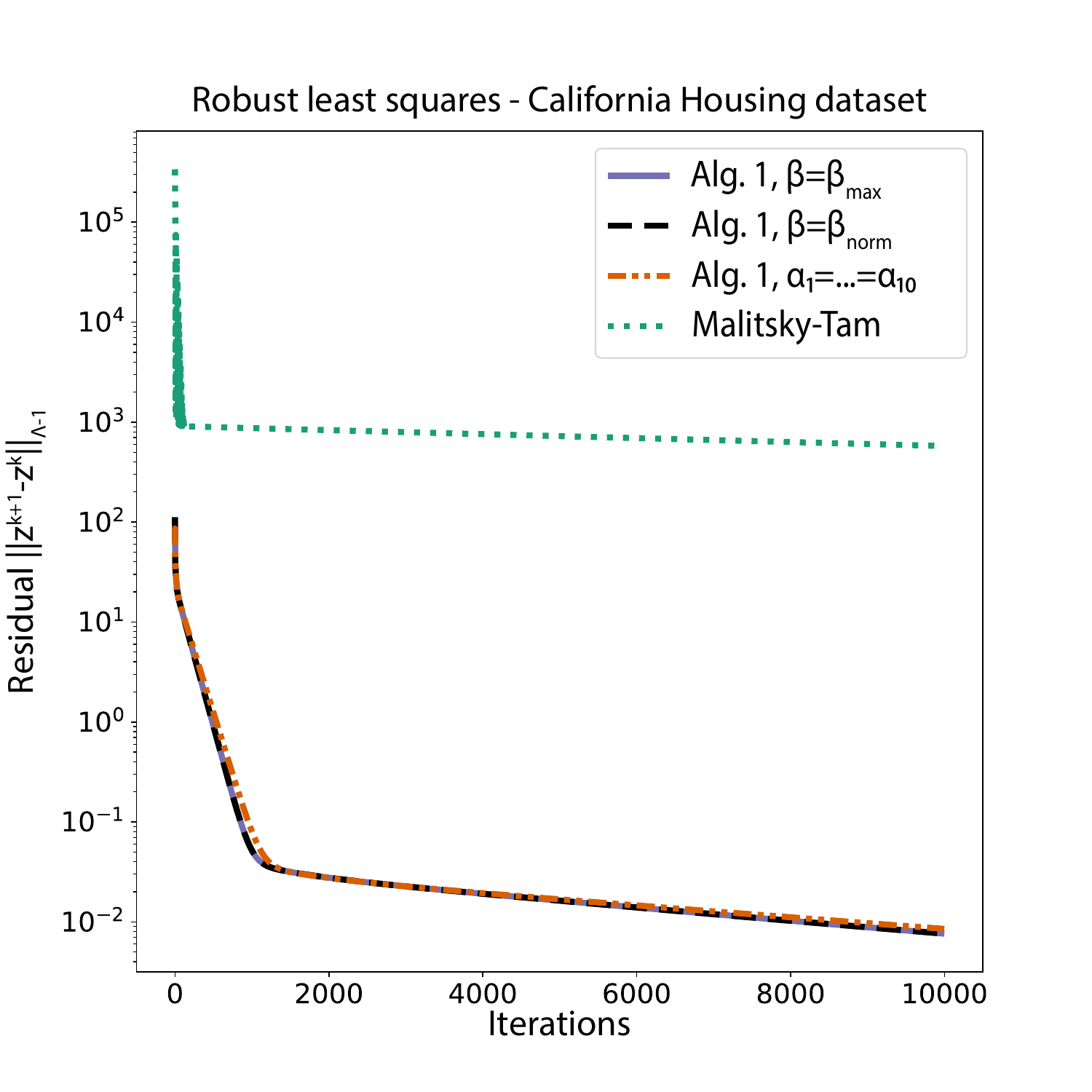}
    \caption{Residuals vs. iterations.}
    \label{fig:rls_california_resid}
    \end{subfigure}
    
    \caption{Robust least squares results using the California Housing dataset and a cycle communication graph, averaged over five runs with different initial points. Step sizes are given at the start of Section \ref{sec:numerics_part1}.}
    \label{fig:rls_california}
\end{figure}

Figures \ref{fig:rls_synthetic} and \ref{fig:rls_california} show that Algorithm \ref{alg:main_algorithm} outperformed~\eqref{alg:pdtr} across both datasets and metrics, regardless of the choice of step size or parameter $\beta$. This is potentially because of the choice of $\tau$ and the fact that the step size bound for~\eqref{alg:pdtr} depends on $W$ (and thus on $\tau$), whereas the step size bound for Algorithm \ref{alg:main_algorithm} does not. Consequently, the step size for Algorithm~\eqref{alg:pdtr} can be much smaller than that of Algorithm \ref{alg:main_algorithm} (in this experiment, approximately 100 times smaller). Using heterogenous step sizes versus homogeneous step sizes for Algorithm \ref{alg:main_algorithm} did not significantly affect performance, nor did the choice of $\beta$. The choice of communication graphs did not change these results (see Figures \ref{supp:fig:rls_synthetic}-\ref{supp:fig:rls_california}).

\subsection{Zero-sum matrix game} \label{sec:numerics:matrix_game}
In the second numerical experiment, we consider a zero-sum matrix game between two teams of $N$ players, where team one has the payoff matrix $M=\sum_{i=1}^N M_i\in\mathbb{R}^{d\times p}$ and team two has payoff matrix $-M$. Each player in team one is paired with a player from team two, and pair $i$ has the payoff matrices $(M_i,-M_i)$. \emph{Nash equilibria} of this game can be found as saddle-points of the min-max problem
\begin{equation}    \label{eqn:matrix_game}
    \min_{u\in\Delta^p} \max_{v\in\Delta^d}\, \sum_{i=1}^N \langle M_i u, v \rangle,
\end{equation}
where $\Delta^p$ is the unit simplex, $\Delta^p = \{u = (u_1, \dots, u_p)^\top\in\mathbb{R}^p_{\geq0} \,|\, \textstyle\sum_{j=1}^p u_j = 1\}$.
Following from~\eqref{eqn:example_minmax_operators}, solutions of~\eqref{eqn:matrix_game} can be characterised as solutions of~\eqref{eqn:finite_sum_inclusion} with
\[
z=\begin{pmatrix}u\\v\end{pmatrix} \in\mathcal{H}=\mathbb{R}^p\times\mathbb{R}^d,\quad A_i(z) = \begin{pmatrix} N_{\Delta^p}(u) \\ N_{\Delta^d}(v) \end{pmatrix},\quad  B_i(z) = \begin{bmatrix} 0 & M_i^* \\ -M_i & 0 \end{bmatrix} z.
\]
In the numerical experiments, we set $p=d=8$ and choose the matrices $M_i\in\mathbb{R}^{8\times 8}$ according to
\begin{equation}    \label{eqn:matrix_game_Mmatrix}
    M_i = s_i I - K_i,\quad s_i>\|K_i\|_2,\quad K_i>0,
\end{equation}
where $K>0$ denotes a matrix with positive entries. With this choice of $M_1,\dots,M_N$, the game~\eqref{eqn:matrix_game} is \emph{completely mixed} \citep[\S2]{paper:cohen_completely_mixed} and thus has a unique Nash equilibrium \citep[Thm.~B]{paper:raghavan_Mmatrix}. This equilibrium is computed using \texttt{NashPy} v0.041 to calculate the relative error.

Both algorithms were initialised with $\mathbf{y}^0=\mathbf{z}^0=\mathbf{0}$. The results are averaged over five realisations of $M_1,\dots,M_N$, which we generate according to~\eqref{eqn:matrix_game_Mmatrix} with $s_j = 1.1\|K_j\|_2$ and $K_j=jL_j$, where $L_j\in\mathbb{R}^{p\times p}$ has entries sampled from $U(0,1)$.

Figures \ref{fig:matrix_relerror} and \ref{fig:matrix_resid} show that Algorithm~\ref{alg:main_algorithm} outperforms~\eqref{alg:pdtr} in both metrics. This holds for both choices of $\beta$ and using heterogeneous or homogeneous step sizes. Using heterogeneous agent step sizes in Algorithm~\ref{alg:main_algorithm} significantly improved performance compared to using homogeneous step sizes, potentially because the Lipschitz constants of the operators $B_i$ were significantly different across agents ($(\max_i L_i - \min_i L_i)/\max_i L_i \approx 0.89$). Using homogeneous step sizes for Algorithm~\ref{alg:main_algorithm} provided better performance than~\eqref{alg:pdtr}, although the performance increase was smaller than using heterogeneous step sizes. The choice of communication graphs did not change these results (see Figure \ref{supp:fig:matrix}).

\begin{figure}[ht!]
    \begin{subfigure}{0.5\textwidth}
        \centering
        \includegraphics[width=\textwidth]{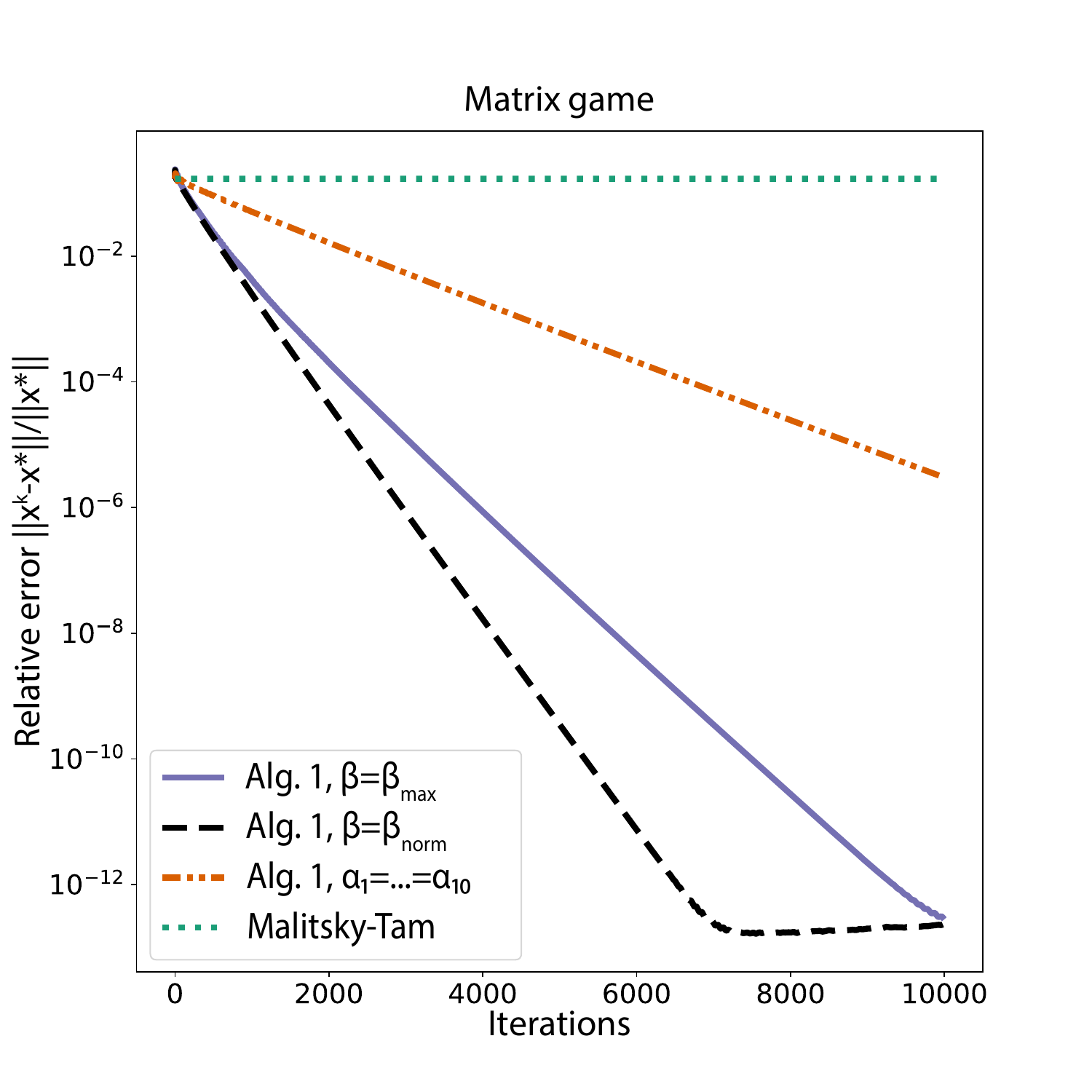}
        \caption{Relative errors vs. iterations.}
        \label{fig:matrix_relerror}
    \end{subfigure}
    \hfill
    \begin{subfigure}{0.5\textwidth}
        \centering
        \includegraphics[width=\textwidth]{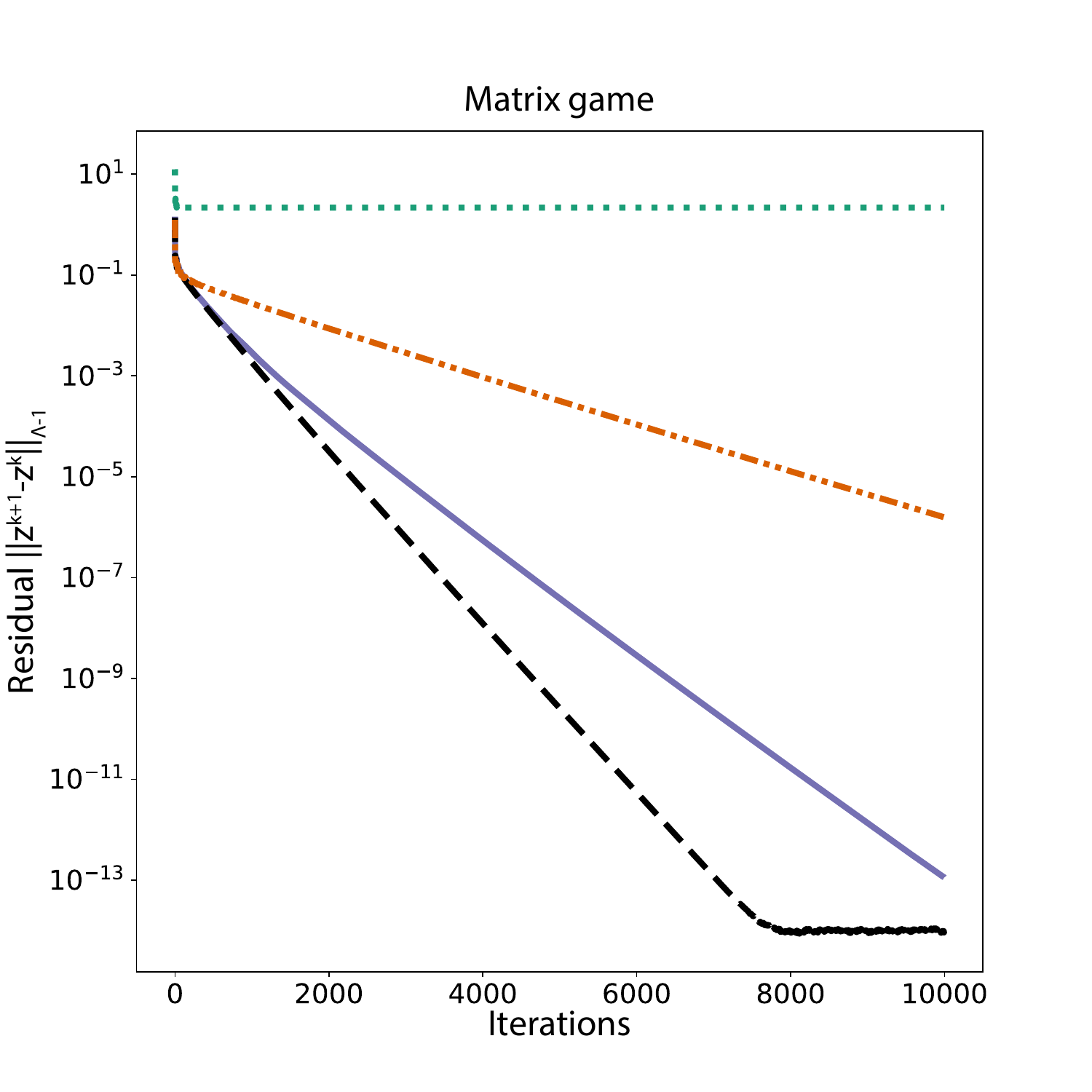}
        \caption{Residuals vs. iterations.}
        \label{fig:matrix_resid}
    \end{subfigure}

    \caption{Results for matrix game using a cycle communication graph averaged over five realisations of $M_1,\dots,M_N$. Step sizes are given at the start of Section \ref{sec:numerics_part1}.}
\end{figure}

\section{A memory-efficient approach for aggregative games}   \label{sec:boosted_algorithm}
In this section, we propose an alternative implementation of Algorithm~\ref{alg:main_algorithm} for \emph{aggregative games} \citep{paper:corchon_agg_game}. This implementation exploits the problem's aggreative structure to reduce the number of variables needed as compared to a direct application of Algorithm~\ref{alg:main_algorithm}. Precisely, we consider an aggregative game between $N$ noncooperative players where player $i$ seeks to
\begin{subequations}    \label{eqn:aggregate_game}
\begin{empheq}[left=\empheqlbrace, right={\quad\forall i=1,\dots,N.}]{align}
    \displaystyle\min_{u_i\in\mathbb{R}^{p}}\, & g_i(u_i) + f_i\big(u_i, \textstyle\sum_{j=1}^N u_j\big),
    \\
    \text{s.t.} \quad & u_i \in \Omega_i, 
    \\
    & M \sum_{j=1}^N u_j \leq b, \label{eqn:aggregate_game_coupling}
\end{empheq}
\end{subequations}
Here the function $g_i:\mathbb{R}^p\to\mathbb{R}\cup\{+\infty\}$ is proper lsc convex and $f_i:\mathbb{R}^p\times\mathbb{R}^p\to\mathbb{R}$ is convex with $L_i$-Lipschitz continuous gradient. The constraint set $\Omega_i\subseteq\mathbb{R}^p$ is nonempty closed convex, $M\in\mathbb{R}^{d\times p}$, and $b\in\mathbb{R}^d$. We assume \emph{Slater's condition} \citep[3.2.7]{book:borwein_lewis} holds for \eqref{eqn:aggregate_game} as detailed in the following assumption.
\begin{assum}[Slater's condition]   \label{assum:slaters}
    There exists $(u_1,\dots,u_N)\in\mathbb{R}^{p}\times\dots\times\mathbb{R}^p$ such that ${u_i\in\dom (g_i + \iota_{\Omega_i})}$ for all $i=1,\dots, N$, and
    $M \sum_{j=1}^N u_j < b$.
\end{assum}

In this work, solutions to~\eqref{eqn:aggregate_game} are understood in the sense of \emph{generalised Nash equilibria} \citep{paper:debreu_gne}. Precisely, a feasible point $u = (u_1, \dots, u_N)$ is said to be a \emph{generalised Nash equilibrium} of~\eqref{eqn:aggregate_game} if
\begin{equation} \label{eqn:GNE}
    g_i(u_i) + f_i(u_i, \sum_{j=1}^N u_j) \leq \inf\left\{g_i(\bar{u}_i) + f_i(\bar{u}_i, \bar{u}_i+\displaystyle\sum_{j\neq i} u_j)\,\big|\, \bar{u}_i\in\Omega_i,\,M\bar{u}_i\leq b-M\sum_{j\neq i}u_j\right\}\hfill\forall i=1,\dots,N.
\end{equation}
In other words, if a point is a Nash equilibrium then players have no incentive to unilaterally change strategy. The following proposition characterised Nash equilibrium in terms as zeros of an inclusion. 

\begin{prop}
    Suppose Assumption \ref{assum:slaters} holds. Then $u=(u_1,\dots,u_N) \in \mathbb{R}^{p}\times\dots\times\mathbb{R}^{p}$ is a generalised Nash equilibrium of~\eqref{eqn:aggregate_game} if and only if there exists a dual variable $v=(v_1,\dots,v_N) \in \mathbb{R}^{d}\times\dots\times\mathbb{R}^{d}$ such that $(u,v)$ satisfies
    \begin{equation} \label{eqn:aggregate_game_inclusion}
    \begin{pmatrix} 0 \\ 0 \end{pmatrix} \in \begin{pmatrix} \partial (g_i + \iota_{\Omega_i})(u_i) \\ N_{\mathbb{R}^{p}_{\geq0}}(v_i) \end{pmatrix} + \begin{pmatrix} \grad_{u_i} f_i(u_i, \textstyle\sum_{j=1}^N u_j ) + M^\top v_i \\ b - M\sum_{j=1}^N u_j \end{pmatrix}, \quad \forall i=1,\dots,N.
    \end{equation}
\end{prop}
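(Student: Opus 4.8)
The plan is to prove the equivalence player by player: both the Nash condition \eqref{eqn:GNE} and the inclusion \eqref{eqn:aggregate_game_inclusion} decouple into $N$ statements indexed by $i$, and for each fixed $i$ the $i$-th block of \eqref{eqn:aggregate_game_inclusion} will turn out to be exactly the Karush--Kuhn--Tucker (KKT) system of player $i$'s individual problem. First I would rewrite \eqref{eqn:GNE} as the assertion that, with the rival strategies $u_{-i}$ held fixed, $u_i$ minimises the convex function $\psi_i(\bar u_i) := g_i(\bar u_i) + \iota_{\Omega_i}(\bar u_i) + f_i(\bar u_i, \bar u_i + \sum_{j\neq i} u_j)$ over the polyhedron $D_i := \{\bar u_i : M\bar u_i \leq b - M\sum_{j\neq i} u_j\}$. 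This subproblem is convex, since $g_i + \iota_{\Omega_i}$ is proper lsc convex, $\bar u_i \mapsto f_i(\bar u_i, \bar u_i + c)$ is convex as a precomposition of $f_i$ with an affine map, and $D_i$ is polyhedral.

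Next I would match the two rows of block $i$ to the two parts of the optimality system. The second row, $0 \in N_{\mathbb{R}^{d}_{\geq 0}}(v_i) + (b - M\sum_j u_j)$, rearranges to $M\sum_j u_j - b \in N_{\mathbb{R}^{d}_{\geq 0}}(v_i)$; since $N_{\mathbb{R}^{d}_{\geq 0}}(v_i) = \{w : w\leq 0,\ \langle w, v_i\rangle = 0\}$ whenever $v_i \geq 0$, this lone inclusion simultaneously records dual feasibility $v_i \geq 0$, primal feasibility $M\sum_j u_j \leq b$, and complementary slackness $\langle v_i, b - M\sum_j u_j\rangle = 0$. The first row is stationarity: the subdifferential of the nonsmooth part is $\partial(g_i + \iota_{\Omega_i})(u_i)$, the affine constraint contributes the multiplier term through the polyhedral normal-cone identity $N_{D_i}(u_i) = \{M^\top v_i : v_i \geq 0,\ \langle v_i, M u_i - b + M\sum_{j\neq i}u_j\rangle = 0\}$, and differentiating the smooth term $\bar u_i \mapsto f_i(\bar u_i, \bar u_i + \sum_{j\neq i}u_j)$ yields the gradient written $\grad_{u_i} f_i(u_i, \sum_j u_j)$ in \eqref{eqn:aggregate_game_inclusion}.

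With these identifications in hand, the reverse implication is the easy direction and needs no constraint qualification: given $(u,v)$ solving \eqref{eqn:aggregate_game_inclusion}, the second row places $M^\top v_i$ in $N_{D_i}(u_i)$, and then the always-valid inclusion $\partial\psi_i(u_i) + N_{D_i}(u_i) \subseteq \partial(\psi_i + \iota_{D_i})(u_i)$ shows that $0 \in \partial(\psi_i + \iota_{D_i})(u_i)$, whence $u_i$ globally minimises $\psi_i$ over $D_i$ by convexity, which is precisely \eqref{eqn:GNE}. For the forward implication I would start from Fermat's rule $0 \in \partial(\psi_i + \iota_{D_i})(u_i)$ at the minimiser, split the subdifferential as $\partial\psi_i(u_i) + N_{D_i}(u_i)$, and read off the multiplier $v_i$ and the two rows of block $i$; stacking over $i=1,\dots,N$ then gives \eqref{eqn:aggregate_game_inclusion}.

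The hard part will be justifying that split in the forward direction, because the sum rule $\partial(\psi_i + \iota_{D_i}) = \partial\psi_i + N_{D_i}$ (equivalently, existence of the multiplier $v_i$, i.e.\ strong Lagrangian duality) requires a constraint qualification for each player's subproblem at the equilibrium. This is exactly where Assumption~\ref{assum:slaters} is used: Slater's condition together with the affine, polyhedral form of the coupling constraint supplies the relative-interior/strict-feasibility requirement that makes the sum rule valid and the multiplier exist. I would take particular care to confirm that the single global Slater point furnishes the qualification needed for \emph{each} individual subproblem, and to fix the convention under which the stationarity term coincides with $\grad_{u_i} f_i(u_i, \sum_j u_j)$ --- namely that this denotes the total derivative of $\bar u_i \mapsto f_i(\bar u_i, \bar u_i + \sum_{j\neq i}u_j)$ obtained by the chain rule, which splits into the two partial gradients of $f_i$.
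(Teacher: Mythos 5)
Your proposal follows essentially the same route as the paper's proof: decouple the equilibrium condition player by player, observe that each player's subproblem is convex, characterise optimality by a KKT system whose multiplier existence is backed by Assumption~\ref{assum:slaters}, and compress primal feasibility, dual feasibility and complementary slackness into the single inclusion $M\sum_{j}u_j - b \in N_{\mathbb{R}^d_{\geq0}}(v_i)$. The only structural difference is packaging: the paper invokes the Lagrangian necessary and sufficient conditions of Borwein--Lewis as a black box and then applies the subdifferential sum rule (valid because the bilinear coupling term has full domain) to reach the stated stationarity form, whereas you re-derive the KKT system from Fermat's rule, the sum rule, and the polyhedral normal-cone identity for $D_i$. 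Your explicit remark that the implication from \eqref{eqn:aggregate_game_inclusion} to \eqref{eqn:GNE} needs no constraint qualification is correct and corresponds to the sufficiency half of the paper's citation.

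However, the step you flagged as the hard part is a genuine gap, and your proposed resolution of it would fail: the single global Slater point of Assumption~\ref{assum:slaters} does \emph{not} in general furnish a Slater point for each player's subproblem at a given equilibrium $u$. For instance, take $N=2$, $p=d=1$, $M=1$, $b=0$, $g_i\equiv0$, $\Omega_1=\Omega_2=[-1,1]$, $f_1(u_1,s)=-u_1$ and $f_2\equiv0$. The point $(-\tfrac12,-\tfrac12)$ witnesses Assumption~\ref{assum:slaters}, and $u=(1,-1)$ is a generalised Nash equilibrium; yet player $2$'s deviation set at $u$ is $\{\bar u_2\in[-1,1] : \bar u_2\leq -1\}=\{-1\}$, which contains no strictly feasible point, so the per-player Slater condition fails and the split $\partial(\psi_2+\iota_{D_2})(u_2)=\partial\psi_2(u_2)+N_{D_2}(u_2)$ cannot be justified the way you intend. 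You should be aware that the paper's own proof is exposed to exactly the same issue, since the cited KKT theorem also requires strict feasibility of the individual subproblem to which it is applied, and the paper applies it with only the aggregate Slater condition in hand. Closing the argument rigorously requires either strengthening the assumption to a per-player constraint qualification (strict feasibility of $\{\bar u_i\in\dom(g_i+\iota_{\Omega_i}) : M\bar u_i< b-M\sum_{j\neq i}u_j\}$ at the relevant $u$) or an additional argument exploiting further structure; so your plan is no weaker than the paper's proof, but neither is complete at this step.
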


\begin{proof}
Note that $u_i\mapsto f_i(u_i,\, u_i+u') + g_i(u_i) + \iota_{\Omega_i}(u_i)$ is convex for all $u'\in\mathbb{R}^p$, and that $f_i$ has full domain. Hence, as Assumption~\ref{assum:slaters} holds, the \emph{Karush-Kuhn-Tucker conditions} for~\eqref{eqn:aggregate_game} are necessary and sufficient for~\eqref{eqn:GNE} \citep[Thm.~3.2.8, Prop.~3.2.3]{book:borwein_lewis}, and are given by
\begin{subequations}    \label{eqn:proof_kkt}
    \begin{empheq}[left=\empheqlbrace, right={\quad\forall i=1,\dots,N.}]{align}
    0 &\in \partial_{u_i} \big( g_i(u_i) + \iota_{\Omega_i}(u_i) + f_i(u_i, \textstyle\sum_{j=1}^N u_j)  + \langle v_i, M \sum_{j=1}^N u_j - b \rangle \big),   \label{eqn:proof_kkt_a}
    \\
    0 &\geq M \textstyle\sum_{j=1}^N u_j - b, \label{eqn:proof_kkt_b}
    \\
    0 &\leq v_i,    \label{eqn:proof_kkt_c}
    \\
    0 &= \langle v_i, M \textstyle\sum_{j=1}^N u_j - b\rangle,  \label{eqn:proof_kkt_d}
    \end{empheq}
\end{subequations}
Since $(u,v)\mapsto \langle v_i, M \sum_{j=1}^N u_j - b \rangle$ has full domain, the subdifferential sum rule~\citep[Cor.~16.48]{book:camo_bauschke_combettes} applied to~\eqref{eqn:proof_kkt_a} yields
\[
0 \in \partial (g_i + \iota_{\Omega_i})(u_i) + \grad_{u_i} f_i(u_i, \textstyle\sum_{j=1}^N u_j)  + M^\top v_i,\quad \forall i=1,\dots,N,
\]
which is the first component of~\eqref{eqn:aggregate_game_inclusion}. Using the definition of $N_{\mathbb{R}^p_{\geq 0}}$ \citep[Eqn.~3.10]{paper:auslender_lagrange_multiply}, \eqref{eqn:proof_kkt_b}-\eqref{eqn:proof_kkt_d} can be expressed compactly as
\[
0 \in N_{\mathbb{R}^{p}_{\geq0}}(v_i) - \big(b - M \sum_{j=1}^N u_j\big),\quad \forall i=1,\dots,N,
\]
which is the second component of~\eqref{eqn:aggregate_game_inclusion}. This completes the proof.
\end{proof}

To state the proposed algorithm, let $\mathcal{H} = (\mathbb{R}^{p}\times\mathbb{R}^d)^N$ and
\begin{subequations} \label{eqn:aggregate_operator}
\begin{equation} \label{eqn:aggregate_operator_i}
z_i = \binom{u_i}{v_i},\quad z=\begin{pmatrix} z_1 \\ \vdots \\ z_N \end{pmatrix} \in \mathcal{H},
\quad A_i(z) = \begin{pmatrix} A_{i1}(z_1) \\ \vdots \\ A_{iN}(z_N) \end{pmatrix},
\quad B_i(z) = \begin{pmatrix} B_{i1}(z_1, \sum_{j=1}^N z_j) \\ \vdots \\ B_{iN}(z_N, \sum_{j=1}^N z_j) \end{pmatrix},
\end{equation}
where the operators $A_{ij}:\mathbb{R}^{p}\times\mathbb{R}^d \rightrightarrows \mathbb{R}^{p}\times\mathbb{R}^d$ and $B_{ij}:(\mathbb{R}^{p}\times\mathbb{R}^d)^2 \to \mathbb{R}^{p}\times\mathbb{R}^d$ are defined by
\begin{align} \label{eqn:aggregate_operator_ii}
    A_{ii}(z_i) &= \begin{pmatrix} \partial (g_i + \iota_{\Omega_i})(u_i) \\ N_{\mathbb{R}^{d}_{\geq0}}(v_i) \end{pmatrix},
    & A_{ij}(z_i) &= 0\quad \forall j\neq i,
    \\
    B_{ii}(z_i, \sum_{j=1}^N u_j) &= \left.\begin{pmatrix} \grad_{u_i} f_i(u_i, s) + \grad_s f_i(u_i, s) + M^\top v_i \\ b- M s \end{pmatrix}\right|_{s=\sum_{j=1}^N u_j},& B_{ij}(z_i, \sum_{j=1}^N z_j) &= 0\quad \forall j\neq i.
\end{align}
\end{subequations}
Using this notation, we then observe that the system of inclusions~\eqref{eqn:aggregate_game_inclusion} can be compactly expressed as~\eqref{eqn:finite_sum_inclusion}. Moreover, using the chain rule, we note that $\nabla_{u_i}f_i$ in \eqref{eqn:aggregate_game_inclusion} can be expressed as
\[\grad_{u_i} f_i(u_i, \textstyle\sum_{j=1}^N u_j ) = \big(\grad_{u_i} f_i(u_i, s ) + \grad_{s} f_i(u_i, s) \big)\big|_{s=\sum_{j=1}^N u_j}.\]
Under this notation, the proposed memory-efficient version of Algorithm \ref{alg:main_algorithm} for the aggregative game~\eqref{alg:main_algorithm_boosted} is given in Algorithm~\ref{alg:main_algorithm_boosted}.
\begin{algorithm}
\caption{Memory-efficient variant of Algorithm~\ref{alg:main_algorithm} for aggregative games.} \label{alg:main_algorithm_boosted}
    Choose $\mathbf{y}^{0}, \mathbf{z}^0 \in (\mathbb{R}^p\times\mathbb{R}^d)^{N^2}$ and set $\mathbf{v}^0=B(\mathbf{y}^0)$;
    
    Compute $\mu^0_i = \sum_{j\notin\mathcal{N}(i)} y^0_{ij}$ and $\bar{y}^0_i = \mu^0_i + \sum_{j\in\mathcal{N}(i)} y^0_{ij}$ $\forall i=1,\dots,N$;
    
    Choose step sizes $\alpha_1,\dots,\alpha_N$ and set $\Lambda = \mathrm{diag}(\alpha_1,\dots,\alpha_N)$;
    
    Choose a mixing matrix $W$ and parameter $\beta < \|\Lambda^{1/2} ((I-W)/2) \Lambda^{1/2}\|^{-1}$;

    Compute $W = (\widetilde{w}_{ij}) = I-(\beta/2)\Lambda(I-W)$;
    
    Initialise variables:
        \begin{subequations}
        \begin{empheq}[left=\empheqlbrace]{align}
            \mathbf{x}^0 & = J_{\Lambda A} (\mathbf{z}^0) \label{eqn:alg2_x0}
            \\
            \mathbf{y}^1 &= 2\mathbf{x}^0 - \mathbf{z}^0 - \Lambda \mathbf{v}^0, \label{eqn:alg2_y1}
            \\
            \mathbf{z}^1 &= \mathbf{z}^0 + \mathbf{y}^1 - \mathbf{x}^0, \label{eqn:alg2_z1}
            \\
            \mathbf{x}^1 & = J_{\Lambda A} (\mathbf{z}^1) \label{eqn:alg2_x1}
            \\
            \mu^1_i & = \sum_{j\notin\mathcal{N}(i)} y^1_{ij}, &\forall i&=1,\dots, N, \label{eqn:alg2_mu_01}
            \\
            \bar{y}^1_i & = \mu^1_i  + \sum_{j\in\mathcal{N}(i)} y^1_{ij}, &\forall i&=1,\dots, N. \label{eqn:alg2_ybar_k}
        \end{empheq} 
        \end{subequations}

    \For{$k\geq1$}{
        Denote $v^k = (v^k_{ii})_{i=1}^N,z^k = (z^k_{ii})_{i=1}^N,x^k = (x^k_{ii})_{i=1}^N, \mu^k = (\mu^k_{i})_{i=1}^N\in (\mathbb{R}^p\times\mathbb{R}^d)^N.$
        
        \For{$i=1,\dots,N$}{ 
        Denote $y_i^k = (y^k_{ij})_{j\in\mathcal{N}^2(i)}\in (\mathbb{R}^p\times\mathbb{R}^d)^{|\mathcal{N}^2(i)|}$.
            \begin{subequations}
            \begin{empheq}[left=\empheqlbrace]{align}
                v_{ii}^k & = 2B_{ii}(y_{ii}^k, \bar{y}^k_i) - B_{ii}(y_{ii}^{k-1}, \bar{y}^{k-1}_i), \label{eqn:alg2_vk}
                \\
                z_{ii}^{k+1} & = z^k_{ii} - x^k_{ii} + \widetilde{w}_{ii} \big(2x^k_{ii} - x^{k-1}_{ii} - \alpha_i(v^k_{ii} - v^{k-1}_{ii})\big)+ \sum_{l\in\mathcal{N}(i)\setminus\{i\}} \widetilde{w}_{il} \big(2y^k_{li} - y^{k-1}_{li}\big), \label{eqn:alg2_zk_i}
                \\
                x_{ii}^{k+1} & = J_{\alpha_i A_{ii}} (z_{ii}^{k+1}), \label{eqn:alg2_xk}
                \\
                y^{k+1}_{ij} & = \begin{cases}
                    x^k_{ii} + z^{k+1}_{ii} - z^k_{ii} & j=i
                    \\
                    \sum_{l=1}^N \widetilde{w}_{il} \big( 2x^k_{lj} - x^{k-1}_{lj} - \alpha_l (v^k_{lj} - v^{k-1}_{lj}) \big) & j\in\mathcal{N}(i)\setminus\{i\}
                    \\
                    \sum_{l\in\mathcal{N}(i)} \widetilde{w}_{il} \big(2y^k_{lj} - y^{k-1}_{lj}\big) & j\in \mathcal{N}^2(i) \setminus\mathcal{N}(i)
                \end{cases} , \label{eqn:alg2_yk}
                \\
                \mu^{k+1}_i & =  \sum_{l\in\mathcal{N}(i)} \widetilde{w}_{il} \Big(2\mu^k_l - \mu^{k-1}_l + \sum_{\substack{j\in\mathcal{N}(l) \\j\notin\mathcal{N}(i)}} (2y^k_{lj} - y^{k-1}_{lj}) - \sum_{\substack{j\notin\mathcal{N}(l) \\j\in\mathcal{N}(i)}} (2y^k_{lj} - y^{k-1}_{lj})\Big), \label{eqn:alg2_muk}
                \\
                \bar{y}^{k+1}_i & = \mu^{k+1}_i + \sum_{j\in\mathcal{N}(i)} y^{k+1}_{ij}. \label{eqn:alg2_ybar}
            \end{empheq}
            \end{subequations}
        }
    }
\end{algorithm}

The remainder of this section is dedicated to showing that Algorithm~\ref{alg:main_algorithm_boosted} is an implementation of Algorithm~\ref{alg:main_algorithm}. We first apply Algorithm~\ref{alg:main_algorithm} to inclusion~\eqref{eqn:finite_sum_inclusion} with $A_i$ and $B_i$ defined in~\eqref{eqn:aggregate_operator}. From the initialisation~\eqref{eqn:alg1_init} we obtain~\eqref{eqn:alg2_x0}-\eqref{eqn:alg2_x1}. From the main iteration, we have the local update for agent $i=1,\dots,N$, for all $k\geq1$,
\begin{subequations}    \label{eqn:proof_alg2_naive}
        \begin{empheq}[left=\empheqlbrace, right={\quad\forall j=1,\dots,N,\,k\geq 1}]{align}
            v_{ij}^k & = 2B_{ij}(y_i^k, \textstyle\sum_{l=1}^N y_{il}^{k}) - B_{ij}(y_{ii}^{k-1}, \sum_{l=1}^N y_{il}^{k-1}),   \label{eqn:proof_alg2_v_naive}
            \\
            z_{ij}^{k+1} & = z^k_{ij} - x^k_{ij} + \sum_{l=1}^N \widetilde{w}_{il} \big( 2x^k_{lj} - x^{k-1}_{lj} - \alpha_l (v^k_{lj} - v^{k-1}_{lj}) \big),   \label{eqn:proof_alg2_z_naive}
            \\
            x_{ij}^{k+1} & = J_{\alpha_i A_{ij}} (z_{ij}^{k+1}),   \label{eqn:proof_alg2_x_naive}
            \\
            y^{k+1}_{ij} & = x^k_{ij} + z^{k+1}_{ij} - z^k_{ij}.\label{eqn:proof_alg2_y_naive}
        \end{empheq}
\end{subequations}
From~\eqref{eqn:proof_alg2_x_naive} and the definition of $A_i$~\eqref{eqn:aggregate_operator}, we obtain~\eqref{eqn:alg2_xk} \citep[Ex.~23.3,~23.4]{book:camo_bauschke_combettes} and deduce that $x^k_{ij} = z^k_{ij}$ for $j\neq i$. From~\eqref{eqn:proof_alg2_y_naive}, we obtain the first equation in~\eqref{eqn:alg2_yk}, $y^k_{ii} = x^k_{ii} + z^{k+1}_{ii} - z^k_{ii}$, and deduce that $y^k_{ij}=z^k_{ij}$ for $j \neq i$. Combining~\eqref{eqn:proof_alg2_z_naive} with Remark \ref{rem:mixing_matrix_local} and the fact that, for $j \neq i$, $x^k_{ij} = z^k_{ij} = y^k_{ij}$ and $v^k_{ij}=0$, we obtain~\eqref{eqn:alg2_zk_i}:
\begin{align*}
z_{ii}^{k+1} & = z^k_{ii} - x^k_{ii} + \sum_{l\in\mathcal{N}(i)} \widetilde{w}_{il} \big( 2x^k_{lj} - x^{k-1}_{lj} - \alpha_l (v^k_{lj} - v^{k-1}_{lj}) \big)
\\
&= z^k_{ii} - x^k_{ii} + \widetilde{w}_{ii} \big(2x^k_{ii} - x^{k-1}_{ii} - \alpha_l (v^k_{ii} - v^{k-1}_{ii}) \big) + \sum_{l\in\mathcal{N}(i)\setminus\{i\}} \widetilde{w}_{il} \big( 2y^k_{li} - y^{k-1}_{li}\big).
\end{align*}
Using a similar logic, we obtain the second and third equations in~\eqref{eqn:alg2_yk}:
\begin{equation} \label{eqn:proof_alg2_yk_ij}
y^{k+1}_{ij} = z^{k+1}_{ij} = \begin{cases}
    \displaystyle\sum_{l\in\mathcal{N}(i)} \widetilde{w}_{il}(2x^k_{lj} - x^{k-1}_{lj} - \alpha_j(v^k_{lj}-v^{k-1}_{lj})), & j\in\mathcal{N}(i)\setminus\{i\},
    \\
    \displaystyle\sum_{l\in\mathcal{N}(i)} \widetilde{w}_{il} \big(2y^k_{lj} - y^{k-1}_{lj}\big), & j\in\{1,\dots,N\}\setminus\mathcal{N}(i).
\end{cases}
\end{equation}
In the statement of Algorithm~\ref{alg:main_algorithm_boosted}, we write~\eqref{eqn:alg2_yk} in the form of~\eqref{eqn:proof_alg2_yk_ij} for compactness. However, note that for $j\in\mathcal{N}(i)$,~\eqref{eqn:proof_alg2_yk_ij} can be expressed as
\[y^k_{ij} = \widetilde{w}_{ij}(2x^k_{jj} - x^{k-1}_{jj} - \alpha_j(v^k_{jj}-v^{k-1}_{jj}) + \displaystyle\sum_{\substack{l\in\mathcal{N}(i)\\l\neq j}} \widetilde{w}_{il} \big(2y^k_{lj} - y^{k-1}_{lj}\big),\quad j\in\mathcal{N}(i)\setminus\{i\}.\]

Furthermore, observe that the variables $y^k_{ij}$ for $j\notin\mathcal{N}(i)$ are only used to evaluate $B_i$, and then only as an aggregate $\sum_{j=1}^N y^k_{ij}$, following from the definition of $B_i$~\eqref{eqn:aggregate_operator}. To exploit this observation, we define the variables $\mu^k_i\in\mathbb{R}^p$ and $\bar{y}^k_i \in\mathbb{R}^p$ by
\begin{equation} \label{eqn:proof_aggregate_yk}
    \mu_i^k = \sum_{j\notin\mathcal{N}(i)} y_{ij}^k, \quad \bar{y}^k_i = \sum_{j=1}^N y^k_{ij} = \mu_i^k + \sum_{j\in\mathcal{N}(i)} y^k_{ij},
\end{equation}
from which we obtain~\eqref{eqn:alg2_mu_01} and~\eqref{eqn:alg2_ybar}. To derive an update for $\mu_i^k$, first observe that
\begin{equation}    \label{eqn:summation_trick}
    \sum_{j\notin\mathcal{N}(i)} y_{lj}^k = \sum_{\substack{j\in\mathcal{N}(l) \\j\notin\mathcal{N}(i)}} y_{lj} + \sum_{\substack{j\notin\mathcal{N}(l)\\j\notin\mathcal{N}(i)}} y_{lj} = \sum_{\substack{j\in\mathcal{N}(l) \\j\notin\mathcal{N}(i)}} y_{lj}^k + \mu^k_l - \sum_{\substack{j\notin\mathcal{N}(l) \\j\in\mathcal{N}(i)}} y_{lj}^k,\quad \forall l=1,\dots,N.
\end{equation}
Then combining~\eqref{eqn:summation_trick} with the definition of $\mu^k_i$ and~\eqref{eqn:proof_alg2_yk_ij} summed over $j\notin\mathcal{N}(i)$, we obtain~\eqref{eqn:alg2_muk}:
\begin{align}
    \mu^{k+1}_i &= \sum_{l\in\mathcal{N}(i)} \widetilde{w}_{il} \sum_{j\notin\mathcal{N}(i)} \big(2y^k_{lj} - y^{k-1}_{lj}\big), \nonumber
    \\
    & = \sum_{l\in\mathcal{N}(i)} \widetilde{w}_{il} \big(2\mu^k_l - \mu^{k-1}_l + \sum_{\substack{j\in\mathcal{N}(l) \\j\notin\mathcal{N}(i)}} (2y^k_{lj} - y^{k-1}_{lj}) - \sum_{\substack{j\notin\mathcal{N}(l) \\j\in\mathcal{N}(i)}} (2y^k_{lj} - y^{k-1}_{lj})\big). \label{eqn:proof_muk}
\end{align}
Note that~\eqref{eqn:proof_muk} can be computed using only the variables $y^k_{lj}$ for $l\in\mathcal{N}(i)$ and $j\in\mathcal{N}^2(l)$. To see this, observe that $l\in\mathcal{N}(i)$ implies $i\in\mathcal{N}(l)$, and thus $j\in\mathcal{N}(i)$ implies that $j \in \mathcal{N}^2(l)$.

Finally, by combining~\eqref{eqn:proof_alg2_v_naive},~\eqref{eqn:proof_aggregate_yk}, and the definition of $B_i$~\eqref{eqn:aggregate_operator}, we obtain~\eqref{eqn:alg2_vk}, completing the derivation.

\begin{remark}[Memory efficiency of Algorithm~\ref{alg:main_algorithm_boosted}]
Some comments are in order regarding the memory required to implement Algorithm~\ref{alg:main_algorithm_boosted}, as compared to Algorithm~\eqref{alg:main_algorithm} applied to~\eqref{eqn:aggregate_game}.
\begin{enumerate}[(i)]
    \item The na\"ive implementation of Algorithm~\ref{alg:main_algorithm} in~\eqref{eqn:proof_alg2_naive} updates local variables $v^k_{ij}$, $x^k_{ij}$, $y^k_{ij}$, and $z^k_{ij}$ for all $i, j=1,\dots,N$. Thus, each agent updates four variables in $(\mathbb{R}^p \times \mathbb{R}^d)^{N}$ in each iteration. \label{implementation_i}

    \item The memory requirement described in \ref{implementation_i} can be reduced slightly with a more careful implementation of Algorithm~\ref{alg:main_algorithm}. Indeed, following~\eqref{eqn:proof_alg2_naive}, we deduced that $x^k_{ij}=z^k_{ij}=y^k_{ij}$ and $v^k_{ij}=0$ for all $i=1,\dots,N$ and $j\neq i$. Using this observation, it suffices to only update $v^k_{ii}$, $x^k_{ii}$, and $z^k_{ii}$ for all $i=1,\dots,N$, and $y^k_{ij}$ for all $i,j=1,\dots,N$. Thus, each agent updates three variables in $\mathbb{R}^p \times \mathbb{R}^d$ and one variable in $(\mathbb{R}^p \times \mathbb{R}^d)^{N}$ in each iteration. 
    \label{implementation_ii}

    \item Algorithm~\ref{alg:main_algorithm_boosted} updates local variables $v^k_{ii}$, $x^k_{ii}$, $z^k_{ii}$, and $\mu^k_i$ for all $i=1,\dots,N$, and $y^k_{ij}$ for all $j\in\mathcal{N}^2(i)$ and $i=1,\dots,N$. Thus, each agent updates four variables in $\mathbb{R}^p \times \mathbb{R}^d$ and one variable in $(\mathbb{R}^p \times \mathbb{R}^d)^{|\mathcal{N}^2(i)|}$.    
\end{enumerate}

Since $N+3 \leq 4N$ for $N\geq 1$, implementation~\ref{implementation_ii} requires fewer variables in $\mathbb{R}^p \times \mathbb{R}^d$ than~\ref{implementation_i}, and thus can be more memory efficient. Algorithm~\ref{alg:main_algorithm_boosted} requires fewer variables in $\mathbb{R}^p \times \mathbb{R}^d$ than both~\ref{implementation_i} and~\ref{implementation_ii} when the communication graph satisfies
\begin{equation}    \label{eqn:boosted_condition}
    \sum_{i=1}^N \big|\mathcal{N}^2(i)\big| < N - 1.
\end{equation}
\end{remark}

\section{Numerical experiment: virtual power plant} \label{sec:vpp_model}
In this section, we numerically compare \citep[Alg.~1]{paper:malitsky_tam_minmax}, as presented in~\eqref{alg:pdtr}, and our proposed Algorithms~\ref{alg:main_algorithm} and \ref{alg:main_algorithm_boosted} in coordinating a \emph{virtual power plant (VPP)} as an aggregative game. Specifically, we consider a VPP consisting of $N$ noncooperative players, each operating a power bank, which are connected via the local power grid. For further details on VPPs, see \citep[\S3E]{paper:wang_vpp_review}. In each time period $t=1,\dots,p$, players must decide how much electricity to buy (charging their power banks) and how much to sell (discharging their power banks) to satisfy their personal electricity usage while minimising their total cost over the time horizon. This model extends the electric vehicle charging game presented by \citep[\S6]{paper:paccagnan_nash_wardrop}, which allows charging but not discharging. We seek a vector that determines when and how much players charge and discharge their power banks such that players have no incentive to individually deviate from this schedule. 
\subsection{Problem formulation}
We represent a charging schedule for the player population using $u=(u_1,\dots,u_N)^\top\in\mathbb{R}^{2p}\times\dots\times\mathbb{R}^{2p}$, where the charging ($u_i^+$) and discharging ($u_i^-$) decisions for player $i$ are denoted
\[u_i = \begin{pmatrix} u_i^+ \\ u_i^- \end{pmatrix}\in\mathbb{R}^{2p},\quad u_i^+ = \begin{pmatrix} u_i^+(1)\\ \vdots \\ u_i^+(p) \end{pmatrix}\in\mathbb{R}^{p},\quad u_i^- = \begin{pmatrix} u_i^-(1)\\ \vdots \\ u_i^-(p) \end{pmatrix}\in\mathbb{R}^{p}\]
where $u_i^+(t)\in\mathbb{R}$ (resp.\ $u_i^-(t)$) denotes the amount of electricity drawn from the grid (resp.\ discharged to the grid) by player $i=1,\dots,N$ in time period $t=1,\dots,p$. Under this notation, we formulate the VPP coordination problem as an aggregative game, where the problem for player $i$ is given by
\begin{subequations}    \label{eqn:vpp_game}
\begin{empheq}[left=\empheqlbrace]{align}
    \displaystyle\min_{u_i\in\mathbb{R}^{2p}}\, & u_i^\top Q_i u_i + p_i^\top u_i + \left\langle \sum_{j=1}^N u_j^+ - u_j^- + m,\, u_i^+ - u_i^- \right\rangle, \label{eqn:vpp_objective}
    \\
    \text{s.t.} \quad & 0 \leq u^+_i(t) \leq \overline{u}^+_{i}(t),& \forall t=1,\dots,p,& \label{eqn:vpp_charge_in}
    \\
    & 0 \leq u^-_i(t) \leq \overline{u}^-_{i}(t),& \forall t=1,\dots,p,& \label{eqn:vpp_charge_out}
    \\
    & l^\text{low}_{i}(t) \leq \sum_{s=1}^t \left( e_i^+ u^+_i(s) - \frac{1}{e_i^-} u^-_i(s) \right) \leq l^\text{up}_{i}(t),& \forall t=1,\dots,p, &\label{eqn:vpp_charge_state_constr}
    \\
    & -m(t) \leq \sum_{j=1}^N \left( u^+_j(t) - u^-_j(t) \right) \leq K(t) - m(t). & \forall t=1,\dots,p. &\label{eqn:vpp_constr}
\end{empheq}
\end{subequations}
The objective~\eqref{eqn:vpp_objective} is the sum of two functions. The first is a quadratic function modelling the monetary cost associated with power bank capacity degradation as a function of charging and discharging, where $Q\in\mathbb{R}^{2p\times 2p}$ is diagonal and $p\in\mathbb{R}^{2p}_{\geq0}$. For further details, see \citep[\S~2.3]{paper:ma_battery_degradation}. The second function is the objective is a bilinear function modelling the net cost of electricity for a given schedule, where $m=(m(t))\in \mathbb{R}^p$ is the non-VPP electricity demand in each time period. For further details, see \citep[\S VB(2)]{paper:belgioioso_ev_charge}. Constraints~\eqref{eqn:vpp_charge_in} and~\eqref{eqn:vpp_charge_out} enforce maximum throughputs on electricity drawn from the grid in each time period, $\overline{u}^+_i(t)$, and discharged to the grid in each time period, $\overline{u}^-_i(t)$. Constraints~\eqref{eqn:vpp_charge_state_constr} enforce power banks' minimum state of charge, $l^\text{low}_i(t)\geq0$, and maximum state of charge, $ l^\text{up}_i(t)\geq0$ in each time period, where the initial state of charge is 0. Each power bank has a charging efficiency $e_i^+ \in (0,1)$ and discharging efficiency $e_i^- \in (0,1)$. Specifically, if power bank $i$ desires $e_i^+ u_i^+$ then the power grid must export $u_i^+$, and if the power bank exports $(1/e_i^-)u_i^-$ then the grid receives $u_i^-$. This formulation is necessary to represent~\eqref{eqn:vpp_charge_state_constr} as an aggregate of $u_i$. Constraints~\eqref{eqn:vpp_constr} ensure that the total amount of power drawn from the grid by all players in each time period does not exceed the grid capacity, $K(t)$, and that the net power discharged by players in each time period does not exceed non-VPP demand.

To apply Algorithms \ref{alg:main_algorithm} and \ref{alg:main_algorithm_boosted} to the VPP game, observe that~\eqref{eqn:vpp_game} can be expressed as~\eqref{eqn:aggregate_game} by setting
\begin{subequations} \label{eqn:vpp_game_notation}
\begin{equation}
    g_i(u_i) = u_i^\top Q_i u_i + p_i^\top u_i, \quad f_i\left(u_i, \textstyle\sum_{j=1}^N u_j\right) = \left\langle \begin{bmatrix} I & -I \end{bmatrix} u_i,\, \begin{bmatrix} I & -I \end{bmatrix} \sum_{j=1}^N u_j + m  \right\rangle,
\end{equation}
\begin{equation}
    \Omega_i = \left\{u_i \in\mathbb{R}^{2p}\,\Big|\, 0 \leq u_i \leq \begin{pmatrix} \bar{u}^+_i \\ \bar{u}^-_i \end{pmatrix},\,  l^\text{low}_i \leq \begin{bmatrix} e^+_i R & - \frac{1}{e_i^-} R \end{bmatrix} u_i \leq l^\text{up}_i \right\},
\end{equation}
\begin{equation}
M = \begin{bmatrix} I & -I \\ -I & I \end{bmatrix} \in \mathbb{R}^{2p\times 2p},\quad b = \begin{pmatrix} K-m \\ m\end{pmatrix} \in \mathbb{R}^{2p},
\end{equation}
\end{subequations}
where $R=(r_{ij})\in\mathbb{R}^{p\times p}$ is a lower triangular matrix, with $r_{ij} = 1$ for $i \geq j$ and $0$ otherwise. Note that $\grad_{u_i} f_i$ is $2\sqrt2$-Lipschitz continuous, and that Assumption~\ref{assum:slaters} holds for~\eqref{eqn:aggregate_game} for an appropriate choice of parameters (see also \citep[\S VA]{paper:belgioioso_ev_charge}).

\subsection{Results}
The numerical experiments use $N=20$, $40$, $60$, $80$, and $100$, and several different communication graphs (barbell, cycle, and 2D grid graphs). Note that the barbell, cycle, and grid graphs satisfy condition~\eqref{eqn:boosted_condition} for $N\geq3$, and thus Algorithm~\ref{alg:main_algorithm_boosted} uses fewer variables than Algorithm~\ref{alg:main_algorithm}. For brevity, we display the results using a cycle graph in this section and include the remainder in Appendix~\ref{sec:appendix}. For the mixing matrix $W$, we used the Laplacian-based constant edge-weight matrix~\eqref{defn:laplacian_mixing} with $\tau=0.505\lambda_\text{max}(\mathcal{L})$, so as to satisfy~\eqref{eqn:mixing_matrix_condition}.

All experiments were implemented in Python 3.12.7 using \texttt{NumPy}~v1.26.4 on a virtual machine running Windows 10 with 16GB RAM and using one core of an AMD EPYC 7763 CPU @ 2.45 GHz. Algorithm~\ref{alg:main_algorithm} and~\eqref{alg:pdtr} were implemented using NumPy~v1.24.3 and used the default QP solver from CVXOPT~v1.3.2 to compute the resolvents $J_{A_i}$. Algorithm~\ref{alg:main_algorithm_boosted} was implemented using SciPy \texttt{sparse}~v1.10.1, and Numba~v0.57.0 for updating $\mu^k_i$~\eqref{eqn:alg2_muk}. 

We run all algorithms for 1000 iterations and compare them across two metrics at the final iteration:
\begin{enumerate}[(i)] 
    \item The \emph{normalised residuals} (\emph{i.e.,} normalised by step size) given by
 \[\text{normalised residual} = \|\mathbf{z}^k - \mathbf{z}^{k-1}\|_{\Lambda^{-1}},\]
 where $\mathbf{z}^k$ is defined in~\eqref{eqn:alg1_zk} for Algorithm \ref{alg:main_algorithm} and~\eqref{alg:pdtr_zk} for~\eqref{alg:pdtr}. The stepsize matrix $\Lambda$ is taken as $\Lambda=\mathrm{diag}(\alpha_1, \dots, \alpha_N)$ for Algorithm \ref{alg:main_algorithm} and $\Lambda=\alpha I$ for~\eqref{alg:pdtr} with $\alpha=\max_i\{\alpha_i\}$,

    \item The total \emph{time} taken in seconds.
\end{enumerate}
Recall that $L_i$ denotes the Lipschitz constant of the operator $B_i$. Algorithms~\ref{alg:main_algorithm} and~\ref{alg:main_algorithm_boosted} are tested using heterogeneous step sizes $\alpha_i = 0.9/(8L_i)$ and parameter $\beta = 0.9\|\Lambda^{1/2} ((I-W)/2) \Lambda^{1/2}\|^{-1}$, as this combination gave the best performance in Section \ref{sec:numerics_part1}. Algorithm~\eqref{alg:pdtr} is tested using the step size $\alpha~=~0.9(1+\lambda_{\text{min}}(W))/(4\max_i\{L_i\})$.

\begin{figure}[ht!]
    \centering
    \begin{subfigure}[t]{0.45\textwidth}
        \centering
        \includegraphics[width=\linewidth]{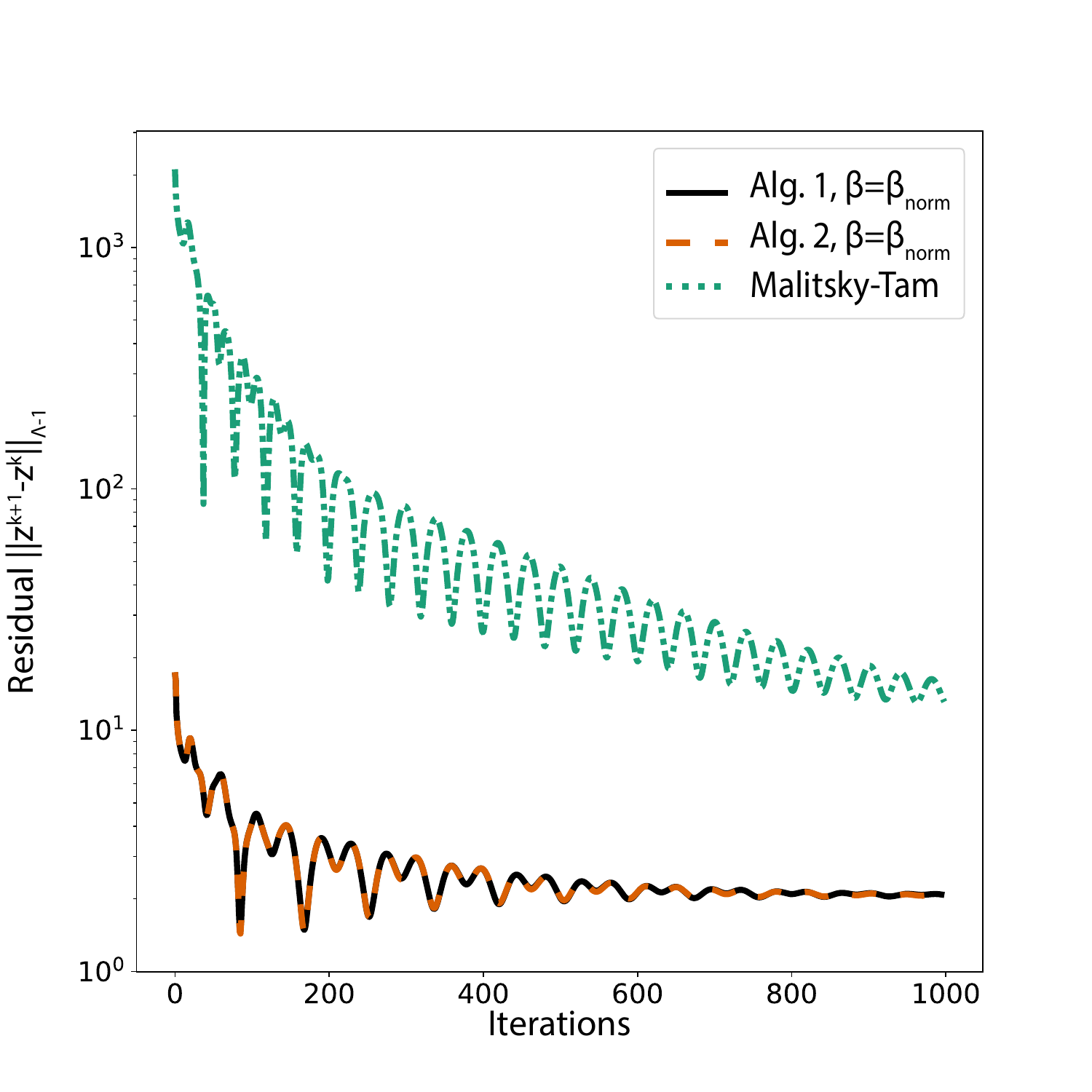}
        \caption{Normalised residuals vs. iterations.}
        \label{fig:vpp_resid}
    \end{subfigure}
    \begin{subfigure}[t]{0.45\textwidth}
        \centering
        \includegraphics[width=\textwidth]{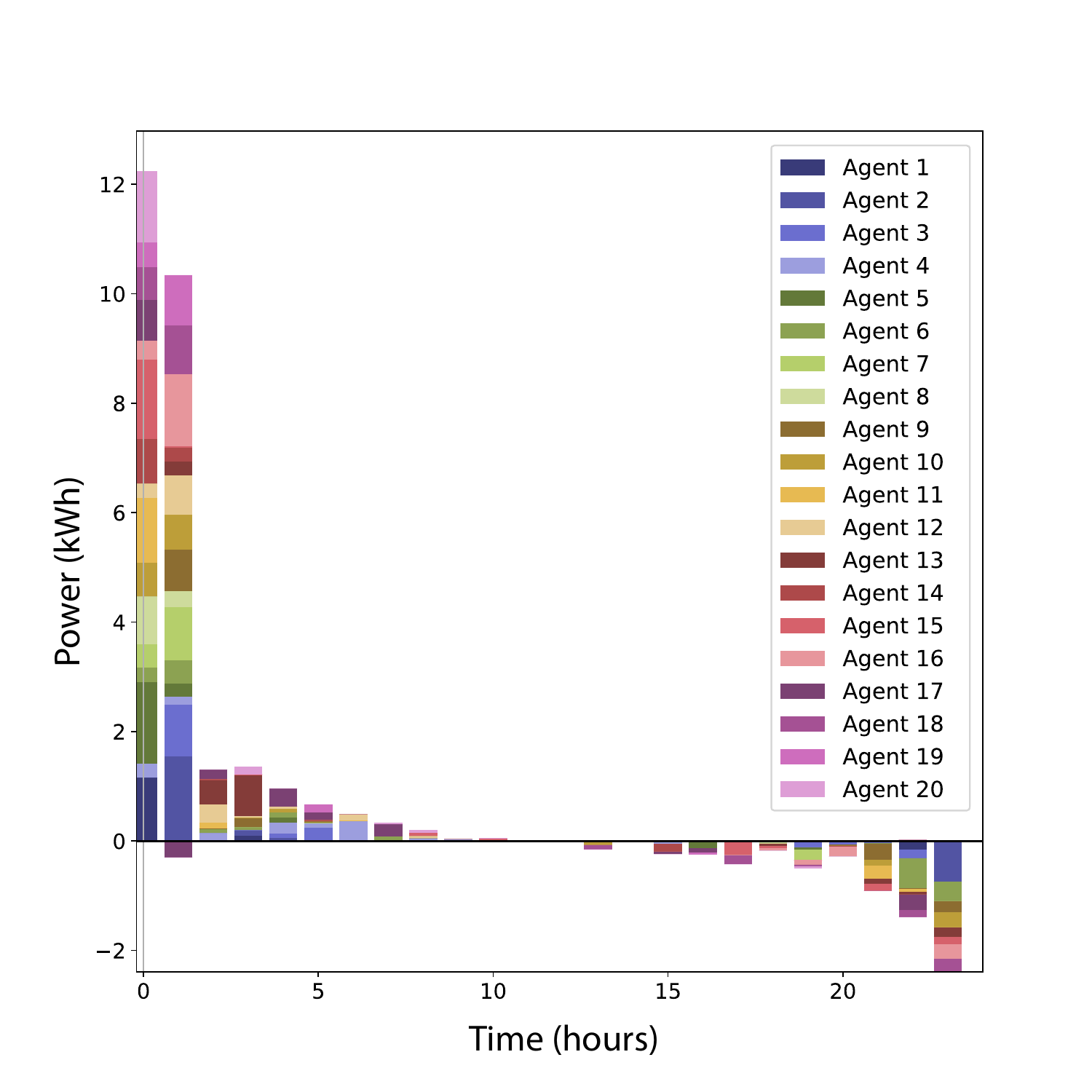}
        \caption{VPP schedule from Algorithm \ref{alg:main_algorithm}. Positive values indicate charging, negative values indicate discharging.}
        \label{fig:vpp_schedule}
    \end{subfigure}
    \caption{Sample results for the VPP problem using $N=20$ players and a cycle communication graph. Residuals are averaged over five runs with different initial points. Step sizes are given at the start of Section~\ref{sec:vpp_model}.}
    \label{fig:vpp}
\end{figure}

\begin{table}[ht!]
\centering
\setlength{\tabcolsep}{0.25em}
\begin{tabular}{c|rrrr|rrrr|rrrr}
 & \multicolumn{4}{c}{Alg.~\ref{alg:main_algorithm}} & \multicolumn{4}{c}{Alg.~\ref{alg:main_algorithm_boosted}} & \multicolumn{4}{c}{ \citep[Alg.~1]{paper:malitsky_tam_minmax}} \\
\hline
$N$ & \multicolumn{2}{c}{Residual} & \multicolumn{2}{c|}{Time (s)} & \multicolumn{2}{c}{Residual} & \multicolumn{2}{c|}{Time (s)} & \multicolumn{2}{c}{Residual} & \multicolumn{2}{c}{Time (s)} \\
20 &
2.08 & (2.08) & 251.03 & (252.68) &
\textbf{2.07} & (2.07) & 292.18 & (295.30) &
13.12 & (13.19) & \textbf{198.41} & (199.57)
\\
40 & 
\textbf{3.46} & (3.46) & 506.45 & (508.85) & 
3.54 & (3.54) & 599.51 & (605.39) &
66.50 & (66.61) & \textbf{399.72} & (412.35)
\\
60 &
\textbf{4.48} & (4.48) & 783.31 & (786.02) &
4.63 & (4.63) & 957.28 & (965.16) &
119.04 & (119.14) & \textbf{630.89} & (634.75)
\\
80 &
\textbf{5.49} & (5.49) & 1073.97 & (1078.89) &
5.72 & (5.72) & 1340.29 & (1343.13) &
175.92 & (175.97) & \textbf{881.37} & (882.53)
\\
100 &
15.74 & (15.74) & 1391.35 & (1410.60) &
\textbf{15.39} & (15.39) & 1767.72 & (1773.58) &
130.14 & (130.15) & \textbf{1182.00} & (1193.32)
\end{tabular}
\caption{VPP results using a cycle graph. Results are the average (and worst-case) over five repetitions, rounded to 2 decimal places. Bolded results indicate best performance amongst algorithms for a given $N$ and metric.}
\label{tab:vpp_results_cycle}
\end{table}
The parameters in \eqref{eqn:vpp_game} are generated analogously to \citet*{paper:belgioioso_ev_charge}. The diagonal entries of $Q_i$ are sampled from $U(0.1, 4)$ and are $0$ otherwise, and the entries of $p_i$ are sampled from $U(0.2,2)$. The entries of $c_i^+$ (resp. $c_i^-$) are sampled from $U(0,2)$ (resp. $U(-2,0)$) The entries of $\overline{u}_{i}^+$ and $\overline{u}_{i}^-$ are sampled from $U(1,5)$ with probability $0.8$ and are 0 otherwise. The efficiencies $e_i^\pm$ are sampled from $U(0.5,1)$, and the entries of $l_{i}^\text{low}$ (resp. $l_{i}^\text{up}$) are sampled from $U(0,1)$ (resp. $U(1,3)$). The non-VPP demand $m(t)$ is as in~\citep{paper:ma_nonev_demand}, and the grid capacity is $K(t)=0.55+\max_t m(t)$. The results are averaged over five runs using $\mathbf{y}^0 = \mathbf{0}$ and a random initial point $\mathbf{z}^0$, with entries sampled from $U(0,1)$ and scaled such that $\|\mathbf{z}^0\|=10$.

Table~\ref{tab:vpp_results_cycle} shows that Algorithms~\ref{alg:main_algorithm} and~\ref{alg:main_algorithm_boosted} outperform \eqref{alg:pdtr} in terms of residuals, with a sample convergence plot shown in Figure~\ref{fig:vpp_resid}. Algorithms~\ref{alg:main_algorithm} and~\ref{alg:main_algorithm_boosted} both take more time to complete 1000 iterations than~\eqref{alg:pdtr}, approximately 20\% and 50\% longer, respectively. These observations hold across all instances and graph types tested.

\section{Conclusion}    \label{sec:conclusion}
We proposed a decentralised forward-backward type algorithm (Algorithm~\ref{alg:main_algorithm}) for finding a zero of a finite sum of (potentially) set-valued operators single-valued operators. This algorithm improves on several aspects of the PG-EXTRA algorithm: only requiring agents' single-valued operators to be Lipschitz continuous, allowing agents to use heterogeneous step sizes, and only requiring agents to use local information in determining these step sizes. We also proposed a memory-efficient variant of this algorithm for aggregative games, which uses fewer variables than previous approaches (see, for example, \citep[Alg.~1]{paper:tatarenko_nedic}). Numerical experiments on robust least squares, zero-sum games, and coordinating a virtual power plan illustrated the performance of our proposed algorithms and validated the convergence results. Future directions include improving the step size bound for Algorithm~\ref{alg:main_algorithm}, which arises from the backward-forward-reflected-backward algorithm \citep{paper:rieger_tam_noncocoercive}, allowing variable stepsizes to permit locally Lipschitz continuous operators $B_i$ rather than globally Lipschitz continuous, and extending Algorithm~\ref{alg:main_algorithm} to time-varying mixing matrices for communication graphs that evolve through time.

\section{Acknowledgements}
MKT is supported in part by Australian Research Council grant DP23010174.
LT is supported by a Melbourne Research Scholarship. This research was supported by The University of Melbourne’s Research Computing Services and the Petascale Campus.

\bibliography{references}

\appendix
\setcounter{figure}{0}
\setcounter{table}{0}
\renewcommand{\thefigure}{A.\arabic{figure}}
\renewcommand{\thetable}{A.\arabic{table}}
\setlength{\tabcolsep}{0.25em}
\section{Additional numerical results} \label{sec:appendix}
\begin{figure}[ht!]
    \begin{subfigure}{0.45\textwidth}
    \centering
    \includegraphics[width=\textwidth]{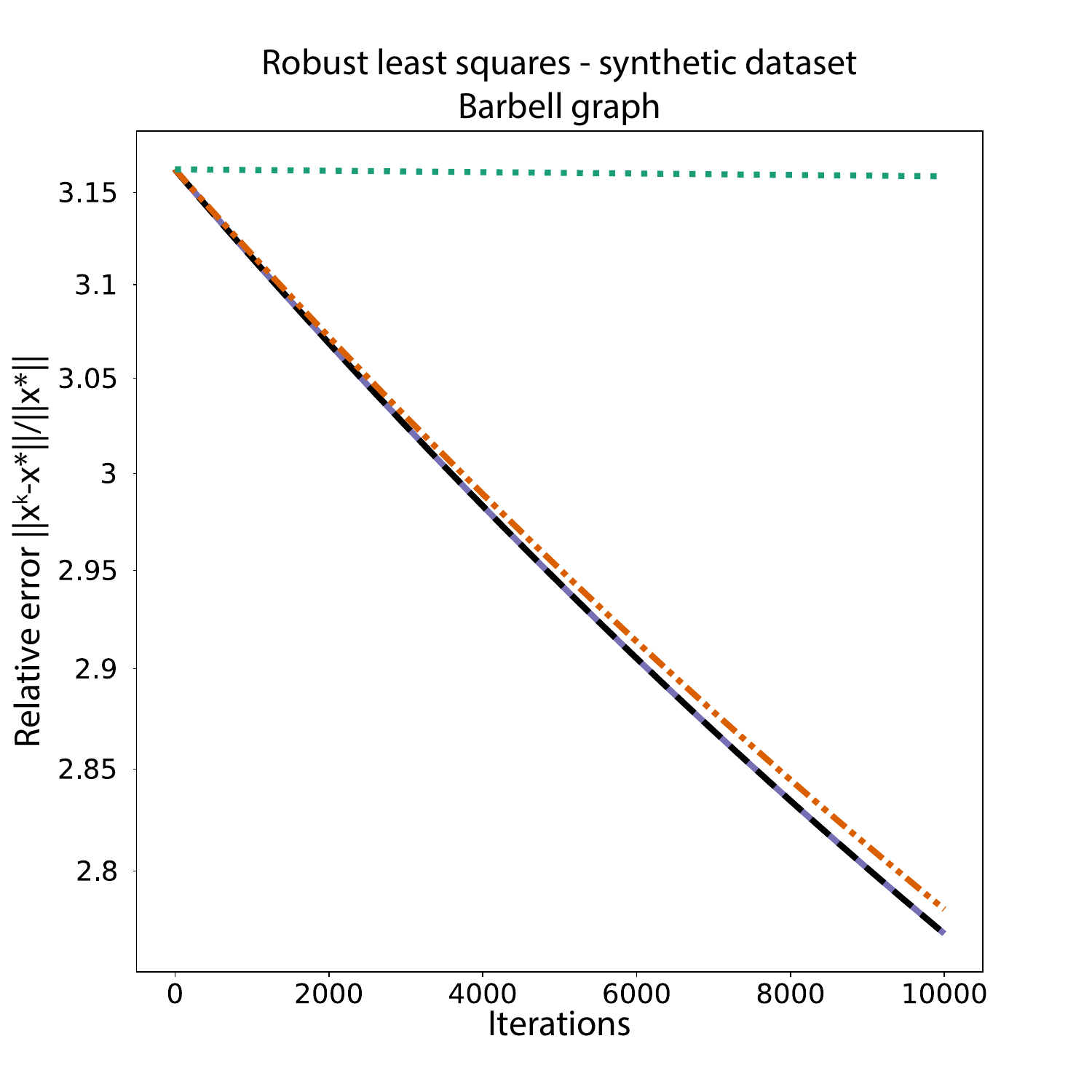}
    \caption{Barbell graph: relative error vs. iterations.}
    \label{fig:rls_synthetic_relerror_barbell}
    \end{subfigure}
    \hfill
    \begin{subfigure}{0.45\textwidth}
    \centering
    \includegraphics[width=\textwidth]{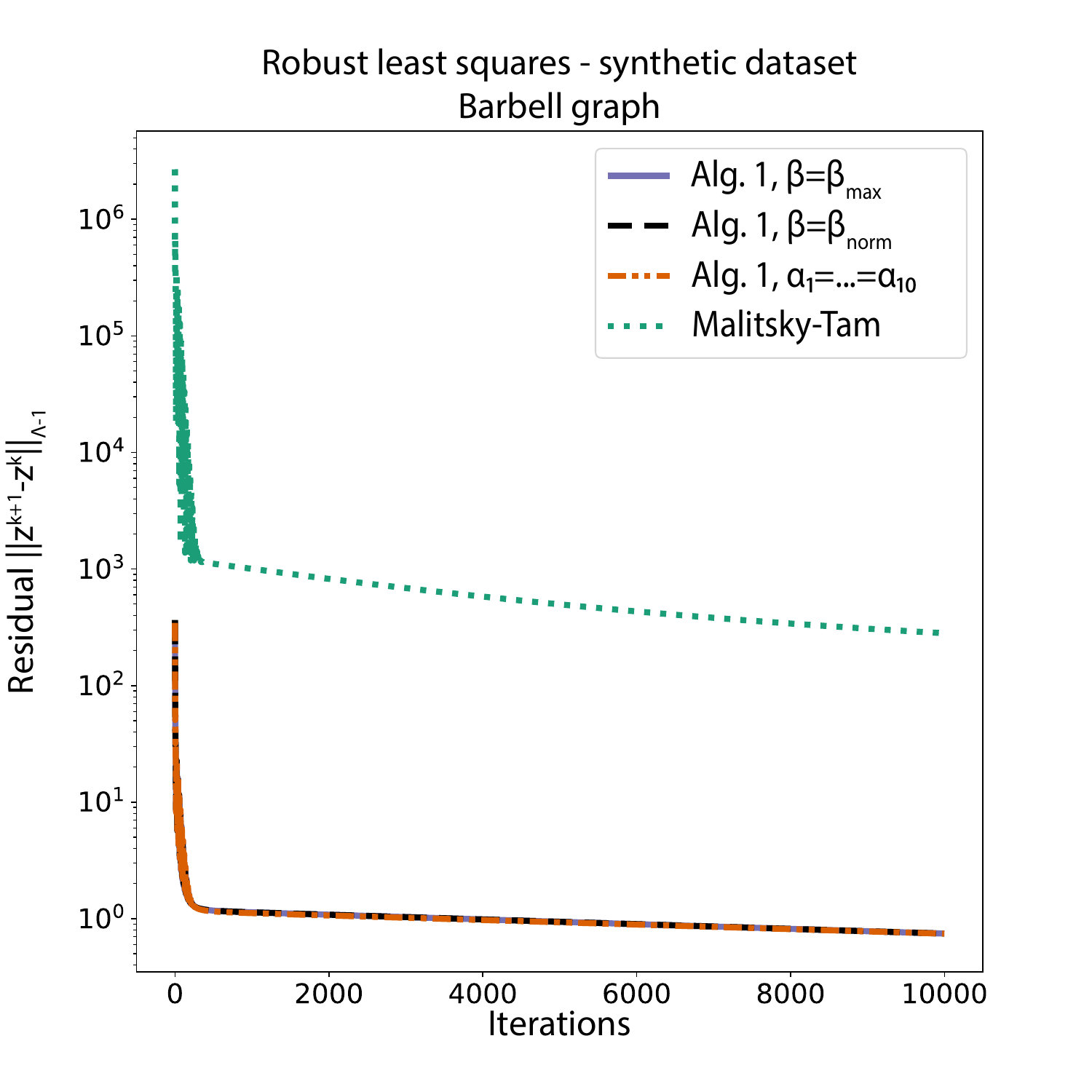}
    \caption{Barbell graph: residuals vs. iterations.}
    \label{fig:rls_synthetic_resid_barbell}
    \end{subfigure}

    \begin{subfigure}{0.45\textwidth}
    \centering
    \includegraphics[width=\textwidth]{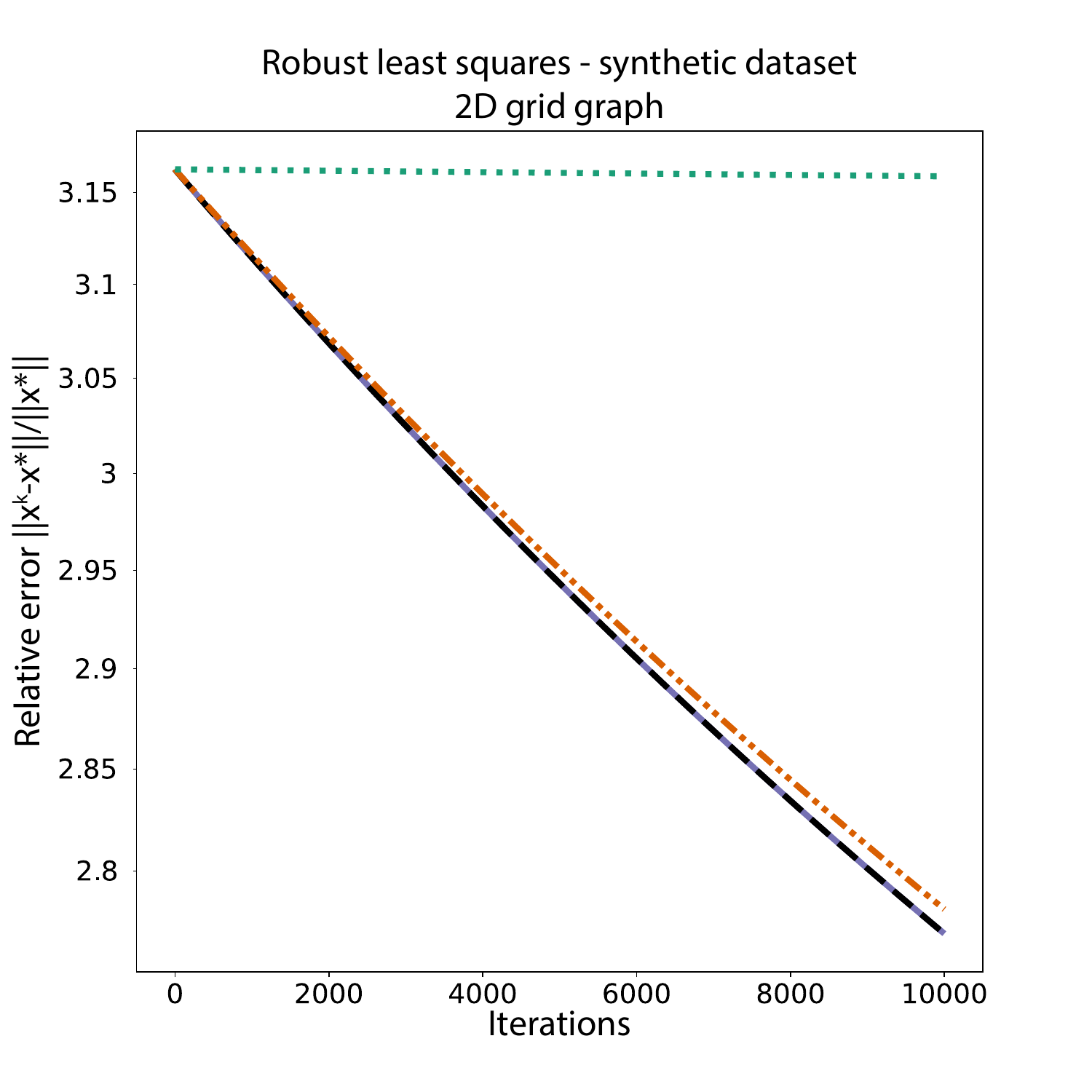}
    \caption{2D grid graph: relative error vs. iterations.}
    \label{fig:rls_synthetic_relerror_grid2D}
    \end{subfigure}
    \hfill
    \begin{subfigure}{0.45\textwidth}
    \centering
    \includegraphics[width=\textwidth]{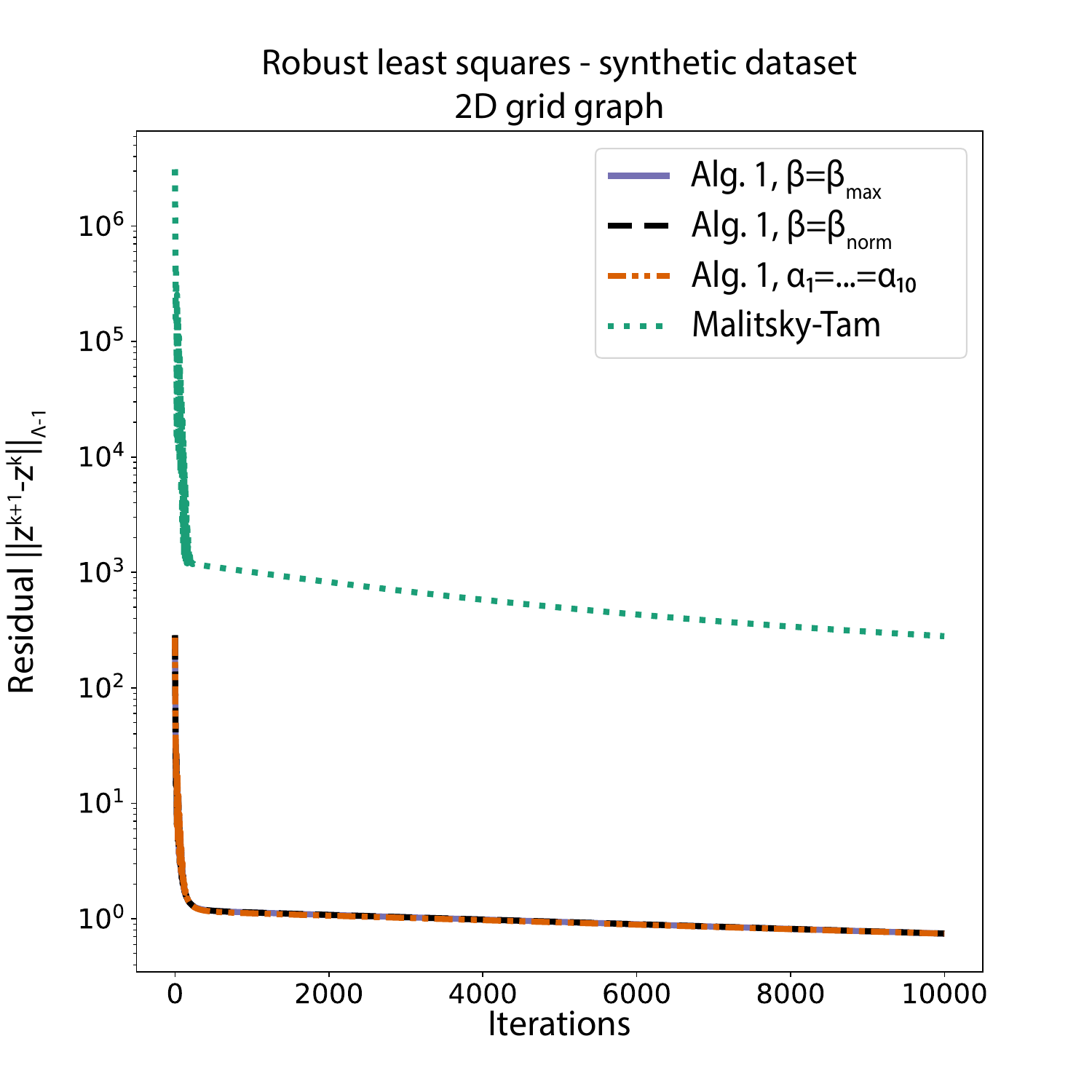}
    \caption{2D grid graph: residuals vs. iterations.}
    \label{fig:rls_synthetic_resid_grid2D}
    \end{subfigure}
    
    \caption{Robust least squares results using the synthetic dataset and a barbell (top row) and 2D grid (bottom row) communication graphs. Step sizes are given at the start of Section \ref{sec:numerics_part1}.}
    \label{supp:fig:rls_synthetic}
\end{figure}

\begin{figure}[ht!]
    \begin{subfigure}{0.45\textwidth}
    \centering
    \includegraphics[width=\textwidth]{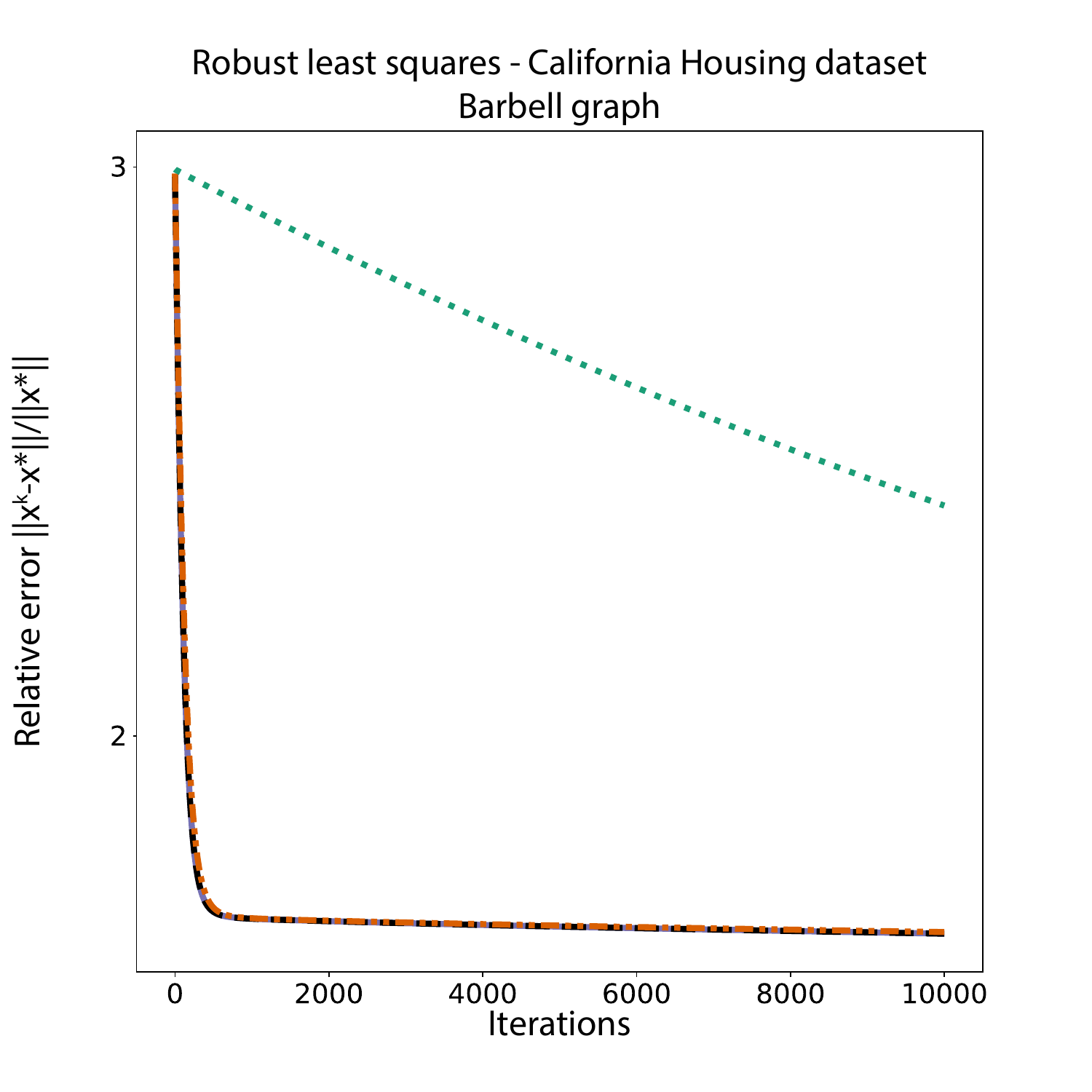}
    \caption{Barbell graph: relative error vs. iterations.}
    \label{fig:rls_california_relerror_barbell}
    \end{subfigure}
    \hfill
    \begin{subfigure}{0.45\textwidth}
    \centering
    \includegraphics[width=\textwidth]{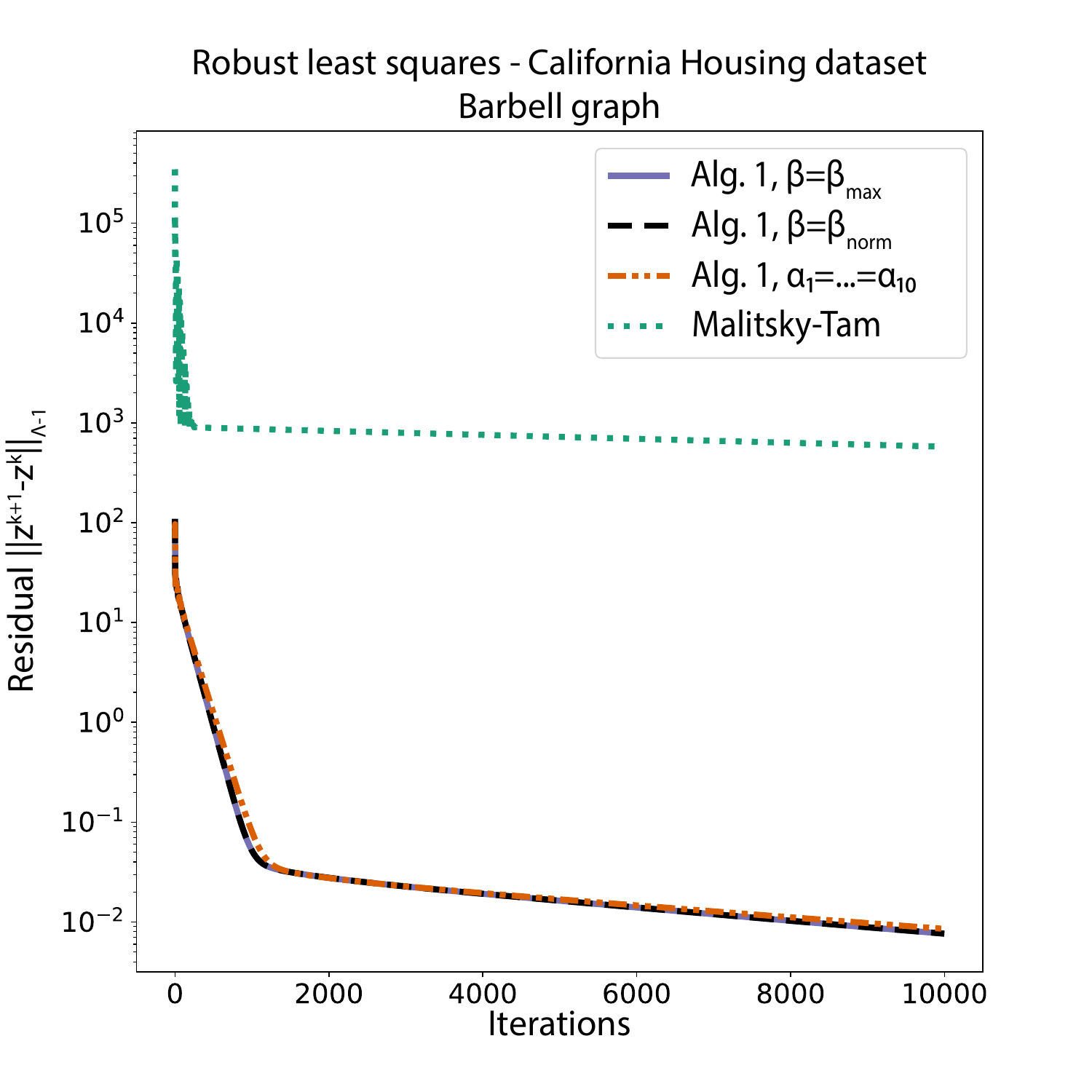}
    \caption{Barbell graph: residuals vs. iterations.}
    \label{fig:rls_california_resid_barbell}
    \end{subfigure}

    \begin{subfigure}{0.45\textwidth}
    \centering
    \includegraphics[width=\textwidth]{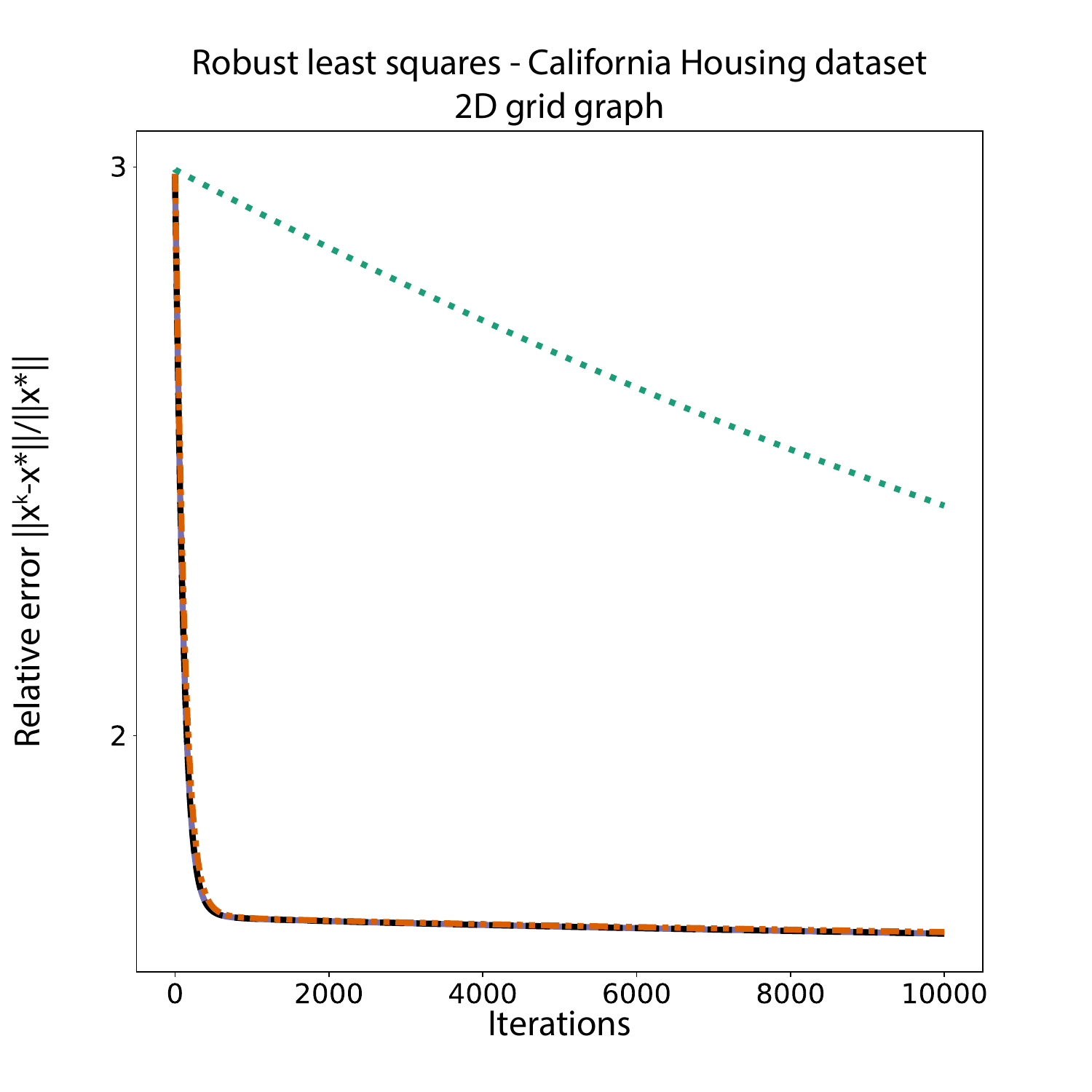}
    \caption{2D grid graph: relative error vs. iterations.}
    \label{fig:rls_california_relerror_grid2D}
    \end{subfigure}
    \hfill
    \begin{subfigure}{0.45\textwidth}
    \centering
    \includegraphics[width=\textwidth]{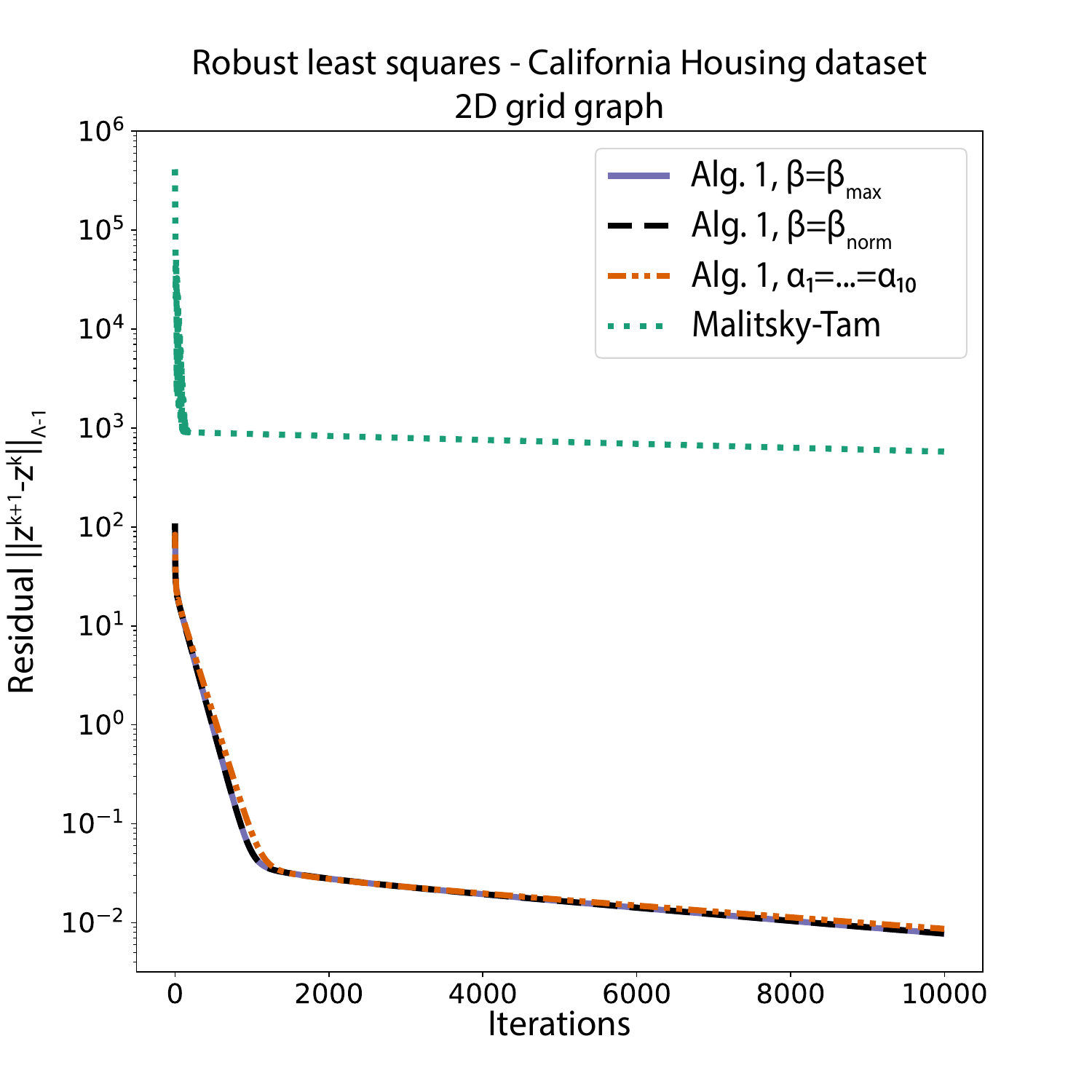}
    \caption{2D grid graph: residuals vs. iterations.}
    \label{fig:rls_california_resid_grid2D}
    \end{subfigure}
    
    \caption{Robust least squares results using the California Housing dataset and a barbell (top row) and 2D grid (bottom row) communication graphs. Step sizes are given at the start of Section \ref{sec:numerics_part1}.}
    \label{supp:fig:rls_california}
\end{figure}

\begin{figure}[ht!]
    \begin{subfigure}{0.45\textwidth}
    \centering
    \includegraphics[width=\textwidth]{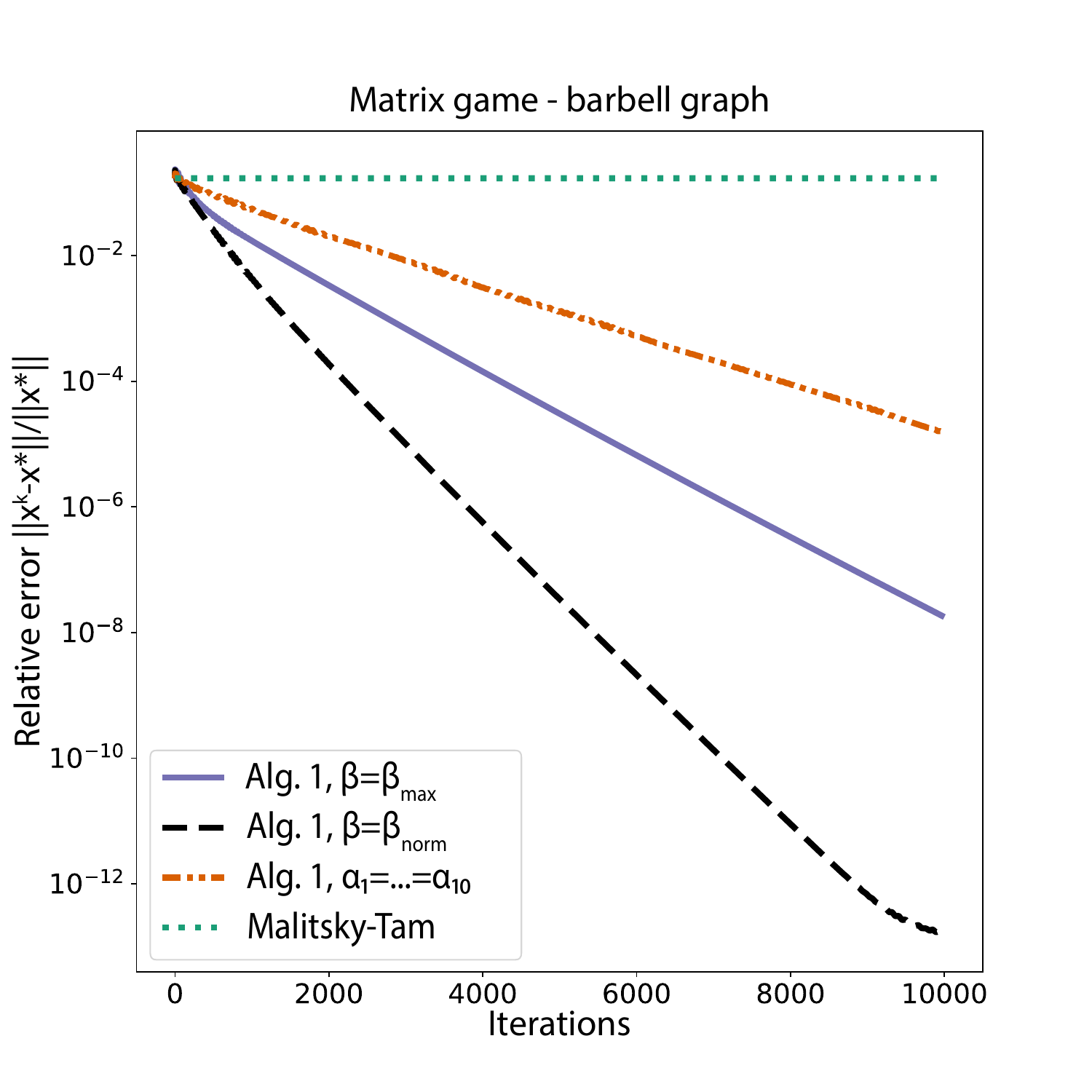}
    \caption{Barbell graph: relative error vs. iterations.}
    \label{fig:matrix_relerror_barbell}
    \end{subfigure}
    \hfill
    \begin{subfigure}{0.45\textwidth}
    \centering
    \includegraphics[width=\textwidth]{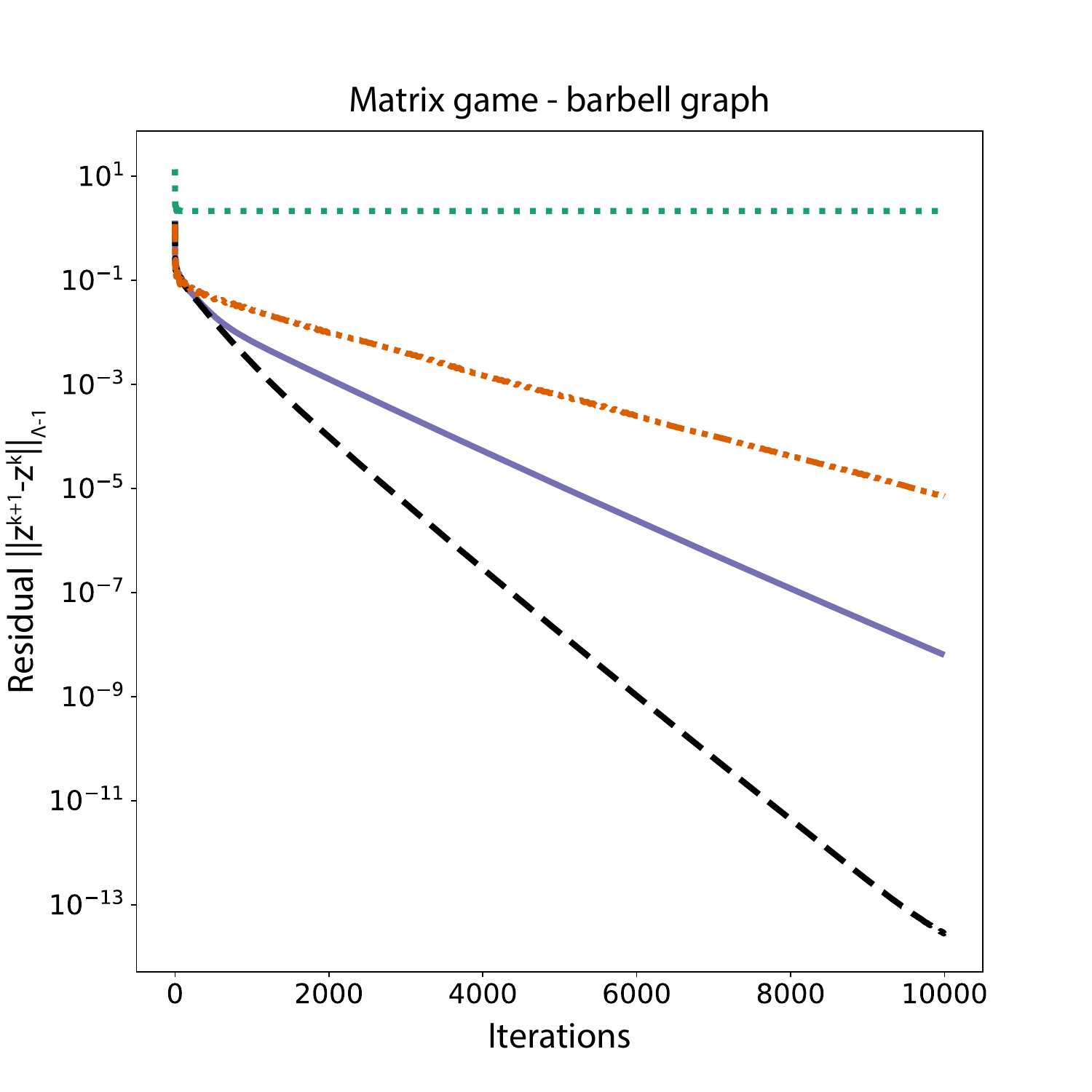}
    \caption{Barbell graph: residuals vs. iterations.}
    \label{fig:matrix_resid_barbell}
    \end{subfigure}

    \begin{subfigure}{0.45\textwidth}
    \centering
    \includegraphics[width=\textwidth]{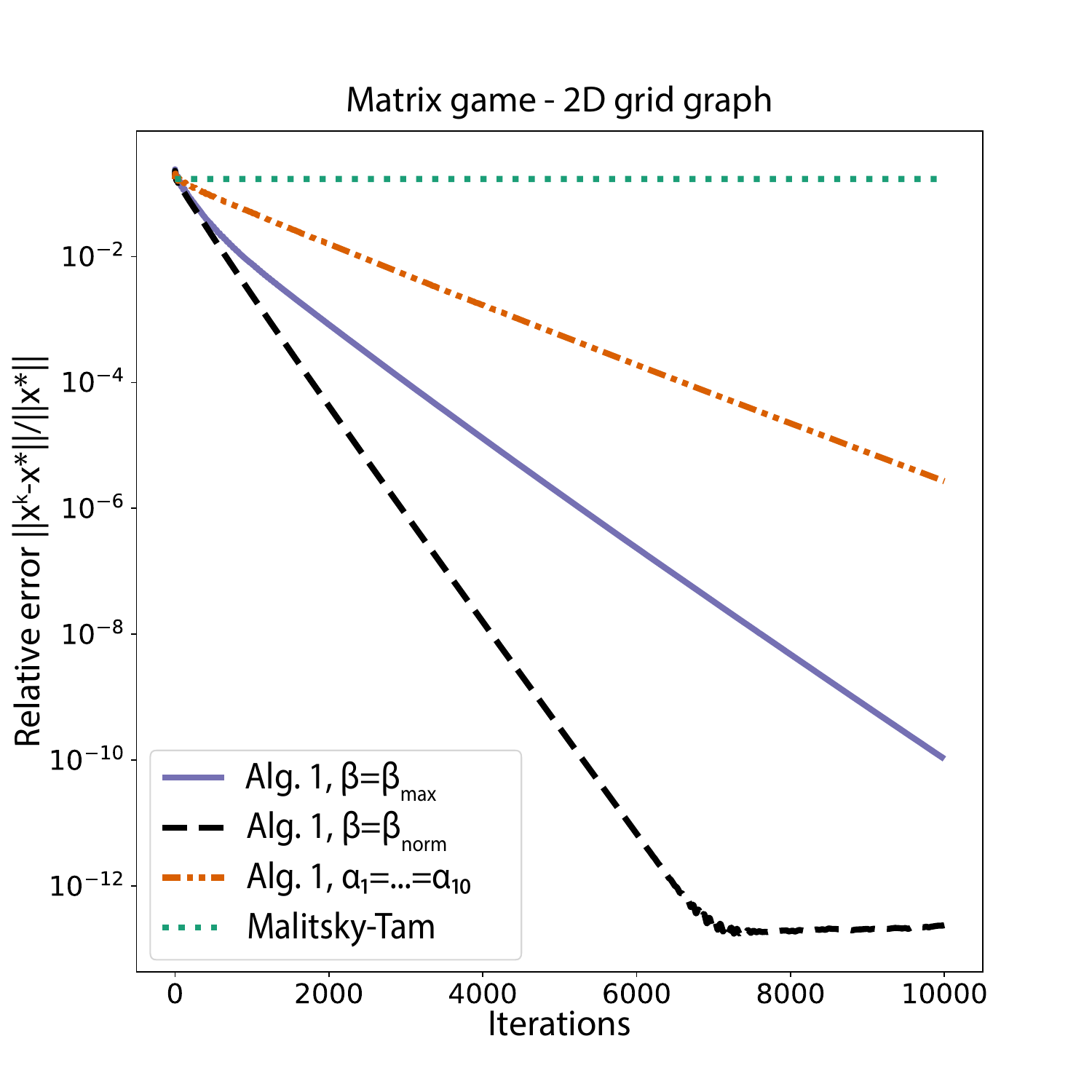}
    \caption{2D grid graph: relative error vs. iterations.}
    \label{fig:matrix_relerror_grid2D}
    \end{subfigure}
    \hfill
    \begin{subfigure}{0.45\textwidth}
    \centering
    \includegraphics[width=\textwidth]{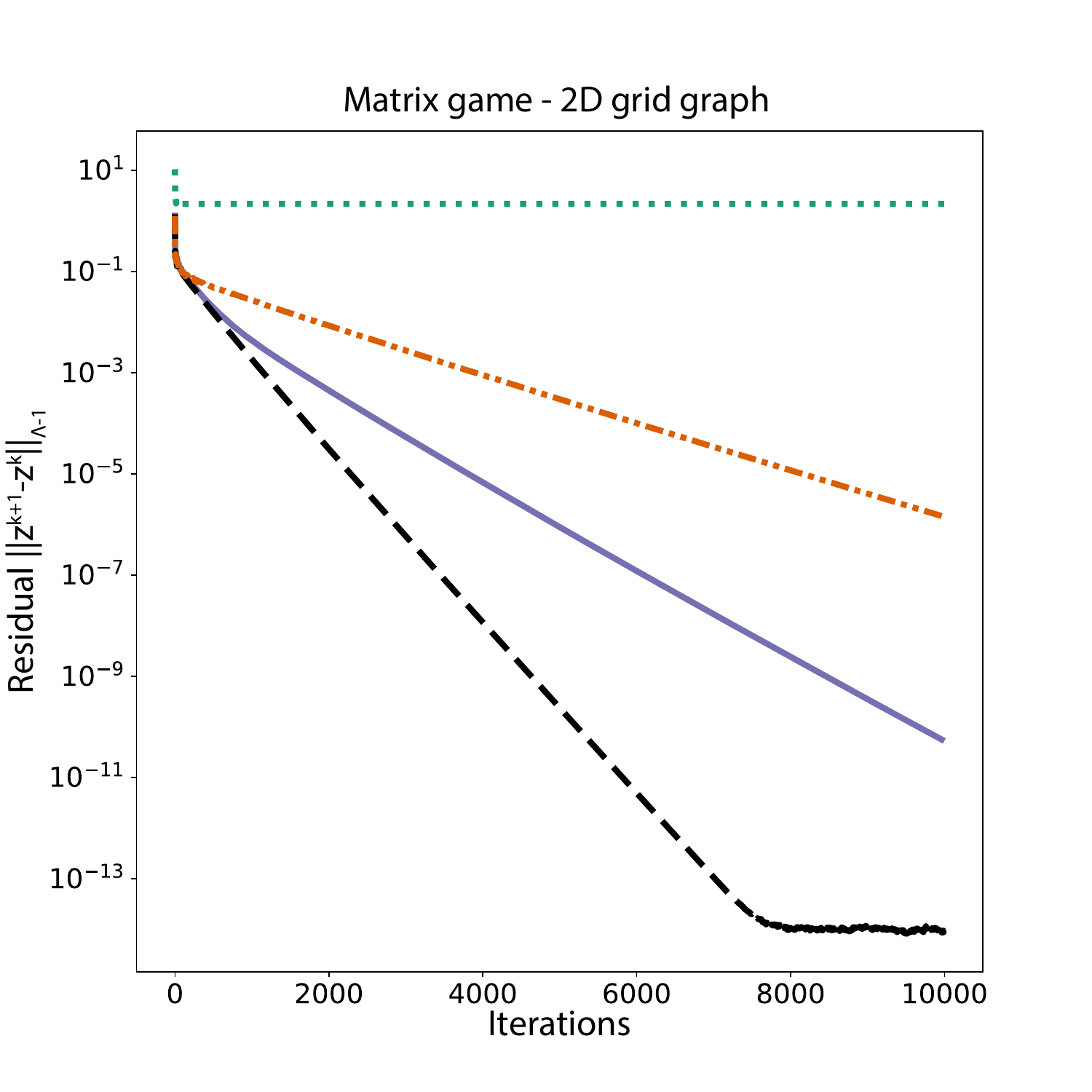}
    \caption{2D grid graph: residuals vs. iterations.}
    \label{fig:matrix_resid_grid2D}
    \end{subfigure}
    
    \caption{Matrix game results using a barbell (top row) and 2D grid (bottom row) communication graphs, averaged over five realisations of $M_1,\dots,M_N$. Step sizes are given at the start of Section \ref{sec:numerics_part1}.}
    \label{supp:fig:matrix}
\end{figure}

\begin{table}[!hb]
    \centering
    \begin{tabular}{c|rrrr|rrrr|rrrr}
     & \multicolumn{4}{c}{Alg.~\ref{alg:main_algorithm}} & \multicolumn{4}{c}{Alg.~\ref{alg:main_algorithm_boosted}} & \multicolumn{4}{c}{ \citep[Alg.~1]{paper:malitsky_tam_minmax}} \\
    \hline
    $N$ & \multicolumn{2}{c}{Residual} & \multicolumn{2}{c|}{Time (s)} & \multicolumn{2}{c}{Residual} & \multicolumn{2}{c|}{Time (s)} & \multicolumn{2}{c}{Residual} & \multicolumn{2}{c}{Time (s)} \\
    \hline
    20 &
    2.08 & (2.08) & 250.09 & (251.38) &
    \textbf{2.07} & (2.07) & 294.58 & (299.15) &
    14.25 & (14.28) & \textbf{198.43} & (199.71) \\
    40 & 
    4.33 & (4.33) & 511.30 & (531.91) &
    \textbf{4.29} & (4.29) & 621.28 & (630.48) &
    42.09 & (42.18) & \textbf{419.35} & (442.24) \\
    60 &
    7.11 & (7.11) & 781.39 & (785.44) &
    \textbf{7.03} & (7.03) & 1008.22 & (1016.38) &
    221.92 & (222.01) & \textbf{634.25} & (639.86) \\
    80 &
    11.04 & (11.04) & 1074.24 & (1081.22) &
    \textbf{10.79} & (10.79) & 1492.4 & (1500.59) &
    349.04 & (349.11) & \textbf{885.11} & (894.94) \\
    100 &
    \textbf{10.16} & (10.16) & 1387.55 & (1402.71) &
    10.30 & (10.30) & 2054.31 & (2064.07) &
    409.69 & (409.73) & \textbf{1170.32} & (1172.39)
    \end{tabular}
    \vspace{5mm}
    
    \begin{tabular}{c|rrrr|rrrr|rrrr}
     & \multicolumn{4}{c}{Alg.~\ref{alg:main_algorithm}} & \multicolumn{4}{c}{Alg.~\ref{alg:main_algorithm_boosted}} & \multicolumn{4}{c}{ \citep[Alg.~1]{paper:malitsky_tam_minmax}} \\
    \hline
    $N$ & \multicolumn{2}{c}{Residual} & \multicolumn{2}{c|}{Time (s)} & \multicolumn{2}{c}{Residual} & \multicolumn{2}{c|}{Time (s)} & \multicolumn{2}{c}{Residual} & \multicolumn{2}{c}{Time (s)} \\
    \hline
    20 &
    2.07 & (2.07) & 249.79 & (250.52) &
    \textbf{2.06} & (2.06) & 288.31 & (289.12) &
    12.41 & (12.48) & \textbf{196.46} & (197.05) \\
    40 & 
    3.82 & (3.82) & 504.26 & (506.97) &
    \textbf{3.78} & (3.78) & 599.75 & (603.02) &
    31.03 & (31.06) & \textbf{405.25} & (419.54) \\
    60 &
    5.87 & (5.87) & 780.35 & (787.57) &
    \textbf{5.82} & (5.82) & 951.92 & (953.46) &
    51.75 & (51.78) & \textbf{619.96} & (622.67) \\
    80 &
    8.56 & (8.56) & 1084.38 & (1105.01) &
    \textbf{8.48} & (8.48) & 1355.41 & (1371.66) &
    73.70 & (73.72) & \textbf{871.82} & (883.42) \\
    100 &
    12.19 & (12.19) & 1391.27 & (1400.70) &
    \textbf{12.16} & (12.16) & 1791.00 & (1804.76) &
    100.32 & (100.34) & \textbf{1154.46} & (1157.50) \\
    \end{tabular}
    \caption{VPP results using a barbell (top) and 2D grid (bottom) graph. Results are the average (and worst-case) over five repetitions, rounded to 2 decimal places. Bolded results indicate best performance amongst algorithms for a given value of $N$.}
    \label{tab:placeholder}
\end{table}

\end{document}